\documentclass[preprint, 12pt, sort&compress]{elsarticle}

\usepackage{latexsym}
\usepackage{amsfonts}
\usepackage{graphicx,amssymb,natbib}
\usepackage{amsmath}
\usepackage{amssymb}
\usepackage[mathscr]{eucal}
\usepackage{enumerate}
\usepackage{amsthm}

\usepackage{bm}

\usepackage{color, soul}

\begin{document}

\newtheorem{tm}{Theorem}[section]
\newtheorem{pp}[tm]{Proposition}
\newtheorem{lm}[tm]{Lemma}
\newtheorem{df}[tm]{Definition}
\newtheorem{tl}[tm]{Corollary}
\newtheorem{re}[tm]{Remark}
\newtheorem{eap}[tm]{Example}

\newcommand{\pof}{\noindent {\bf Proof} }
\newcommand{\ep}{$\quad \Box$}

\newcommand{\al}{\alpha}
\newcommand{\be}{\beta}
\newcommand{\var}{\varepsilon}
\newcommand{\la}{\lambda}
\newcommand{\de}{\delta}
\newcommand{\str}{\stackrel}

\renewcommand{\proofname}{\bf Proof}

\allowdisplaybreaks

\begin{frontmatter}

\title{Relations of endograph metric and $\Gamma$-convergence on fuzzy sets
\tnoteref{usc}
 }
\tnotetext[usc]{Project supported by
 Natural Science Foundation of Fujian Province of China (No. 2020J01706)}
\author{Huan Huang}
 \ead{hhuangjy@126.com }
\address{Department of Mathematics, Jimei
University, Xiamen 361021, China}

\date{}

\begin{abstract}

This paper discusses the compatibility of
 the endograph metric and the $\Gamma$-convergence
on fuzzy sets in $\mathbb{R}^m$.
We significantly improve
the corresponding conclusion given in our previous paper
[H. Huang, Characterizations of endograph metric and $\Gamma$-convergence on fuzzy sets, Fuzzy Sets and Systems 350 (2018),  55-84] by relaxing its prerequisite.
Compared with the previous result,
the conclusion in this paper
shows that
not only
 the condition of the compactness of the positive $\al$-cuts
of the convergent fuzzy set sequence
can be deleted,
but also
 the condition of the connectedness
of
  all the positive $\al$-cuts
of the convergent fuzzy set sequence
can be replaced by a much weaker condition introduced in this paper,
which is called the connectedness condition.
At last, we discuss the properties
of the fuzzy set sequences with the connectedness condition
as a supplement.

\end{abstract}

\begin{keyword}
  Endograph metric; $\Gamma$-convergence; compatibility; Hausdorff metric
\end{keyword}

\end{frontmatter}

\date{}

\section{Introduction}

Fuzzy theory and applications attracts much attention \cite{da, du, wu, can, kupka, garcia, wa, gong}.
Fuzzy set is a fundamental tool to investigate fuzzy phenomenon.
The endograph metric $H_{\rm end}$ on fuzzy sets
 was introduced by
Kloeden \cite{kloeden2}.
The $\Gamma$-convergence on fuzzy sets was
introduced by Rojas-Medar and Rom\'{a}n-Flores \cite{rojas}.
In recent years, the endograph metric $H_{\rm end}$
and
the $\Gamma$-convergence have received deserving attention
\cite{huang7, can, kloeden, kloeden2, kupka, rojas, huang}.
In general, the $\Gamma$-convergence is weaker than the Hausdorff metric convergence.

The work of Wang and Wei \cite{wan} indicated that the $\Gamma$-convergence on upper semi-continuous
fuzzy sets in $\mathbb{R}^m$, the $m$-dimensional Euclidean space, is metrizable.
In \cite{huang}, we found
that
the endograph metric $H_{\rm end}$ and the $\Gamma$-convergence
 is compatible on upper semi-continuous fuzzy sets in $\mathbb{R}^m$
under certain prerequisite.
In more detail, it is shown
that
the endograph metric $H_{\rm end}$ and the $\Gamma$-convergence
 is compatible under
the condition that the positive $\al$-cuts
of the convergent fuzzy set sequence is compact and connected,
and
the positive $\al$-cuts
of limit fuzzy set is compact.

In this paper, we improve the conclusion in \cite{huang} by relaxing its prerequisite.

First, we give a conclusion which says that, in normal fuzzy set case, the conclusion in \cite{huang}
is still true
with the condition of the compactness of the positive $\al$-cuts
of the convergent fuzzy set sequence being deleted.

More importantly, adopting a new proof idea, we
further give a more general conclusion on compatibility of the endograph metric $H_{\rm end}$ and the $\Gamma$-convergence.
 This conclusion
greatly improves the result obtained in \cite{huang}.

This conclusion applies to the general fuzzy sets
which are allowed to not satisfy the normality.
The first conclusion in normal fuzzy set case is an immediate corollary of this conclusion.

This conclusion
shows that
not only
 the condition of the compactness of the positive $\al$-cuts
of the convergent sequence
is not needed,
but also
 the condition of the connectedness
of
  all the positive $\al$-cuts
of the convergent sequence
can be replaced by a much weaker condition.
We call this condition the connectedness condition.

We point out that there exist a large variety
of convergent fuzzy set sequences and their limit fuzzy sets in $\mathbb{R}^m$,
which
 satisfy
the prerequisite of
the above general conclusion, but not satisfy the prerequisite
of the corresponding conclusion in \cite{huang}.
Examples are given to illustrated this fact.
This fact indicates that the conclusion using the connectedness condition in this paper
significantly improves the corresponding result in \cite{huang}.

At last, we conduct some supplementary discussions on the fuzzy set sequences with the connectedness condition.

The remainder of this paper is organized as follows.
In Section 2, we recall and give some basic notions and fundamental results related to fuzzy sets
and
the endograph metric and the $\Gamma$-convergence on them.
In Section 3, we give representation theorems for various kinds of fuzzy sets which are useful in this paper.
In Section \ref{nce},
we give a conclusion on the compatibility of
the endograph metric $H_{\rm end}$ and the $\Gamma$-convergence,
which is
an improvement of the corresponding conclusion in \cite{huang} in a special case.
In Section \ref{nceu}, we
give a general conclusion on the compatibility of the endograph metric $H_{\rm end}$ and the $\Gamma$-convergence.
 This conclusion
greatly improves both the corresponding result in \cite{huang}
and the result in Section \ref{nce}.
In Section 6, we conduct some supplementary discussions
on
the fuzzy set sequences with the connectedness condition.
By last, we draw conclusion
in Section 7.

\section{Fuzzy sets and endograph metric and $\Gamma$-convergence on them}

In this section, we recall and give some basic notions and fundamental results related to fuzzy sets
and
the endograph metric and the $\Gamma$-convergence on them.
Readers
can refer to \cite{wu, da, du, rojas} for related contents.

Let $\mathbb{N}$ denote the set of natural numbers.
Let $\mathbb{R} $ denote the set of real numbers.
Let $\mathbb{R}^m$, $m>1$, denote the
set $\{\langle x_1, \ldots, x_m \rangle: x_i\in \mathbb{R}, \ i=1,\ldots,m \}$.
In the sequel, $\mathbb{R} $ is also written as $\mathbb{R}^1$.

Throughout this paper, we suppose that $X$ \emph{is a nonempty set and} $d$ \emph{is the metric on} $X$.
For simplicity,
we also use $X$
to denote the metric space $(X, d)$.

The metric $\overline{d}$ on $X \times [0,1]$ is defined
as follows: for $(x,\al), (y, \beta) \in X \times [0,1]$,
$$  \overline{d } ((x,\al), (y, \beta)) = d(x,y) + |\al-\beta| .$$

Throughout this paper, we suppose that \emph{the metric on} $X\times[0,1]$ \emph{is} $\overline{d}$.
For simplicity,
we also use $X \times [0,1]$
to denote the metric space $(X \times [0,1], \overline{d})$.

Let $m\in \mathbb{N}$.
For simplicity,
 $\mathbb{R}^m$ is also used to denote the
$m$-dimensional Euclidean space;
$d_m$ is used to denote the Euclidean metric on $\mathbb{R}^m$;
$\mathbb{R}^m \times [0,1]$ is also used to denote
the metric space $(\mathbb{R}^m \times [0,1], \overline{d_m})$.

A fuzzy set $u$ in $X$ can be seen as a function $u:X \to [0,1]$.
A
subset $S$ of $X$ can be seen as a fuzzy set in $X$. If there is no confusion,
 the fuzzy set corresponding to $S$ is often denoted by $\chi_{S}$; that is,
\[ \chi_{S} (x) = \left\{
                    \begin{array}{ll}
                      1, & x\in S, \\
                      0, & x\in X \setminus S.
                    \end{array}
                  \right.
\]
For simplicity,
for
$x\in X$, we will use $\widehat{x}$ to denote the fuzzy set  $\chi_{\{x\}}$ in $X$.
In this paper, if we want to emphasize a specific metric space $X$, we will write the fuzzy set corresponding to $S$ in $X$ as
$S_{F(X)}$, and the fuzzy set corresponding to $\{x\}$ in $X$ as $\widehat{x}_{F(X)}$.

The symbol $F(X)$ is used
to
denote the set of
all fuzzy sets in $X$.
For
$u\in F(X)$ and $\al\in [0,1]$, let $\{u>\al \} $ denote the set $\{x\in X: u(x)>\al \}$, and let $[u]_{\al}$ denote the \emph{$\al$-cut} of
$u$, i.e.
\[
[u]_{\al}=\begin{cases}
\{x\in X : u(x)\geq \al \}, & \ \al\in(0,1],
\\
{\rm supp}\, u=\overline{    \{ u > 0 \}    }, & \ \al=0,
\end{cases}
\]
where $\overline{S}$
denotes
the topological closure of $S$ in $(X,d)$.

The symbol $K(X)$ and
 $C(X)$ are used
to
 denote the set of all nonempty compact subsets of $X$ and the set of all nonempty closed subsets of $X$, respectively.

Let
$F_{USC}(X)$
denote
the set of all upper semi-continuous fuzzy sets $u:X \to [0,1]$,
i.e.,
$$F_{USC}(X) :=\{ u\in F(X) : [u]_\al \in  C(X) \cup \{\emptyset\}  \  \mbox{for all} \   \al \in [0,1]   \}.  $$

Define
\begin{gather*}
F_{USCB}(X):=\{ u\in  F_{USC}(X): [u]_0 \in K(X)\cup\{\emptyset\} \},
\\
F_{USCG}(X):=\{ u\in  F_{USC}(X): [u]_\al \in K(X)\cup\{\emptyset\} \ \mbox{for all} \   \al\in (0,1] \}.
 \end{gather*}
Clearly,
 $$F_{USCB}(X) \subseteq  F_{USCG}(X)   \subseteq  F_{USC}(X).$$

Define
\begin{gather*}
F_{CON}(X) := \{ u\in F(X): \mbox{for all } \al\in (0,1], \ [u]_\al \mbox{ is connected in } X   \},
\\
F_{USCCON} (X) :=  F_{USC}(X) \cap F_{CON}(X),\\
F_{USCGCON}(X):= F_{USCG}(X) \cap F_{CON}(X).
\end{gather*}

Let $u\in F_{CON}(X)$. Then $[u]_0 = \overline{\cup_{\al>0} [u]_\al}$ is connected in $X$.
The proof is as follows.

 If $u=\chi_{\emptyset}$, then $[u]_0 = \emptyset$ is connected in $X$.
 If $u \not= \chi_{\emptyset}$,
 then there is an $\al\in (0,1]$ such that $[u]_\al \not= \emptyset$.
Note that $[u]_\beta \supseteq [u]_\al$ when $\beta\in [0,\al]$.
Hence $\cup_{0<\beta<\al} [u]_\beta$ is connected, and thus
 $[u]_0 = \overline{\cup_{0<\beta<\al} [u]_\beta}$ is connected.

So
 $$
F_{CON}(X) = \{ u\in F(X): \mbox{for all } \al\in [0,1], \ [u]_\al \mbox{ is connected in } X   \}.
$$

Let
$F^1_{USC}(X)$
denote
the set of all normal and upper semi-continuous fuzzy sets $u:X \to [0,1]$,
i.e.,
$$F^1_{USC}(X) :=\{ u\in F(X) : [u]_\al \in  C(X)  \  \mbox{for all} \   \al \in [0,1]   \}.  $$

We introduce some subclasses of $F^1_{USC}(X)$, which will be discussed in this paper.
Define
\begin{gather*}
F^1_{USCB}(X):= F^1_{USC}(X) \cap F_{USCB}(X) ,
\\
F^1_{USCG}(X):= F^1_{USC}(X) \cap F_{USCG}(X),
\\
  F^1_{USCCON} (X) := F^1_{USC}(X) \cap F_{CON}(X), \\
  F^1_{USCGCON} (X) := F^1_{USCG} (X) \cap F_{CON} (X).
 \end{gather*}
Clearly,
\begin{gather*}
F^1_{USCB}(X) \subseteq  F^1_{USCG}(X)   \subseteq  F^1_{USC}(X),\\
 F^1_{USCGCON} (X) \subseteq F^1_{USCCON} (X).
\end{gather*}

Let
$(X,d)$ be a metric space.
 We
use $\bm{H}$ to denote the \emph{\textbf{Hausdorff distance}}
on
 $C(X)$ induced by $d$, i.e.,
\begin{equation} \label{hau}
\bm{H(U,V)}  =   \max\{H^{*}(U,V),\ H^{*}(V,U)\}
\end{equation}
for arbitrary $U,V\in C(X)$,
where
  $$
H^{*}(U,V)=\sup\limits_{u\in U}\,d\, (u,V) =\sup\limits_{u\in U}\inf\limits_{v\in
V}d\, (u,v).
$$

If
there is no confusion, we also use $H$ to denote the Hausdorff distance on $C(X\times [0,1])$ induced by $\overline{d}$.

\begin{re}
{\rm

$\rho$ is said to be a \emph{metric} on $Y$ if $\rho$ is a function from $Y\times Y$ into $\mathbb{R}$
satisfying
positivity, symmetry and triangle inequality. At this time, $(Y, \rho)$ is said to be a metric space.

  $\rho$ is said to be an \emph{extended metric} on $Y$ if $\rho$ is a function from $Y\times Y$ into $\mathbb{R} \cup \{+\infty\} $
satisfying
positivity, symmetry and triangle inequality. At this time, $(Y, \rho)$ is said to be an extended metric space.

We can see that for arbitrary metric space $(X,d)$, the Hausdorff distance $H$ on $K(X)$ induced by $d$ is a metric.
So
the Hausdorff distance $H$ on $K(X\times [0,1])$ induced by $\overline{d}$ on $X\times [0,1]$
is a metric. In these cases, we call the Hausdorff distance the Hausdorff metric.

The Hausdorff distance $H$ on $C(X)$ induced by $d$ on $X$
is an extended metric, but probably not a metric,
because
$H(A,B)$ could be equal to $+\infty$ for certain metric space $X$ and $A, B \in C(X)$.
Clearly, if $H$ on $C(X)$ induced by $d$
is not a metric, then $H$ on $C(X\times [0,1])$ induced by $\overline{d}$
is also not a metric.
So
the Hausdorff distance $H$ on $C(X\times [0,1])$ induced by $\overline{d}$ on $X\times [0,1]$
 is an extended metric but probably not a metric.
In the cases that the Hausdorff distance $H$ is an extended metric, we call the Hausdorff distance the Hausdorff extended metric.

We can see that $H$ on $C(\mathbb{R}^m)$ is an extended metric but not a metric,
and then the same is $H$ on $C(\mathbb{R}^m\times [0,1])$.

In this paper, for simplicity,
 we refer to both the Hausdorff extended metric and the Hausdorff metric as the Hausdorff metric.

}
\end{re}

For
$u\in F(X)$,
define
\begin{gather*}
{\rm end}\, u:= \{ (x, t)\in  X \times [0,1]: u(x) \geq t\},
\\
{\rm send}\, u:= \{ (x, t)\in  X \times [0,1]: u(x) \geq t\} \cap  ([u]_0 \times [0,1]).
\end{gather*}
 $
{\rm end}\, u$ and ${\rm send}\, u$
 are called the endograph and the sendograph of $u$, respectively.

Let $u \in  F(X)$. The following properties (\romannumeral1)-(\romannumeral3)
are
equivalent:
\\
(\romannumeral1) \ $u\in F_{USC} (X)$;
\\
(\romannumeral2) ${\rm end}\, u$ is closed in $(X\times [0,1],   \overline{d})$;
\\
(\romannumeral3) ${\rm send}\, u$ is closed in $(X\times [0,1],  \overline{d})$.

(\romannumeral1)$\Rightarrow$(\romannumeral2). Assume that (\romannumeral1) is true.
To show that (\romannumeral2) is true, let $\{(x_n, \al_n)\}$ be a sequence
in ${\rm end}\, u$ which converges to $(x,\al)$ in $X\times [0,1]$, we only need to show that $(x,\al) \in {\rm end}\, u$.
Since
 $u$ is upper semi-continuous, then
$u(x) \geq \limsup_{n\to\infty} u(x_n) \geq \lim_{n\to\infty} \al_n = \al$. Thus $(x,\al) \in {\rm end}\, u$.
So (\romannumeral2) is true.

(\romannumeral2)$\Rightarrow$(\romannumeral3).
Assume that (\romannumeral2) is true.
Note that $[u]_0 \times [0,1]$ is closed in $X\times [0,1]$,
then
${\rm send}\, u =  {\rm end}\, u  \cap ([u]_0 \times [0,1])$ is closed in $X\times [0,1]$.
So
(\romannumeral3) is true.

(\romannumeral3)$\Rightarrow$(\romannumeral1).
 Assume that (\romannumeral3) is true.
To show that (\romannumeral1) is true,
let $\al\in [0,1]$ and suppose that $\{x_n\}$ is
 a sequence in $[u]_\al$ which converges to $x$ in $X$,
 we only need to show that $x\in [u]_\al$.
Note that
$\{(x_n, \al)\}$ converges to $(x,\al)$ in $X\times[0,1]$,
and that
the sequence $\{(x_n,\al)\}$ is in ${\rm send}\, u$.
Hence from the closedness of ${\rm send}\, u$, it follows that $(x,\al) \in {\rm send}\, u$,
which means that $x\in [u]_\al$.
So
(\romannumeral1) is true.

Let $u\in F(X)$. Clearly $X\times\{0\} \subseteq {\rm end}\, u$. So
${\rm end}\, u \not=\emptyset$.
We can see that ${\rm send}\, u = \emptyset$ if and only if $u=\emptyset_{F(X)}$.

From the above discussions, we know that $u\in F_{USC}(X)$ if and only if
${\rm end}\, u\in C(X \times [0,1])$.

The endograph metric
$H_{\rm end}$
 on $F_{USC}(X)$ can be defined as usual.
For $u,v \in F_{USC}(X)$,
\begin{gather*}
\bm{  H_{\rm end}(u,v)    }: =  H({\rm end}\, u,  {\rm end}\, v ),
  \end{gather*}
where
 $H$
is
the Hausdorff
metric on $C(X \times [0,1])$ induced by $\overline{d}$ on $X \times [0,1]$.

Rojas-Medar and Rom\'{a}n-Flores \cite{rojas} introduced the Kuratowski convergence
of a sequence of sets in a metric space.

Let $(X,d)$ be a metric space.
Let $C$ be a set in $X$ and
$\{C_n\}$ a sequence of sets in $X$.
 $\{C_n\}$ is said to \emph{\textbf{Kuratowski converge}} to
$C$ according to $(X,d)$, if
$$
C
=
\liminf_{n\rightarrow \infty} C_{n}
=
\limsup_{n\rightarrow \infty} C_{n},
$$
where
\begin{gather*}
\liminf_{n\rightarrow \infty} C_{n}
 =
 \{x\in X: \  x=\lim\limits_{n\rightarrow \infty}x_{n},    x_{n}\in C_{n}\},
\\
\limsup_{n\rightarrow \infty} C_{n}
=
\{
 x\in X : \
 x=\lim\limits_{j\rightarrow \infty}x_{n_{j}},x_{n_{j}}\in C_{n_j}
\}
 =
 \bigcap\limits_{n=1}^{\infty}   \overline{   \bigcup\limits_{m\geq n}C_{m}    }.
\end{gather*}
In this case, we'll write
\bm{  $C=\lim^{(K)}_{n\to\infty}C_n    $ } according to $(X,d)$.
If
there is no confusion, we will not emphasize
the metric space
$(X,d)$ and write  $\{C_n\}$ \emph{\textbf{Kuratowski converges}} to
$C$ or \bm{  $C=\lim^{(K)}_{n\to\infty}C_n    $ } for simplicity.

\begin{re} \label{ksc}
{\rm
Definition 3.1.4 in \cite{kle} gives the definitions of
$\liminf C_{n}$, $\limsup C_{n}$
 and
$\lim C_{n}$
for
 a net of subsets $\{C_n, n\in D\}$ in a topological space.
When $\{C_n, n=1,2,\ldots\}$ is
 a sequence of subsets of a metric space,
$\liminf C_{n}$, $\limsup C_{n}$
 and
$\lim C_{n}$
according to Definition 3.1.4 in \cite{kle}
are
$\liminf_{n\rightarrow \infty} C_{n}$, $\limsup_{n\rightarrow \infty} C_{n}$
and $\lim^{(K)}_{n\to\infty}C_n  $
according to
 the above definitions, respectively.

  }
\end{re}

The $\Gamma$-convergence of a sequence
of fuzzy sets
on $F_{USC} (X)$ can be defined
 as usual.

Let $u$, $u_n$, $n=1,2,\ldots$, be fuzzy sets in $F_{USC} (X)$.
   $\{u_n\}$ is said to \bm{$\Gamma$}-\emph{\textbf{converge}}
  to
  $u$, denoted by \bm{$u = \lim_{n\to \infty}^{(\Gamma)}  u_n$},
  if
  ${\rm end}\, u= \lim_{n\to \infty}^{(K)}  {\rm end}\, u_n$
according to
$(X \times [0,1], \overline{d})$.

In this paper,
for a metric space $(Y,\rho)$ and a subset $S$ in $Y$,
we still use $\rho$ to denote the induced metric on $S$ by $\rho$.

\section{Representation theorems for various kinds of fuzzy sets}

In this section, we give representation theorems for various kinds of fuzzy sets.
These representation theorems
are useful in this paper.

The following representation theorem should be a known conclusion.
In this paper we assume that $\sup\emptyset = 0$.

\begin{tm} \label{rep}
Let $Y$ be a nonempty set.
If $u\in F(Y)$, then for all
$\al\in (0,1]$,
  $[u]_\al = \cap_{\beta<\al} [u]_\beta.$

Conversely,
suppose that
$\{v_\al: \al \in (0,1]\}$
is
a family of sets in $Y$ with $v_\al = \cap_{\beta<\al} v_\beta$ for all
$\al\in (0,1]$.
Define $u\in F(Y)$ by
$$u(x) := \sup\{  \al:  x\in v_\al    \} $$ for each $x\in Y$.
 Then $u$ is the unique fuzzy set in $Y$ satisfying that
 $[u]_\al = v_\al$ for all $\al\in (0,1]$.
\end{tm}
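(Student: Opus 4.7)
The plan is to prove the two halves separately, with the forward direction being a straightforward monotonicity argument and the converse requiring a bit of care to verify both existence and uniqueness using the assumed hypothesis $v_\alpha = \cap_{\beta<\alpha} v_\beta$.

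For the first assertion, I would first observe that $[u]_\beta \supseteq [u]_\alpha$ whenever $\beta < \alpha$ (this holds even at $\beta = 0$ since $[u]_0 = \overline{\{u>0\}}$ contains $\{u \geq \alpha\}$ for $\alpha > 0$), which gives $[u]_\alpha \subseteq \cap_{\beta<\alpha}[u]_\beta$ for free. For the reverse inclusion, take $x \in \cap_{\beta<\alpha}[u]_\beta$. It suffices to restrict attention to $\beta \in (0,\alpha)$, where $[u]_\beta = \{y : u(y) \geq \beta\}$; thus $u(x) \geq \beta$ for every $\beta \in (0,\alpha)$, and taking sup over such $\beta$ gives $u(x) \geq \alpha$, so $x \in [u]_\alpha$.

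For the converse statement, the first step is to verify the monotonicity $v_\gamma \subseteq v_\beta$ whenever $\beta < \gamma$, which is an immediate consequence of the hypothesis $v_\gamma = \cap_{\beta'<\gamma} v_{\beta'}$. Then for fixed $\alpha \in (0,1]$ I would show $[u]_\alpha = v_\alpha$ by double inclusion. If $x \in v_\alpha$, then by definition $u(x) = \sup\{\gamma : x \in v_\gamma\} \geq \alpha$, so $x \in [u]_\alpha$. Conversely, if $x \in [u]_\alpha$, i.e.\ $u(x) \geq \alpha$, then for each $\beta < \alpha$ the supremum $\sup\{\gamma : x \in v_\gamma\}$ exceeds $\beta$, so some $\gamma \in (\beta, 1]$ admits $x \in v_\gamma$; by monotonicity $x \in v_\beta$. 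Hence $x \in \cap_{\beta<\alpha} v_\beta = v_\alpha$, which is exactly where the hypothesis $v_\alpha = \cap_{\beta<\alpha} v_\beta$ is essential.

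Finally, for uniqueness I would invoke the general formula $w(x) = \sup\{\alpha \in (0,1] : x \in [w]_\alpha\}$, valid for any $w \in F(Y)$ under the convention $\sup\emptyset = 0$ (this is immediate: if $w(x) > 0$ the sup equals $w(x)$, and if $w(x) = 0$ the set is empty). Thus any two fuzzy sets with the same family of positive $\alpha$-cuts must agree pointwise, yielding uniqueness.

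The main point to be careful about is the edge behavior: one must handle the $\beta = 0$ case (where $[u]_0$ is defined differently as a closure), the case $u(x) = 0$ in the pointwise reconstruction formula, and the case $\alpha = 1$ in the converse direction. None of these is a real obstacle, but each needs to be dispatched explicitly with the $\sup\emptyset = 0$ convention and the monotonicity of $\{v_\alpha\}$.
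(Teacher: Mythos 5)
Your proposal is correct and follows essentially the same route as the paper: both inclusions $[u]_\alpha \supseteq v_\alpha$ and $[u]_\alpha \subseteq v_\alpha$ via the supremum definition, plus uniqueness from the reconstruction formula $w(x)=\sup\{\alpha : x\in [w]_\alpha\}$ under $\sup\emptyset=0$. The only (harmless) difference is in the inclusion $[u]_\alpha\subseteq v_\alpha$, where the paper extracts a sequence $\beta_n\to$ some $\gamma\ge\alpha$ and uses $\cap_n v_{\beta_n}=v_\gamma$, whereas you show $x\in v_\beta$ for every $\beta<\alpha$ directly from monotonicity and then invoke $v_\alpha=\cap_{\beta<\alpha}v_\beta$ — a slightly more streamlined version of the same idea.
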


\begin{proof}
Let $u\in F(Y)$ and $\al\in (0,1]$. For each $x\in Y$,
$x\in [u]_\al \Leftrightarrow u(x) \geq \al \Leftrightarrow$ for each $\beta<\al$, $u(x) \geq \beta$
$\Leftrightarrow$ for each $\beta<\al$, $x\in [u]_\beta$.
So
 $[u]_\al = \cap_{\beta<\al} [u]_\beta.$

Conversely,
suppose that
$\{v_\al: \al \in (0,1]\}$
is
a family of sets in $Y$ with $v_\al = \cap_{\beta<\al} v_\beta$ for all
$\al\in (0,1]$.
Let $u\in F(Y)$ defined by
$$u(x) := \sup\{  \al:  x\in v_\al    \} $$ for each $x\in Y$.
Firstly, we show
that for each $\al\in (0,1]$,
$[u]_\al = v_\al$.
To do this, let $\al\in (0,1]$. We only need to verify that $[u]_\al \supseteq v_\al$ and $[u]_\al\subseteq v_\al$.

Let $x\in  v_\al $. Then clearly $u(x) \geq \al$, i.e.
$x\in [u]_\al$.
So $[u]_\al \supseteq v_\al$.

Let $x\in [u]_\al$. Then
$\sup\{ \beta:  x\in v_\beta    \} = u(x)\geq \al$.
Hence
there exists a sequence $\{ \beta_n, \ n=1,2,\ldots\}$ such that
$1\geq\beta_n \geq \al-1/n$ and $x\in v_{\beta_n}$.
Set $\gamma= \sup_{n=1}^{+\infty} \beta_n $. Then $1\geq \gamma\geq \alpha$
and thus $x\in \cap_{n=1}^{+\infty} v_{\beta_n} = v_\gamma \subseteq v_\al$.
So $[u]_\al\subseteq v_\al$.

Now we show the uniqueness of $u$. To do this, assume that
 $v$ is a fuzzy set in $Y$ satisfying that
 $[v]_\al = v_\al$ for all $\al\in (0,1]$. Then for each $x\in Y$,
$$
v(x) = \sup\{  \al:  x\in [v]_\al    \}  = \sup\{  \al:  x\in v_\al    \} = u(x).
$$
So $u=v$.

\end{proof}

\begin{re}
  {\rm

We can't find the original reference which gave Theorem \ref{rep}, so we give a proof here.
Theorem \ref{rep} and its proof are essentially the same as the Theorem 7.10 in P27 of chinaXiv:202110.00083v4
and its proof since
the uniqueness of $u$ is obvious.

}
\end{re}

From Theorem \ref{rep}, it follows immediately below representation theorems for
$F_{USC} (X)$, $F^1_{USC} (X)$, $F_{USCG} (X)$, $F_{CON} (X)$, $F_{USCB} (X)$,
and $F^1_{USCB} (X)$.

\begin{pp} \label{fus}\
  Let $(X,d)$ be a metric space.
If $u\in F_{USC} (X)$ (respectively, $u\in F^1_{USC} (X)$, $u\in F_{USCG} (X)$, $u\in F_{CON} (X)$), then
\\
(\romannumeral1) \ $[u]_\al\in C(X) \cup \{\emptyset\}$ (respectively, $[u]_\al\in C(X)$, $[u]_\al\in K(X) \cup \{\emptyset\}$, $[u]_\al$ is connected in $(X,d)$) for all $\al\in (0,1]$, and
\\
(\romannumeral2) \ $[u]_\al=\bigcap_{\beta<\al}[u]_\beta$ for all $\al\in (0,1]$.

Conversely, suppose that the family of sets $\{v_\al:\al\in (0,1]\}$ satisfies
conditions (\romannumeral1) and (\romannumeral2).
Define $u\in F(X)$ by
$u(x) := \sup\{  \al:  x\in v_\al    \} $ for each $x\in X$.
Then $u$ is the unique fuzzy set
in $X$
satisfying that $[u]_{\al}=v_\al$ for each $\al\in (0,1]$.
Moreover, $u\in F_{USC}(X)$ (respectively, $u\in F^1_{USC} (X)$, $u\in F_{USCG}(X)$, $u\in F_{CON}(X)$).

\end{pp}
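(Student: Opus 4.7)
The plan is to reduce everything to Theorem~\ref{rep}: the second clause~(ii) in the hypothesis is exactly the compatibility condition $v_\al=\bigcap_{\beta<\al}v_\beta$ appearing there, so the existence, uniqueness, and cut-identity $[u]_\al=v_\al$ for $\al\in(0,1]$ are free. What remains is to verify that each structural property (closed, nonempty closed, compact, connected) of the $v_\al$'s transfers to the defined $u$, i.e.\ that $u$ actually lies in the claimed subclass.

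For the forward direction I would just unpack definitions. Property~(i) is literally the definition of the relevant class restricted to $\al\in(0,1]$ (with $F_{CON}$ handled by the remark already in the excerpt, whose argument shows that connectedness on $(0,1]$ propagates to $\al=0$). Property~(ii) is the first half of Theorem~\ref{rep}, applied to $u\in F(X)$.

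For the converse I would let $u(x):=\sup\{\al:x\in v_\al\}$, and invoke Theorem~\ref{rep} to conclude that $u$ is the unique fuzzy set in $X$ with $[u]_\al=v_\al$ for every $\al\in(0,1]$. Then I would check class membership case by case. For $F_{USC}(X)$: each $[u]_\al=v_\al$ is closed or empty for $\al\in(0,1]$, and $[u]_0=\overline{\{u>0\}}$ is closed by definition of $0$-cut, so $u\in F_{USC}(X)$. For $F^1_{USC}(X)$: additionally $v_1\in C(X)$ is nonempty, so $[u]_\al\supseteq[u]_1=v_1\neq\emptyset$ for every $\al\in[0,1]$, giving normality plus closedness. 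For $F_{USCG}(X)$: each $v_\al$ ($\al>0$) is compact or empty, matching the defining condition. For $F_{CON}(X)$: each $[u]_\al=v_\al$ is connected for $\al\in(0,1]$, and the paragraph preceding Theorem~\ref{rep} already shows this forces $[u]_0$ to be connected as well.

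The argument is almost entirely bookkeeping once Theorem~\ref{rep} is in hand; there is no genuine obstacle. The only point that requires a moment's thought is the $\al=0$ slot, since it is excluded from the family $\{v_\al:\al\in(0,1]\}$: one must remember that the definition of $[u]_0$ automatically gives closedness, that nonemptiness propagates from $[u]_1$, and that connectedness for $\al=0$ is already established in the excerpt. No further case needs a separate verification.
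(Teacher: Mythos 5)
Your proposal is correct and follows essentially the same route as the paper: both reduce the existence, uniqueness, and cut-identity to Theorem~\ref{rep} and then verify class membership from condition (\romannumeral1), with the paper writing out only the $F_{USC}(X)$ case and declaring the rest similar. Your explicit handling of the $\al=0$ slot (closedness by definition of $[u]_0$, nonemptiness from $v_1$, connectedness from the earlier remark) just fills in the ``similarly'' that the paper leaves to the reader.
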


\begin{proof}
  The proof is routine. We only show the case of $F_{USC} (X)$. The other cases can be verified similarly.

If $x\in F_{USC} (X)$, then clearly (\romannumeral1) is true.
From Theorem \ref{rep},
(\romannumeral2) is true.

Conversely, suppose that the family of sets $\{v_\al:\al\in (0,1]\}$ satisfies
conditions (\romannumeral1) and (\romannumeral2).
Define $u\in F(X)$ by
$u(x) := \sup\{  \al:  x\in v_\al    \} $ for each $x\in X$.
Then by Theorem \ref{rep}, $u$ is the unique fuzzy set
in $X$
satisfying that $[u]_{\al}=v_\al$ for each $\al\in (0,1]$.
Since $\{[u]_\al, \al\in (0,1] \}$ satisfies condition (\romannumeral1),
$u\in F_{USC}(X)$.

\end{proof}

\begin{pp} \label{fuscbchre}\
  Let $(X,d)$ be a metric space.
If $u \in F_{USCB} (X)$ (respectively, $u \in F^1_{USCB} (X)$), then
\\
(\romannumeral1) \ $[u]_\al\in K(X) \cup \{\emptyset\}$ (respectively, $[u]_\al\in K(X)$) for all $\al\in [0,1]$,
\\
(\romannumeral2) \ $[u]_\al=\bigcap_{\beta<\al}[u]_\beta$ for all $\al\in (0,1]$, and
\\
(\romannumeral3) \ $[u]_0=\overline{\bigcup_{\beta>0}[u]_\beta}$.

Conversely, suppose that the family of sets $\{v_\al:\al\in [0,1]\}$ satisfies
conditions (\romannumeral1) through (\romannumeral3).
Define $u\in F(X)$ by
$u(x) := \sup\{  \al:  x\in v_\al    \} $ for each $x\in X$.
Then $u$ is the unique fuzzy set
in $X$
satisfying that $[u]_{\al}=v_\al$ for each $\al\in [0,1]$.
Moreover, $u\in F_{USCB}(X)$
(respectively, $u \in F^1_{USCB} (X)$).

\end{pp}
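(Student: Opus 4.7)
The structure of the proof will mirror that of Proposition \ref{fus}, reducing everything to Theorem \ref{rep} plus a separate handling of the $0$-cut. The reason an extra condition (\romannumeral3) appears compared to Proposition \ref{fus} is that $F_{USCB}(X)$ imposes compactness (hence boundedness) of $[u]_0$, which is not captured by the family of positive cuts alone; condition (\romannumeral3) is exactly the relation that ties $[u]_0$ back to the positive cuts in the correct (support) way.

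For the forward direction, assume $u \in F_{USCB}(X)$. Condition (\romannumeral1) for $\al \in (0,1]$ is immediate from $F_{USCB}(X) \subseteq F_{USCG}(X)$, and for $\al = 0$ it is exactly the defining property of $F_{USCB}(X)$; in the normal case $[u]_1 \neq \emptyset$ forces every $[u]_\al$ to be nonempty. Condition (\romannumeral2) is a direct quotation of Theorem \ref{rep}. Condition (\romannumeral3) follows from the definition of the $0$-cut, since $[u]_0 = \overline{\{u>0\}} = \overline{\bigcup_{\beta>0}\{u\ge\beta\}} = \overline{\bigcup_{\beta>0}[u]_\beta}$.

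For the converse, given $\{v_\al : \al \in [0,1]\}$ satisfying (\romannumeral1)--(\romannumeral3), I apply Theorem \ref{rep} to the restricted family $\{v_\al : \al \in (0,1]\}$ (which satisfies its hypothesis by (\romannumeral2)) to obtain the unique $u \in F(X)$ with $[u]_\al = v_\al$ for every $\al \in (0,1]$, namely $u(x) = \sup\{\al : x \in v_\al\}$. To verify that this same $u$ also satisfies $[u]_0 = v_0$, I compute
\[
[u]_0 = \overline{\{u>0\}} = \overline{\bigcup_{\beta>0}[u]_\beta} = \overline{\bigcup_{\beta>0}v_\beta} = v_0,
\]
where the last equality is exactly (\romannumeral3). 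Uniqueness of $u$ on $[0,1]$ then follows from uniqueness on $(0,1]$ (Theorem \ref{rep}) together with the fact that the $0$-cut is forced by the closure of the positive cuts. Finally, (\romannumeral1) for $\al \in (0,1]$ gives $[u]_\al \in K(X) \cup \{\emptyset\}$, so $u \in F_{USC}(X)$, while (\romannumeral1) at $\al = 0$ gives $[u]_0 \in K(X)\cup\{\emptyset\}$, so $u \in F_{USCB}(X)$; in the normal case all $v_\al$ are nonempty, hence $u \in F^1_{USCB}(X)$.

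The only step requiring genuine attention is the matching $[u]_0 = v_0$: this is where condition (\romannumeral3) is indispensable, because without it the family $\{v_\al : \al \in (0,1]\}$ alone only determines the support via its own closure, and an arbitrary choice of $v_0$ could be inconsistent. Everything else in the argument is routine bookkeeping along the lines of Proposition \ref{fus}, and the normal sub-case is obtained simply by observing that $v_1 \neq \emptyset$ propagates non-emptiness to all lower cuts.
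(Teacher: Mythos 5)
Your proposal is correct and follows essentially the same route as the paper: both directions reduce to Theorem \ref{rep} for the positive cuts, with the $0$-cut handled separately via the identity $[u]_0=\overline{\bigcup_{\beta>0}[u]_\beta}=\overline{\bigcup_{\beta>0}v_\beta}=v_0$ supplied by condition (\romannumeral3), and the membership claims read off from condition (\romannumeral1). Your added remarks on why (\romannumeral3) is indispensable and how normality propagates are consistent with, and slightly more explicit than, the paper's "routine" treatment.
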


\begin{proof}
    The proof is routine. We only show the case of $F_{USCB} (X)$. The case of $F^1_{USCB} (X)$ can be verified similarly.

If $x\in F_{USCB} (X)$, then clearly (\romannumeral1) is true.
By Theorem \ref{rep},
(\romannumeral2) is true.
From the definition of $[u]_0$, (\romannumeral3) is true.

Conversely, suppose that the family of sets $\{v_\al:\al\in [0,1]\}$ satisfies
conditions (\romannumeral1) through (\romannumeral3).
Define $u\in F(X)$ by
$u(x) := \sup\{  \al:  x\in v_\al    \} $ for each $x\in X$.
Then by Theorem \ref{rep}, $u$ is the unique fuzzy set
in $X$
satisfying that $[u]_{\al}=v_\al$ for each $\al\in (0,1]$.
Clearly $[u]_0 = \overline{\bigcup_{\beta>0}[u]_\beta} = \overline{\bigcup_{\beta>0}v_\beta} = v_0$.
Since $\{[u]_\al, \al\in [0,1] \}$ satisfies condition (\romannumeral1), $u\in F_{USCB}(X)$.

\end{proof}

Similarly,
we can obtain the
representation theorems for $F_{USCCON}(X)$, $F_{USCGCON}(X)$, $F^1_{USCCON}(X)$, etc.

Based on these representation theorems, we can define a fuzzy set or a certain type fuzzy set
by giving the family of its $\al$-cuts.
For instance, in Example \ref{ucn}, we define a fuzzy set $u$ by giving the family of its $\al$-cuts.
 By Proposition \ref{fus}, we can obtain that this $u$
belongs to
 $F^1_{USC}(\mathbb{R})$.
In the sequel, as done in Example \ref{ucn},
we will directly point out that what we defined is a fuzzy set or a certain type fuzzy set without saying which  representation theorem is used since it is easy to see.

\section{Compatibility of
 endograph metric $H_{\rm end}$ and $\Gamma$-convergence in a special case} \label{nce}

In this section, using the level characterizations of the endograph metric $H_{\rm end}$ and
the $\Gamma$-convergence given in \cite{huang, huang719},
 we give an improvement of the results in \cite{huang} on compatibility of
the endograph metric $H_{\rm end}$ and the $\Gamma$-convergence in a special case.
It is shown that the prerequisite can be relaxed
by deleting the condition of the compactness of the positive $\al$-cuts
of the convergent sequence.

We suspect that
the following Theorem \ref{hkg}
is an already known conclusion, however we can't find this conclusion in the references that we can obtain.
  It can be proved in a similar fashion to Theorem 4.1 in \cite{huang}.
In this paper, we exclude the case that $C=\emptyset$.

\begin{tm} \cite{huang} \label{hkg}
Suppose that $C$, $C_n$ are sets in $C(X)$, $n=1,2,\ldots$. Then $H(C_n, C) \to 0$ implies that $\lim_{n\to \infty}^{(K)} C_n \, =C$.
\end{tm}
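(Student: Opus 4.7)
The plan is to establish the two inclusions that, together with the trivial inclusion $\liminf_{n\to\infty} C_n \subseteq \limsup_{n\to\infty} C_n$, sandwich both the liminf and limsup against $C$. Namely, I will prove $C \subseteq \liminf_{n\to\infty} C_n$ and $\limsup_{n\to\infty} C_n \subseteq C$ directly from the definitions, using only the bounds $H^{*}(C, C_n) \leq H(C_n,C)$ and $H^{*}(C_n, C) \leq H(C_n, C)$ that come from the definition of the Hausdorff distance in \eqref{hau}.

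For the first inclusion, I would fix $x \in C$. Since $d(x, C_n) = \inf_{y \in C_n} d(x,y) \leq H^{*}(C, C_n) \leq H(C_n, C) \to 0$, for each $n$ I can pick $y_n \in C_n$ with $d(x, y_n) < d(x, C_n) + 1/n$. Then $d(x, y_n) \to 0$, so $y_n \to x$ with $y_n \in C_n$, which by the definition of $\liminf$ gives $x \in \liminf_{n\to\infty} C_n$.

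For the second inclusion, I would take $x \in \limsup_{n\to\infty} C_n$ and, by definition, extract a subsequence $\{n_j\}$ together with points $x_{n_j} \in C_{n_j}$ such that $x_{n_j} \to x$ in $X$. Since $d(x_{n_j}, C) \leq H^{*}(C_{n_j}, C) \leq H(C_{n_j}, C) \to 0$, I can select $z_j \in C$ with $d(x_{n_j}, z_j) < d(x_{n_j}, C) + 1/j$. Then the triangle inequality $d(z_j, x) \leq d(z_j, x_{n_j}) + d(x_{n_j}, x) \to 0$ forces $z_j \to x$, and closedness of $C$ yields $x \in C$.

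There is no real obstacle here; the proof is a routine application of the definitions. The only points that require care are (a) the Hausdorff distance $H$ is in general only an extended metric, but the hypothesis $H(C_n, C) \to 0$ ensures it is finite for sufficiently large $n$, so the inequalities above are meaningful; (b) the sets $C_n$ and $C$ are merely closed rather than compact, so $d(x, C_n)$ and $d(x_{n_j}, C)$ need not be attained, which is handled by the standard $+1/n$ (resp. $+1/j$) selection; and (c) the closedness of $C$ is essential for the second inclusion, since otherwise the limit point of the $z_j$ might escape $C$.
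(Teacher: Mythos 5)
Your proof is correct and complete: the two inclusions $C \subseteq \liminf_{n\to\infty} C_n$ and $\limsup_{n\to\infty} C_n \subseteq C$, combined with the trivial containment $\liminf_{n\to\infty} C_n \subseteq \limsup_{n\to\infty} C_n$, force all three sets to coincide with $C$, and your handling of the non-attainment of infima (the $+1/n$ selection) and the closedness of $C$ is exactly what is needed. The paper does not reproduce a proof here but defers to Theorem 4.1 of \cite{huang}; your argument is the standard one that that reference carries out, so there is nothing to add.
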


\begin{re} \label{hmr}
  {\rm

Theorem \ref{hkg} implies that
for a sequence
$\{u_n\}$ in $F_{USC}(X)$ and an element $u$ in $F_{USC}(X)$,
if
$H_{\rm end} (u_n, u) \to 0$ as $n \to \infty$, then $\lim_{n\to\infty}^{(\Gamma)} u_n = u$.
However,
the converse is false. See Example 4.1 in \cite{huang}.
}
\end{re}

We \cite{huang} found
that
the endograph metric $H_{\rm end}$ and the $\Gamma$-convergence
 is compatible
under certain prerequisite.
The conclusion is listed in the following.

\begin{tm} (Theorem 9.2 in \cite{huang}) \label{com} \
Let $u$ be a fuzzy set in $F_{USCG} (\mathbb{R}^m) \setminus \{\emptyset_{F(\mathbb{R}^m)}\}$
and for $n=1,2,\ldots$,
let $u_n$ be a fuzzy set in $F_{USCGCON} (\mathbb{R}^m)$.
Then
$H_{\rm end} (u_n, u) \to 0$ as $n \to \infty$ if and only if $\lim_{n\to\infty}^{(\Gamma)} u_n = u$.
\end{tm}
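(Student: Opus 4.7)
The easy direction ($H_{\rm end}(u_n, u) \to 0 \Rightarrow \Gamma$-convergence) is immediate from Theorem \ref{hkg}: applying it to the closed sets $\mathrm{end}\, u, \mathrm{end}\, u_n \in C(\mathbb{R}^m \times [0,1])$ gives $\mathrm{end}\, u = \lim^{(K)} \mathrm{end}\, u_n$, which is exactly $\Gamma$-convergence.

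For the hard direction, I plan to argue by contradiction. Suppose $\lim^{(\Gamma)}_n u_n = u$ but $H_{\rm end}(u_n, u) \not\to 0$. Passing to a subsequence, either (a) there exist $(x_n, t_n) \in \mathrm{end}\, u_n$ with $\overline{d_m}((x_n, t_n), \mathrm{end}\, u) \geq \varepsilon$, or (b) there exist $(y_n, s_n) \in \mathrm{end}\, u$ with $\overline{d_m}((y_n, s_n), \mathrm{end}\, u_n) \geq \varepsilon$. In both cases, the height cannot tend to $0$: since $(z,0) \in \mathrm{end}\, v$ for every fuzzy set $v$ and every $z \in \mathbb{R}^m$, a height converging to $0$ would place the offending point within distance $t_n$ (or $s_n$) of the other endograph. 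So after a further subsequence we may assume $t_n \to t > 0$ (case (a)) or $s_n \to s > 0$ (case (b)).

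Case (b) is handled directly by the compactness hypothesis on $u$. For $n$ large, $s_n > s/2$, so $y_n \in [u]_{s/2}$, which is compact; extracting a convergent subsequence $y_n \to y$ and using upper semi-continuity yields $(y,s) \in \mathrm{end}\, u$. By $\Gamma$-convergence, $(y,s) \in \liminf \mathrm{end}\, u_n$, so there is a sequence $w_n \in \mathrm{end}\, u_n$ with $w_n \to (y,s)$; then $\overline{d_m}((y_n, s_n), w_n) \to 0$, contradicting $\overline{d_m}((y_n, s_n), \mathrm{end}\, u_n) \geq \varepsilon$.

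Case (a) is the crux, and here both the compactness and the connectedness of positive $\al$-cuts of the $u_n$ are used. Since $u \neq \emptyset_{F(\mathbb{R}^m)}$, fix $\al > 0$ with $[u]_\al \neq \emptyset$ and pick $y \in [u]_\al$; $\Gamma$-convergence provides $(y_n, \al_n) \in \mathrm{end}\, u_n$ with $(y_n, \al_n) \to (y, \al)$. Set $\gamma = \min(t/2, \al/2) > 0$. For $n$ large both $x_n$ and $y_n$ lie in $[u_n]_\gamma$, which is compact and connected. If $x_n$ admits no convergent subsequence then $|x_n| \to \infty$ along some subsequence, while $y_n$ stays bounded. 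Pick $R > \mathrm{diam}\,[u]_\gamma + |y| + 1$; for $n$ large the sphere $\{z : |z - y| = R\}$ separates $y_n$ from $x_n$, so by connectedness it meets $[u_n]_\gamma$ at some $z_n$. Boundedness of $\{z_n\}$ yields a convergent subsequence $z_n \to z$ with $|z - y| = R$, and Γ-convergence forces $(z, \gamma) \in \mathrm{end}\, u$, hence $z \in [u]_\gamma$; but $y \in [u]_\al \subseteq [u]_\gamma$ and $|z - y| = R > \mathrm{diam}\,[u]_\gamma$, contradicting compactness of $[u]_\gamma$. Hence $x_n$ has a convergent subsequence $x_n \to x$, and then $(x_n, t_n) \to (x, t) \in \limsup \mathrm{end}\, u_n \subseteq \mathrm{end}\, u$, so $\overline{d_m}((x_n, t_n), \mathrm{end}\, u) \to 0$, again contradicting the hypothesis.

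The step I expect to be the main obstacle is ruling out the escape of $x_n$ to infinity in case (a); the connectedness of $[u_n]_\gamma$ is essential to "trap" a bounded witness $z_n$ on a large sphere, and the compactness of $[u]_\gamma$ is what finally yields the contradiction. The rest is bookkeeping with subsequences, upper semi-continuity and the two definitions of $\liminf$ and $\limsup$ of sets.
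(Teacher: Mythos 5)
Your proof is correct. The paper does not prove Theorem \ref{com} directly---it quotes it from \cite{huang} and later re-derives it as a corollary of the more general Theorem \ref{fceg}---but your argument has the same skeleton as the proof of Theorem \ref{fceg}: argue by contradiction, split according to which Hausdorff excess fails to vanish, use compactness of the limit's positive cuts in one case, and use connectedness of the $u_n$'s cuts together with an intermediate-value argument to manufacture a bounded witness in the other. The one genuine difference is the device in the key step: the paper applies the intermediate value theorem to the distance function $(z,\zeta)\mapsto \overline{d_m}((z,\zeta), {\rm end}_\xi\, u)$ on the connected truncated endograph ${\rm end}_\xi\, u_{n_k}$, producing a point at exact distance $\xi$ from ${\rm end}_\xi\, u$; you apply it to the radial function $z\mapsto |z-y|$ on the connected cut $[u_n]_\gamma$, trapping a point on a sphere of radius $R > {\rm diam}\,[u]_\gamma$ about $y$. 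Both devices yield a bounded sequence whose cluster point must lie in $\limsup_{n\to\infty} {\rm end}\, u_n = {\rm end}\, u$ yet cannot. One small inaccuracy: you announce that case (a) uses ``both the compactness and the connectedness'' of the positive cuts of the $u_n$, but your argument never actually uses compactness of $[u_n]_\gamma$---only its connectedness, plus compactness of $[u]_\gamma$. That is not a gap; on the contrary, it is exactly the observation that lets the paper strengthen this statement to Theorems \ref{fce} and \ref{fceg}, where compactness of the $u_n$'s cuts is dropped.
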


The following Corollary \ref{fer} is an immediate corollary of Theorem 9.2 in \cite{huang}.

\begin{tl} \label{fer}
  Let $u$ be a fuzzy set in $F^1_{USCG} (\mathbb{R}^m)$
and for $n=1,2,\ldots$,
let $u_n$ be a fuzzy set in $F^1_{USCGCON} (\mathbb{R}^m)$.
Then
$H_{\rm end} (u_n, u) \to 0$ as $n \to \infty$ if and only if $\lim_{n\to\infty}^{(\Gamma)} u_n = u$.
\end{tl}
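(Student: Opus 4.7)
The plan is to deduce the statement as a direct specialization of Theorem~\ref{com} by checking that the normal-fuzzy-set classes appearing in Corollary~\ref{fer} are subclasses of the more general classes appearing in Theorem~\ref{com}. Once the two class inclusions are established, both directions of the equivalence follow at once by quoting Theorem~\ref{com} applied to the given $u$ and $\{u_n\}$.

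Concretely, I would carry out the argument as follows. First, take any $u\in F^1_{USCG}(\mathbb{R}^m)$. By definition $F^1_{USCG}(\mathbb{R}^m) = F^1_{USC}(\mathbb{R}^m) \cap F_{USCG}(\mathbb{R}^m)$, and the $F^1$ label means every $\al$-cut (including $[u]_1$) is nonempty in $C(\mathbb{R}^m)$; in particular $[u]_1\neq\emptyset$, so $u$ is not the empty fuzzy set. Hence
\[
u \in F_{USCG}(\mathbb{R}^m) \setminus \{\emptyset_{F(\mathbb{R}^m)}\},
\]
which is exactly the hypothesis required of the limit fuzzy set in Theorem~\ref{com}. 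Second, take any $u_n \in F^1_{USCGCON}(\mathbb{R}^m)$. Since $F^1_{USCGCON}(\mathbb{R}^m) = F^1_{USCG}(\mathbb{R}^m) \cap F_{CON}(\mathbb{R}^m)$ and $F^1_{USCG}(\mathbb{R}^m) \subseteq F_{USCG}(\mathbb{R}^m)$, we obtain
\[
u_n \in F_{USCG}(\mathbb{R}^m) \cap F_{CON}(\mathbb{R}^m) = F_{USCGCON}(\mathbb{R}^m),
\]
matching the hypothesis on the convergent sequence in Theorem~\ref{com}. With both hypotheses verified, Theorem~\ref{com} immediately yields the equivalence $H_{\rm end}(u_n,u)\to 0 \iff \lim_{n\to\infty}^{(\Gamma)} u_n = u$.

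There is no serious obstacle here: the entire content is the two set-theoretic inclusions $F^1_{USCG}\subseteq F_{USCG}\setminus\{\emptyset_{F(\mathbb{R}^m)}\}$ and $F^1_{USCGCON}\subseteq F_{USCGCON}$, both of which are read off from the definitions assembled in Section~2. The only small point worth stating explicitly (so that the reader does not need to re-derive it) is why the normality condition in $F^1$ rules out the empty fuzzy set; this is immediate from $[u]_1\in C(\mathbb{R}^m)$ (nonempty closed) being incompatible with $u\equiv 0$. Hence the proof is essentially a one-line reduction after recording these observations.
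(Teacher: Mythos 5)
Your proposal is correct and matches the paper's route exactly: the paper derives Corollary \ref{fer} as an immediate specialization of Theorem \ref{com}, and your two inclusions $F^1_{USCG}(\mathbb{R}^m)\subseteq F_{USCG}(\mathbb{R}^m)\setminus\{\emptyset_{F(\mathbb{R}^m)}\}$ (via $[u]_1\in C(\mathbb{R}^m)$ being nonempty) and $F^1_{USCGCON}(\mathbb{R}^m)\subseteq F_{USCGCON}(\mathbb{R}^m)$ are precisely the checks needed. No gaps.
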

Corollary \ref{fer} is
the normal fuzzy set case of Theorem 9.2 in \cite{huang} (Theorem \ref{com} in this paper),
which is an important special case of
 Theorem 9.2 in \cite{huang}.

In this section, we give an improvement of Corollary \ref{fer}.
 The following conclusions are useful.

\begin{lm} (Lemma 2.1 in \cite{huang}) \label{infe}
Let $(X,d)$ be a metric space,
and
$C_{n}$, $n=1,2,\ldots$, be a sequence of sets in $X$.
Suppose that $x\in X$.
Then
\\
(\romannumeral1) \
  $x \in \liminf_{n\rightarrow \infty} C_{n}$
  if and only if
$\lim_{n\to \infty} d(x, C_n) = 0$,
\\
(\romannumeral2) \
  $x \in \limsup_{n\rightarrow \infty} C_{n}$
  if and only if there is a subsequence $\{C_{n_k}\}$ of $\{C_n\}$
such
that
$\lim_{k\to \infty} d(x, C_{n_k}) = 0$.
\end{lm}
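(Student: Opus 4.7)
The plan is to prove both equivalences by directly translating between sequences $\{x_n\}$ with $x_n\in C_n$ converging to $x$ and the condition that distances $d(x,C_n)$ (or a subsequence thereof) tend to $0$. The entire argument rests on the elementary identity $d(x,C_n)=\inf_{y\in C_n}d(x,y)$, so once one learns how to select near-infimizers, the rest is bookkeeping; with the convention $d(x,\emptyset)=+\infty$, the assumption $d(x,C_n)\to 0$ automatically rules out empty $C_n$ for large $n$.

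For part (\romannumeral1), I would handle the easy direction first. Assume $x\in\liminf_{n\to\infty}C_n$. By the definition recalled earlier in the excerpt, there exist $x_n\in C_n$ with $x_n\to x$. Then $d(x,C_n)\le d(x,x_n)\to 0$, which gives $\lim_{n\to\infty} d(x,C_n)=0$. For the converse, assume $d(x,C_n)\to 0$. Fix $N$ so large that $C_n\ne\emptyset$ for all $n\ge N$, and for each $n\ge N$ choose $x_n\in C_n$ with $d(x,x_n)<d(x,C_n)+1/n$; for $n<N$ pick $x_n\in C_n$ arbitrarily (if $C_n=\emptyset$ for small $n$, simply start the sequence at index $N$, or adjust the definition accordingly). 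Then $d(x,x_n)\to 0$, so $x_n\to x$ with $x_n\in C_n$, which yields $x\in\liminf_{n\to\infty}C_n$.

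For part (\romannumeral2), the proof is structurally identical but works on subsequences. If $x\in\limsup_{n\to\infty}C_n$, the sequential definition in the excerpt directly provides indices $n_j\uparrow\infty$ and $x_{n_j}\in C_{n_j}$ with $x_{n_j}\to x$, whence $d(x,C_{n_j})\le d(x,x_{n_j})\to 0$. Conversely, if $d(x,C_{n_k})\to 0$ along some subsequence $\{n_k\}$, pick $x_{n_k}\in C_{n_k}$ with $d(x,x_{n_k})<d(x,C_{n_k})+1/k$ (valid for $k$ large, so that $C_{n_k}\ne\emptyset$; discard finitely many initial indices if needed). Then $x_{n_k}\to x$, which matches the sequential characterization of $\limsup$, so $x\in\limsup_{n\to\infty}C_n$.

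There is no real obstacle. The only point requiring any attention is the convention used for $d(x,\emptyset)$ and the fact that the sequential definition of $\liminf$ must be read up to discarding a finite initial segment; both are standard and cause no genuine difficulty. I would therefore keep the proof short, devoting one paragraph to each of (\romannumeral1) and (\romannumeral2) and invoking the representation of $\liminf$ and $\limsup$ stated in the paragraph introducing Kuratowski convergence rather than the intersection–closure formula.
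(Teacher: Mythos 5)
Your proposal is correct and follows essentially the same route as the paper's proof: one direction via $d(x,C_n)\le d(x,x_n)$, the other by selecting near-infimizers $x_n\in C_n$ with $d(x,x_n)\le d(x,C_n)+1/n$ (resp. $1/k$ along the subsequence). The only difference is your explicit handling of possibly empty $C_n$, which the paper's proof glosses over; this is a harmless refinement, not a change of method.
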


\begin{proof} Here we give a detailed proof. Readers who think that this conclusion is
obvious can skip this proof.

(\romannumeral1) \ Assume that $x\in \liminf_{n\to \infty} C_n$. Then there is a sequence $\{x_n, n=1,2,\ldots\}$ in $X$
 such that $x_n\in C_n$ for $n=1,2,\ldots$ and $\lim_{n\to \infty} d(x, x_n) = 0$.
Since $d(x, C_n) \leq d(x, x_n)$, thus $\lim_{n\to \infty}  d(x, C_n)  = 0$.

Conversely, assume that $\lim_{n\to \infty} d(x, C_n) = 0$. For each $n=1,2,\ldots$, we can choose an $x_n$ in $C_n$
such that $d(x, x_n) \leq d(x, C_n) + 1/n$.
Hence
$\lim_{n\to\infty} d(x, x_n) = 0$. So $x\in \liminf_{n\to \infty} C_n$.

(\romannumeral2) \ Assume that $x\in \limsup_{n\to \infty} C_n$.
Then
there is a subsequence $\{C_{n_k}\}$ of $\{C_n\}$
and $x_{n_k}\in C_{n_k}$ for $k=1,2,\ldots$ such that $\lim_{k\to \infty} d(x, x_{n_k}) = 0$.
Since
$d(x, C_{n_k}) \leq d(x, x_{n_k})$, thus $\lim_{k\to \infty} d(x, C_{n_k})  = 0$.

Conversely, assume that
 there is a subsequence $\{C_{n_k}\}$ of $\{C_n\}$ such that
$\lim_{k\to \infty} d(x, C_{n_k})  = 0$.
For each $k=1,2,\ldots$, we can choose an $x_{n_k}$ in $C_{n_k}$
such that $d(x, x_{n_k}) \leq d(x, C_{n_k}) + 1/k$.
Hence
$\lim_{k\to\infty} d(x, x_{n_k}) = 0$.
So $x\in \limsup_{n\to \infty} C_n$.

\end{proof}

\begin{tm} (Theorem 5.19 in \cite{huang719}) \label{aec}
Let $u$ be a fuzzy set in $F^1_{USCG} (X)$ and let $u_n$, $n=1,2,\ldots$, be fuzzy sets in $F^1_{USC} (X)$.
Then
the following are equivalent:
\begin{enumerate}
\renewcommand{\labelenumi}{(\roman{enumi})}

\item
  $H_{\rm end}(u_n, u) \to 0$;

  \item
 $H([u_n]_\al, [u]_\al) \rightarrow 0$ holds a.e. on $\al\in (0,1)$;

\item     $H ([u_n]_\al, [u]_\al) \to 0 $ for all $\al\in (0,1) \setminus P_0(u)$;

    \item   There is a dense subset $P$ of $(0,1) \backslash P_0(u)$ such that $H ([u_n]_\al, [u]_\al) \to 0 $ for
$\al\in P$;

\item     There is a countable dense subset $P$ of $(0,1) \backslash P_0(u)$
such that
 $H ([u_n]_\al, [u]_\al) \to 0 $ for
$\al\in P$.

  \end{enumerate}

\end{tm}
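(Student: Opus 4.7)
The plan is to close the cycle (i) $\Rightarrow$ (iii) $\Rightarrow$ (iv) $\Rightarrow$ (v) $\Rightarrow$ (i) and then to attach (ii) by noting that $P_0(u)$, which should be the countable set of Hausdorff-discontinuity points of $\alpha \mapsto [u]_\alpha$, has Lebesgue measure zero. The nontrivial steps are (i) $\Rightarrow$ (iii) and (v) $\Rightarrow$ (i); all other links are bookkeeping. Throughout I will use that each positive cut $[u]_\alpha$ is nonempty compact (the $F^1_{USCG}$ hypothesis on $u$) and that each $[u_n]_\alpha$ is closed and nonempty (normality of each $u_n$).

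For (i) $\Rightarrow$ (iii), fix $\alpha \in (0,1) \setminus P_0(u)$ and $\varepsilon > 0$. Hausdorff continuity of $\beta \mapsto [u]_\beta$ at $\alpha$ gives $\delta \in (0, \varepsilon/3)$ with $\alpha \pm \delta \in (0,1)$ and $H([u]_{\alpha \pm \delta}, [u]_\alpha) < \varepsilon/3$; take $n$ so large that $H_{\rm end}(u_n, u) < \delta$. If $x \in [u_n]_\alpha$, match $(x, \alpha)$ to a point $(y, t) \in {\rm end}\, u$ with $\overline{d} < \delta$; then $u(y) \geq t > \alpha - \delta$, so $y \in [u]_{\alpha - \delta}$, and left continuity delivers $z \in [u]_\alpha$ with $d(y, z) < \varepsilon/3$, giving $d(x, z) < \varepsilon$. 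Conversely, if $z \in [u]_\alpha$, right continuity gives $z' \in [u]_{\alpha + \delta}$ with $d(z, z') < \varepsilon/3$; matching $(z', \alpha + \delta) \in {\rm end}\, u$ to some $(w, s) \in {\rm end}\, u_n$ with $\overline{d} < \delta$ forces $s > \alpha$, hence $w \in [u_n]_\alpha$, so $d(z, w) < \varepsilon$. Thus $H([u_n]_\alpha, [u]_\alpha) < \varepsilon$. The links (iii) $\Rightarrow$ (iv) $\Rightarrow$ (v) are immediate (take $P := (0,1) \setminus P_0(u)$ for the first, and extract a countable dense subset for the second, using that $(0,1) \setminus P_0(u)$ is a separable subset of $\mathbb{R}$).

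The core implication is (v) $\Rightarrow$ (i). Given $\varepsilon > 0$, use density of $P$ in $(0,1) \setminus P_0(u)$ together with countability of $P_0(u)$ (hence density of $P$ in $(0,1)$ as well) to pick a finite ladder $\beta_0 < \beta_1 < \cdots < \beta_N$ in $P$ with $\beta_0 < \varepsilon$, $1 - \beta_N < \varepsilon$, and $\beta_{i+1} - \beta_i < \varepsilon$ for all $i$. For $n$ large enough, $H([u_n]_{\beta_i}, [u]_{\beta_i}) < \varepsilon$ simultaneously for every $i$. Given $(x, t) \in {\rm end}\, u$, split by the strip containing $t$: if $t < \beta_0$, match to $(x, 0) \in {\rm end}\, u_n$ at $\overline{d}$-distance $< \varepsilon$; if $\beta_i \leq t < \beta_{i+1}$, use the nesting $x \in [u]_t \subseteq [u]_{\beta_i}$ to pick $y \in [u_n]_{\beta_i}$ with $d(x, y) < \varepsilon$, yielding $(y, \beta_i) \in {\rm end}\, u_n$ within $2\varepsilon$; if $t \geq \beta_N$, use $\beta_N$ analogously, with $1 - \beta_N < \varepsilon$ bounding the $t$-coordinate gap. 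The symmetric argument starting from $(x, t) \in {\rm end}\, u_n$ is identical (normality of $u_n$ makes $[u_n]_{\beta_i}$ nonempty). Together these give $H_{\rm end}(u_n, u) \leq 2\varepsilon$. Finally, (iii) $\Rightarrow$ (ii) is immediate from countability of $P_0(u)$, and (ii) $\Rightarrow$ (iv) follows by intersecting the full-measure convergence set with $(0,1) \setminus P_0(u)$, which retains full measure in $(0,1)$ and is therefore dense in $(0,1) \setminus P_0(u)$.

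The principal obstacle is (v) $\Rightarrow$ (i): the finite-ladder sandwich works only because each $[u]_{\beta_i}$ with $\beta_i > 0$ is compact, so a finite collection of levels uniformly controls every positive-height horizontal slice of ${\rm end}\, u$; without compactness of the positive cuts of $u$, slices of ${\rm end}\, u$ at a single height cannot be uniformly approximated by slices at nearby ladder heights, and the whole sandwich collapses. A secondary care point is ensuring that the right-continuity used in (i) $\Rightarrow$ (iii) is genuinely available at $\alpha \notin P_0(u)$ (left continuity is automatic from compactness of positive cuts), which is exactly what the exclusion of $P_0(u)$ in statements (iii)--(v) encodes.
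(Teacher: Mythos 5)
This theorem is imported here from \cite{huang719} (Theorem 5.19) and the present paper gives no proof of it, so there is nothing to compare against; judged on its own, your argument is correct and is the standard level-set proof: the chain (i)$\Rightarrow$(iii)$\Rightarrow$(iv)$\Rightarrow$(v)$\Rightarrow$(i) via the finite-ladder sandwich, with (ii) attached through the null set $P_0(u)$, all checks out, including the nonemptiness bookkeeping coming from normality of $u$ and of the $u_n$. The one caveat is that $P_0(u)$ is never defined in this paper; your reading of it as the (countable) set of levels where $\beta\mapsto[u]_\beta$ fails to be right-continuous in $H$ is the intended one, and both the countability and the automatic left-continuity you invoke do follow from the compactness of the positive cuts of $u$, but strictly speaking your proof is conditional on that identification. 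A small correction to your closing commentary: the sandwich in (v)$\Rightarrow$(i) itself uses only nestedness and nonemptiness of the cuts, not their compactness; where compactness of $[u]_\al$ genuinely enters is in making $P_0(u)$ countable (hence $(0,1)\setminus P_0(u)$ dense, so that a fine ladder inside $P$ exists at all) and in securing the two-sided Hausdorff continuity used in (i)$\Rightarrow$(iii).
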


\begin{tm} (Theorem 6.2 in \cite{huang})
 \label{gdmc}
Let $u$, $u_n$, $n=1,2,\ldots$ be fuzzy sets
  in $F_{USC} ( \mathbb{R}^m )$. Then the following are equivalent:
\begin{enumerate}
  \renewcommand{\labelenumi}{(\roman{enumi})}

    \item $\lim_{n\to \infty}^{(\Gamma)}  u_n = u $;

      \item $\lim^{(K)}_{n\to \infty} [u_n]_\al = [u]_\al$ holds a.e. on $\al \in (0,1)$;

    \item    $\lim^{(K)}_{n\to \infty} [u_n]_\al = [u]_\al$ holds for all $\al\in (0,1) \setminus P(u)$;

    \item     There is a dense subset $P$ of $(0,1) \backslash P(u)$ such that $\lim^{(K)}_{n\to \infty} [u_n]_\alpha = [u]_\al$ holds for
$\al\in P$;

 \item There is a countable dense subset $P$ of $(0,1) \backslash P(u)$ such that $\lim^{(K)}_{n\to \infty} [u_n]_\alpha = [u]_\al$ holds for $\al\in P$.

\end{enumerate}

\end{tm}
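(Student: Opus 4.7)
The plan is to prove the cycle (i)$\Rightarrow$(iii)$\Rightarrow$(ii)$\Rightarrow$(iv)$\Rightarrow$(v)$\Rightarrow$(i). Throughout I take $P(u)$ to be the (at most countable) set of $\al \in (0,1)$ at which $[u]_\al \neq \overline{\bigcup_{\beta > \al}[u]_\beta}$, and I use Lemma \ref{infe} to translate $\liminf$/$\limsup$ of sets into statements about distances from a fixed point or, equivalently, about the existence of approximating sequences. The guiding principle is that ${\rm end}\, u$ stacks all the $\al$-cuts together, so properties at a fixed height $\al$ can be extracted from endograph convergence by plugging in pairs $(\cdot,\al)$, and, conversely, endograph information can be assembled from level-set data by varying the height.

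For (i)$\Rightarrow$(iii) I fix $\al \in (0,1)\setminus P(u)$. The inclusion $\limsup_n [u_n]_\al \subseteq [u]_\al$ is immediate: a sequence $x_{n_k}\in [u_{n_k}]_\al$ with $x_{n_k}\to x$ makes $(x_{n_k},\al)\in {\rm end}\,u_{n_k}$ converge to $(x,\al)$, so $(x,\al)\in \limsup_n {\rm end}\,u_n = {\rm end}\,u$ by (i), giving $x\in [u]_\al$. The nontrivial inclusion is $[u]_\al\subseteq \liminf_n [u_n]_\al$, and here $\al\notin P(u)$ is essential. Given $x\in [u]_\al$, I use $[u]_\al = \overline{\bigcup_{\beta>\al}[u]_\beta}$ to pick $x^{(k)}\in [u]_{\beta_k}$ with $\beta_k>\al$ and $x^{(k)}\to x$; each $(x^{(k)},\beta_k)\in {\rm end}\,u = \liminf_n {\rm end}\,u_n$, so Lemma \ref{infe} furnishes $(y_n^{(k)}, s_n^{(k)})\in {\rm end}\,u_n$ converging to $(x^{(k)},\beta_k)$. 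Since $s_n^{(k)}\to \beta_k>\al$, one has $s_n^{(k)}>\al$ eventually, whence $y_n^{(k)}\in [u_n]_{s_n^{(k)}}\subseteq [u_n]_\al$, so $x^{(k)}\in \liminf_n [u_n]_\al$ for every $k$, and closedness of $\liminf$ yields $x\in \liminf_n [u_n]_\al$.

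The middle implications (iii)$\Rightarrow$(ii)$\Rightarrow$(iv)$\Rightarrow$(v) are formal consequences of $P(u)$ being at most countable, so that $(0,1)\setminus P(u)$ has full Lebesgue measure and is dense in $(0,1)$: an almost-everywhere statement on $(0,1)$ restricts to a full-measure, hence dense, subset of $(0,1)\setminus P(u)$, from which a countable dense subset can be extracted by standard separability.

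For (v)$\Rightarrow$(i) I take $P\subseteq (0,1)\setminus P(u)$ to be the given countable dense set. To prove $\limsup_n {\rm end}\,u_n \subseteq {\rm end}\,u$, suppose $(x_{n_k},t_{n_k})\to (x,t)$ with $(x_{n_k},t_{n_k})\in {\rm end}\,u_{n_k}$: for each $\al\in P$ with $\al<t$ one has $t_{n_k}>\al$ eventually, so $x_{n_k}\in [u_{n_k}]_\al$, whence $x\in [u]_\al$ by hypothesis, and letting $\al\uparrow t$ along $P$ forces $u(x)\geq t$. For ${\rm end}\,u \subseteq \liminf_n {\rm end}\,u_n$ I fix $(x,t)\in {\rm end}\,u$; the case $t=0$ is trivial, so assume $t>0$ and pick $\al_k\in P$ with $\al_k\uparrow t$. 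Since $x\in [u]_{\al_k} = \liminf_n [u_n]_{\al_k}$, Lemma \ref{infe}(i) yields $d(x,[u_n]_{\al_k})\to 0$ as $n\to\infty$ for every $k$, and a diagonal extraction produces $(x_n,\al_{k(n)})\in {\rm end}\,u_n$ with $\al_{k(n)}\to t$ and $x_n\to x$. I expect the main obstacle to be the $\liminf$ half of (i)$\Rightarrow$(iii): a naive sequence extracted from $(x,\al)\in \liminf_n {\rm end}\,u_n$ only gives $x_n\in [u_n]_{s_n}$ with $s_n\to \al$ possibly from below, which does not place $x_n$ in $[u_n]_\al$; bypassing this by moving to a slightly higher level $\beta>\al$, which is exactly what $\al\notin P(u)$ enables, is the key technical trick.
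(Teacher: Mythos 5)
The paper does not prove this statement; it is quoted verbatim from Theorem 6.2 of \cite{huang}, so there is no in-paper proof to compare against. Your argument is a correct, self-contained proof and follows the standard route (the same one used in \cite{huang}): slice the endograph at a fixed height to pass from $\Gamma$-convergence to level-set convergence, and reassemble the endograph from a dense family of levels for the converse. The cycle (i)$\Rightarrow$(iii)$\Rightarrow$(ii)$\Rightarrow$(iv)$\Rightarrow$(v)$\Rightarrow$(i) is complete, and you correctly isolate the one genuinely delicate step, namely $[u]_\al\subseteq\liminf_n[u_n]_\al$ in (i)$\Rightarrow$(iii), where approaching $x$ from strictly higher levels $\beta>\al$ is exactly what $\al\notin P(u)$ licenses. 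Two items are asserted rather than proved and should at least be flagged: the identification of $P(u)$ as $\{\al\in(0,1):[u]_\al\neq\overline{\{u>\al\}}\}$ (the paper never defines $P(u)$; your choice matches \cite{huang}), and the countability of $P(u)$, on which the whole middle chain and the density of $P$ in $(0,1)$ in step (v)$\Rightarrow$(i) depend. The latter does hold for $u\in F_{USC}(\mathbb{R}^m)$ --- each $\al\in P(u)$ is a local maximum value of $u$, and a second-countable base argument shows there are only countably many such values --- but a complete writeup should include this, as well as the routine remark that the Kuratowski $\liminf$ conditions need only be checked on tails of the sequences (finitely many initial terms, where e.g. $[u_n]_\al$ could be empty, are harmless).
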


\begin{pp} \label{cne}
Let $C$ be a nonempty compact set in $\mathbb{R}^m$ and for $n=1,2,\ldots$ let $C_n$
be a nonempty connected and closed set in $\mathbb{R}^m$.
Then
$H(C_n, C) \to 0$ if and only if $\lim_{n\to\infty}^{(K)} C_n =C$.
 \end{pp}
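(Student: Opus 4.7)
The forward direction is immediate: if $H(C_n,C) \to 0$ then $\lim^{(K)}_{n \to \infty} C_n = C$ by Theorem \ref{hkg}, with no use of connectedness of $C_n$ or compactness of $C$. So the substance is to prove that Kuratowski convergence $\lim^{(K)}_{n \to \infty} C_n = C$ implies $H(C_n,C) \to 0$, and it is here that the hypotheses must be used. I would split this into the two one-sided Hausdorff excesses $H^{*}(C,C_n) \to 0$ and $H^{*}(C_n,C) \to 0$.

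For $H^{*}(C,C_n) \to 0$, I would argue by contradiction: if not, there exist $\varepsilon>0$, a subsequence $\{C_{n_k}\}$ and points $x_{n_k}\in C$ with $d(x_{n_k},C_{n_k}) \geq \varepsilon$. Since $C$ is compact, pass to a further subsequence with $x_{n_k} \to x \in C$. Then by the triangle inequality $\liminf_{k\to\infty} d(x, C_{n_k}) \geq \varepsilon$, contradicting $x \in C = \liminf_{n\to\infty} C_n$ via Lemma \ref{infe}(i). This step uses only compactness of $C$ and the $\liminf$ inclusion, not connectedness.

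The main obstacle, and the step where connectedness enters essentially, is $H^{*}(C_n,C) \to 0$. Again I would argue by contradiction: suppose there exist $\varepsilon>0$, a subsequence $\{C_{n_k}\}$ and points $y_{n_k} \in C_{n_k}$ with $d(y_{n_k},C) \geq \varepsilon$. Fix any $x_0\in C$; since $x_0\in C = \liminf_{n\to\infty} C_n$, Lemma \ref{infe}(i) yields $x_{n_k}\in C_{n_k}$ with $x_{n_k} \to x_0$, so $d(x_{n_k},C) \to 0$, and in particular $d(x_{n_k},C) < \varepsilon/2$ for all large $k$. The map $y \mapsto d(y,C)$ is continuous on $\mathbb{R}^m$, so its restriction to the connected set $C_{n_k}$ has connected (hence interval) image in $\mathbb{R}$; since that image contains values below $\varepsilon/2$ and above $\varepsilon$, by the intermediate value property there exists $z_{n_k}\in C_{n_k}$ with $d(z_{n_k},C)=\varepsilon/2$.

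Now the crucial closing move: because $C$ is compact, hence bounded, the level set $\{z\in\mathbb{R}^m : d(z,C) = \varepsilon/2\}$ is bounded, so $\{z_{n_k}\}$ is a bounded sequence in $\mathbb{R}^m$. Extract a convergent sub-subsequence $z_{n_{k_j}} \to z$; by continuity of $d(\cdot,C)$, $d(z,C)=\varepsilon/2$, so $z\notin C$. On the other hand $z_{n_{k_j}}\in C_{n_{k_j}}$ converges to $z$, so $z\in \limsup_{n\to\infty} C_n = C$, a contradiction. This completes the proof. The delicate point is precisely this intermediate-value/bridge argument: without connectedness of $C_n$ one cannot manufacture the controlled-distance point $z_{n_k}$, and without compactness (boundedness) of $C$ one cannot extract a convergent subsequence from $\{z_{n_k}\}$.
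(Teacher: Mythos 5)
Your proposal is correct and follows essentially the same route as the paper's proof: the forward direction via Theorem \ref{hkg}, the $H^{*}(C,C_n)$ half by compactness of $C$ and Lemma \ref{infe}(\romannumeral1), and the $H^{*}(C_n,C)$ half by the intermediate-value argument applied to $z\mapsto d(z,C)$ on the connected set $C_{n_k}$, closed off by boundedness of the resulting level set. The only cosmetic difference is your choice of the intermediate level $\varepsilon/2$ where the paper uses $\varepsilon$.
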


\begin{proof}
  From Theorem \ref{hkg}, we have that $H(C_n, C) \to 0  \Rightarrow \lim_{n\to\infty}^{(K)} C_n =C$.

Now we show that $\lim_{n\to\infty}^{(K)} C_n =C \Rightarrow H(C_n, C) \to 0$.
We prove by contradiction.
Assume that $\lim_{n\to\infty}^{(K)} C_n =C$ but $H(C_n, C) \not\to 0$.
Then
$H^*(C, C_n) \not\to 0$ or $H^*( C_n, C) \not\to 0$.
We split the proof into two cases.

Case (\romannumeral1) $H^*(C, C_n) \not\to 0$.

In this case, there is an $\varepsilon>0$ and a subsequence $\{C_{n_k}\}$ of $\{C_n\}$
such that
$H^*(C, C_{n_k}) > \varepsilon$.
Thus for each $k=1,2,\ldots$ there exists
$x_k \in C$ such that $d(x_k, C_{n_k}) > \varepsilon$.
Since $C$ is compact,
there is a subsequence $\{x_{k_l}\}$ of $\{x_k\}$ which converges to $x\in C$.
Then
there is a $L(\varepsilon)$ such that
$d(x, x_{k_l}) < \varepsilon/2$ for all $l\geq L$. Hence
$d(x, C_{n_{k_l}}) \geq d(x_{k_l}, C_{n_{k_l}}) - d(x, x_{k_l}) > \varepsilon/2$. By Lemma \ref{infe} (\romannumeral1), this contradicts $x\in C = \liminf_{n\to\infty} C_n$.

Case (\romannumeral2) $H^*( C_n, C) \not\to 0$.

In this case,
 there is an $\varepsilon>0$ and a subsequence $\{C_{n_k}\}$ of $\{C_n\}$
such that
$H^*(C_{n_k}, C) > \varepsilon$.
Thus we have the following:
\\
(a) for each $k=1,2,\ldots$ there exists
$x_{n_k} \in C_{n_k}$ such that $d(x_{n_k}, C) > \varepsilon$.

Pick $y\in C$, since $C = \liminf_{n\to\infty} C_n$, we can find a sequence $\{y_n\}$
satisfying that
$y_n \in C_n$ for
$n=1,2,\ldots$
and
$\{y_n\}$ converges to $y$.
Hence
 there is a $N (\varepsilon)$ such that for all $n\geq N$,
$d(y_n, y) < \varepsilon$.
Since $d(y_n, C) \leq d(y_n, y) $,
we have the following:
\\
 (b) for all $n\geq N$,
$d(y_n, C) < \varepsilon$.

Let $k\in \mathbb{N}$ with $n_k \geq N$.
Define a function $f_k$ from $C_{n_k}$ to $\mathbb{R}$ given by $f_k (z)=d(z, C)$ for each $z \in C_{n_k}$.
 Then $f_k$  is a continuous function on $C_{n_k}$. Since
$C_{n_k}$ is a connected set in $\mathbb{R}^m$,
$f_k (C_{n_k})$
 is a connected set in $\mathbb{R}$.
Combined this fact with the above
clauses (a) and (b), we obtain
that there exists $z_{n_k} \in C_{n_k}$ such that
\begin{equation}\label{eun}
d(z_{n_k}, C) = \varepsilon.
\end{equation}
From \eqref{eun} and the compactness of
$C$, the set
 $\{   z_{n_k}, n_k \geq N   \}$ is bounded in $\mathbb{R}^m$, and thus $\{ z_{n_k}, n_k \geq N   \}$ has a cluster point $z$.
By \eqref{eun}, $d(z, C) = \varepsilon$, which contradicts
$z\in \limsup_{n\to\infty} C_n = C$.

\end{proof}

\begin{re}
{\rm

Proposition \ref{cne}
may be known, however we can't find this conclusion in the references that we can
obtain. So we give a proof here.

Let $A$ be a nonempty compact set in $\mathbb{R}^m$ and $B$ a nonempty closed set in $\mathbb{R}^m$.
If $H(A,B) < +\infty$, then $B$ is bounded and hence a compact set in $\mathbb{R}^m$.
Clearly, if $B$ is compact in $\mathbb{R}^m$, then $H(A,B) < +\infty$.
So
 $H(A,B) < +\infty$ if and only if $B$ is a compact set in $\mathbb{R}^m$.

From the above fact we know that
 for $C$ and $C_n$, $n=1,2,\ldots$, satisfying the
assumptions
of Proposition \ref{cne},
if $H(C_n, C) \to 0$ (by Proposition \ref{cne} $H(C_n, C) \to 0$
if and only if $\lim_{n\to\infty}^{(K)} C_n =C$),
  then clearly there is an $N \in \mathbb{N}$ such that for all $n\geq N$,
$H(C_n, C) < +\infty$ and thus for all $n\geq N$, $C_n$ is compact.
}
\end{re}

The following Theorem \ref{fce} is the main result of this section,
which
is an immediate consequence of
Proposition \ref{cne}, and the level characterizations of the endograph metric $H_{\rm end}$ and
the $\Gamma$-convergence (Theorems \ref{aec} and \ref{gdmc}).

 Theorem \ref{fce}
 shows that
the condition of the compactness of the positive $\al$-cuts
of the sequence
$\{u_n\}$ in Corollary \ref{fer} can be deleted.
So clearly
Corollary \ref{fer}
is a
corollary of Theorem \ref{fce}.

\begin{tm} \label{fce}
Let $u$ be a fuzzy set in $F^1_{USCG} (\mathbb{R}^m)$
and for $n=1,2,\ldots$,
let $u_n$ be a fuzzy set in $F^1_{USCCON} (\mathbb{R}^m)$.
Then
$H_{\rm end} (u_n, u) \to 0$ as $n\to\infty$ if and only if $\lim_{n\to\infty}^{(\Gamma)} u_n = u$.
\end{tm}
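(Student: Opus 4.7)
\medskip

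\noindent\textbf{Proof plan.} The forward implication $H_{\rm end}(u_n,u)\to 0 \Rightarrow \lim^{(\Gamma)}_{n\to\infty} u_n = u$ is already given by Remark \ref{hmr} (which is itself an immediate application of Theorem \ref{hkg} to $\{{\rm end}\,u_n\}$ in $C(\mathbb{R}^m\times[0,1])$). So the only thing that needs work is the converse, and my plan is to run the level-set machinery: upgrade the a.e.\ Kuratowski convergence of the $\al$-cuts (supplied by the $\Gamma$-convergence characterization) to a.e.\ Hausdorff convergence of the $\al$-cuts (using Proposition \ref{cne}), and then feed this back through the level characterization of $H_{\rm end}$.

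\medskip

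\noindent More concretely, suppose $\lim^{(\Gamma)}_{n\to\infty} u_n = u$. By Theorem \ref{gdmc}, $\lim^{(K)}_{n\to\infty}[u_n]_\al = [u]_\al$ for every $\al\in(0,1)\setminus P(u)$. I then pick a countable dense subset
$$ P \subseteq (0,1)\setminus\bigl(P(u)\cup P_0(u)\bigr),$$
which is possible because $P(u)$ and $P_0(u)$ are both ``small'' (countable) sets of level-value exceptions, so their union's complement in $(0,1)$ is dense. For any $\al\in P$ the hypotheses of Proposition \ref{cne} are satisfied: on the one hand $u\in F^1_{USCG}(\mathbb{R}^m)$ gives that $[u]_\al$ is nonempty and compact in $\mathbb{R}^m$ (nonempty because $u$ is normal, compact because $\al>0$); on the other hand $u_n\in F^1_{USCCON}(\mathbb{R}^m)$ gives that $[u_n]_\al$ is nonempty, closed and connected in $\mathbb{R}^m$ (nonempty again by normality). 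Hence Proposition \ref{cne} converts the Kuratowski convergence $\lim^{(K)}_{n\to\infty}[u_n]_\al=[u]_\al$ into Hausdorff convergence $H([u_n]_\al,[u]_\al)\to 0$ for every $\al\in P$.

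\medskip

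\noindent Finally, since $P$ is a countable dense subset of $(0,1)\setminus P_0(u)$ on which $H([u_n]_\al,[u]_\al)\to 0$, the implication (v)$\Rightarrow$(i) of Theorem \ref{aec} (applied with $u\in F^1_{USCG}(\mathbb{R}^m)$ and $u_n\in F^1_{USC}(\mathbb{R}^m)$, which holds because $F^1_{USCCON}(\mathbb{R}^m)\subseteq F^1_{USC}(\mathbb{R}^m)$) yields $H_{\rm end}(u_n,u)\to 0$, completing the proof. The only genuinely nontrivial step is the compactness/connectedness argument inside Proposition \ref{cne}; everything else is a straightforward combination of the two level-characterization theorems. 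The main conceptual point to stress is that dropping the compactness assumption on the cuts $[u_n]_\al$ (as compared with Corollary \ref{fer}) is harmless precisely because Proposition \ref{cne} only needs connectedness of the $[u_n]_\al$'s together with compactness of the single limit cut $[u]_\al$.
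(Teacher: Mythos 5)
Your proposal is correct and follows essentially the same route as the paper: forward direction via Theorem \ref{hkg}, and the converse by passing through the level characterizations (Theorems \ref{aec} and \ref{gdmc}) with Proposition \ref{cne} supplying the pointwise equivalence of Kuratowski and Hausdorff convergence of the $\al$-cuts. The only (immaterial) difference is that you invoke the ``countable dense subset'' items of Theorems \ref{aec} and \ref{gdmc} where the paper uses the ``a.e.\ on $(0,1)$'' items; both rest on the same smallness of the exceptional sets $P(u)$ and $P_0(u)$.
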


\begin{proof}

The proof is routine.

By let $X= \mathbb{R}^m$ in Theorem \ref{aec} we have the following:
\\
(\romannumeral1) $H_{\rm end} (u_n, u) \to 0$ if and only if
  $H ([u_n]_\al, [u]_\al) \to 0 $ holds a.e. on $\al\in (0,1)$.

By Theorem \ref{gdmc}, we have
the following:
\\
 (\romannumeral2)
$\lim_{n\to \infty}^{(\Gamma)}  u_n = u $
if and only if
  $[u]_\al=\lim^{(K)}_{n\to \infty} [u_n]_\al $ holds a.e. on $\al\in (0,1)$.

Since
for each $\al\in (0,1]$ and $n\in \mathbb{N}$, $[u]_\al \in K(\mathbb{R}^m)$,  $[u_n]_\al \in C(\mathbb{R}^m)$ and $[u_n]_\al$ is connected in $\mathbb{R}^m$.
Thus by Proposition \ref{cne}, for each
 $\alpha \in (0,1]$,
$H ([u_n]_\al, [u]_\al) \to 0$
if and only if
$[u]_\al=\lim^{(K)}_{n\to \infty} [u_n]_\al $.
Combined this fact with the above clauses (\romannumeral1) and (\romannumeral2),
we have
that
$H_{\rm end} (u_n, u) \to 0$ if and only if $\lim_{n\to \infty}^{(\Gamma)}  u_n = u$.

\end{proof}

\begin{re}
{\rm
Theorem \ref{fce} would be false if $\mathbb{R}^m$
were replaced by a general metric space $X$.

}
\end{re}

Here we mention that
for $u \in F^1_{USCG}(\mathbb{R})$ and a sequence $\{u_n\}$
in $F^1_{USCCON}(\mathbb{R})$,
$H_{\rm end} (u_n, u) \to 0$ does not imply that
there is an $N$
satisfying that for all $n\geq N$,
$u_n \in F^1_{USCGCON}(\mathbb{R})$.

The following Example \ref{mke} is a such example, which shows
that
there exists a
$u \in F^1_{USCG}(\mathbb{R})$ and a sequence $\{u_n\}$
in $F^1_{USCCON}(\mathbb{R})$
such that
\\
(\romannumeral1)
$H_{\rm end} (u_n, u) \to 0$,
\\
(\romannumeral2) for each $n=1,2,\ldots$, $u_n\notin F^1_{USCGCON}(\mathbb{R})$.

\begin{eap} \label{mke}
  {\rm
Let $u= \widehat{1}_{F(\mathbb{R})} \in F^1_{UCCG}(\mathbb{R})$.
For $n=1,2,\ldots$,
define $u_n \in F^1_{USC}(\mathbb{R})$ as follows:
\[u_n (t) = \left\{
              \begin{array}{ll}
                1, & t=1,\\
              1/n, & t\not=1.
              \end{array}
            \right.
\]
Then for $n=1,2,\ldots$,
\[ [u_n]_\al = \left\{
              \begin{array}{ll}
                \{1\}, & \al \in (1/n,1],\\
              \mathbb{R}, & \al\in [0,1/n].
              \end{array}
            \right.
\]
So for each $n=1,2,\ldots$, $u_n \in  F^1_{USCCON}(\mathbb{R})$ but $u_n \notin  F^1_{USCGCON}(\mathbb{R})$.
It can be seen that $H_{\rm end}(u, u_n) = 1/n \to 0$.

}
\end{eap}

We can see that Theorem \ref{fce} is an improvement of Corollary \ref{fer}.

\section{Compatibility of
 endograph metric $H_{\rm end}$ and $\Gamma$-convergence on general fuzzy sets}
\label{nceu}

In this section, we
give a general conclusion on compatibility of the endograph metric $H_{\rm end}$ and the $\Gamma$-convergence.
This conclusion applies to the general fuzzy sets,
which are allowed to not satisfy the normality.
 This conclusion
greatly improves both the corresponding result in \cite{huang} (Theorem \ref{com} in this paper)
and the result in Section \ref{nce}.
This conclusion
shows that
not only
the condition of the compactness of the positive $\al$-cuts
of the convergent sequence
can be deleted,
but also
the condition of the connectedness
of
  all the positive $\al$-cuts
of the convergent sequence
 can be replaced by a much weaker condition introduced in this section, which is called the connectedness condition.
Compared with the corresponding result in \cite{huang},
we adopt a new proof idea
 to show the conclusion in this section.

Below well-known conclusions are useful in this paper.
\begin{itemize}
  \item[]
  Let $(Z,\tau_1)$ and $(Y, \tau_2)$ be topological space, and let
$f$ be a continuous function from $(Z,\tau_1)$ to $(Y, \tau_2)$.

\item[] (\romannumeral1) \ If $A$ is compact in $(Z,\tau_1)$, then $f(A)$ is compact in $(Y, \tau_2)$.

\item[] (\romannumeral2) \ If $A$ is connected in $(Z,\tau_1)$, then $f(A)$ is connected in $(Y, \tau_2)$.
\end{itemize}

Let $p_X$ be the projection mapping from $(X\times [0,1], \overline{d})$ to $(X, d)$;
that is,
$p_X (x,\al) =x$ for each $(x,\al) \in  X\times [0,1]$.
Clearly for $(x,\al), (y,\beta)\in  X\times [0,1]$,
$d(p_X(x,\al), p_X(y,\beta)) = d(x,y) \leq \overline{d} ( (x,\al), (y,\beta)) $.
So $p_X$ is continuous.

For a set $S$ in $ X$, we use $\overline{S}$
to denote the topological closure of
$S$ in $(X,d)$;
for a set $S$ in $ X\times [0,1]$, we use $\overline{S}$
to denote the topological closure of
$S$ in $(X\times [0,1], \overline{d})$.
The readers can judge
the meaning of
$\overline{S}$
according to the contexts.

First, we give some conclusions on connectedness of a set in $X \times [0,1] $.

For $D \subseteq X \times [0,1] $ and $\al\in [0,1]$,
define
$\bm{D_\al} := \{x\in X:  (x,\al) \in D\}$.
For a nonempty set $D$ in $X \times [0,1] $,
define
$\bm{f_D} := \inf\{ \al:  (x,\al) \in D    \}$
and
$\bm{S_D} := \sup\{ \al:  (x,\al) \in D    \}$.

\begin{pp} \label{cptm}
  Let
 $D$ be a nonempty set in $X\times [0,1]$ with
$D_r \subseteq D_t$ for $f_D \leq t \leq r \leq 1$.
Then
 $D_{f_D}$ is connected in $(X,d)$
if and only if
 $D$ is connected in $(X \times [0,1], \overline{d})$.
\end{pp}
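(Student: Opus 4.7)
The plan is to exploit the downward-closed vertical structure that the hypothesis $D_r \subseteq D_t$ (for $f_D \le t \le r \le 1$) imposes on $D$. The key preliminary observation is that for any $(x,\al) \in D$ we automatically have $\al \ge f_D$, so taking $r=\al$ and $t=f_D$ in the hypothesis yields $x \in D_{f_D}$, and more generally $\{x\} \times [f_D,\al] \subseteq D$. In particular, $f_D$ is attained, $D_{f_D}$ is nonempty, and the projection $p_X$ satisfies $p_X(D) = D_{f_D}$.

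For the ``only if'' direction ($D$ connected implies $D_{f_D}$ connected), I would simply invoke continuity of the projection $p_X:(X\times[0,1],\overline{d}) \to (X,d)$, verified earlier in this section, together with the standard fact that continuous images of connected sets are connected; then $D_{f_D} = p_X(D)$ is connected.

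For the ``if'' direction ($D_{f_D}$ connected implies $D$ connected), I would use a ``hub and spokes'' decomposition of $D$. The hub $D_{f_D} \times \{f_D\} \subseteq D$ is connected since it is isometric, in the induced metric, to $D_{f_D}$. The spokes are the vertical segments $\{x\} \times [f_D,\al]$ for $(x,\al) \in D$; by the preliminary observation each spoke lies in $D$ and is connected, and each meets the hub at $(x,f_D)$. Thus each $(D_{f_D}\times\{f_D\}) \cup (\{x\}\times[f_D,\al])$ is a union of two connected sets with a common point, hence connected, and all such pieces share the common connected subset $D_{f_D} \times \{f_D\}$. Since $D$ equals the union of these pieces over $(x,\al)\in D$, the standard lemma that a union of connected sets with common nonempty intersection is connected gives that $D$ is connected.

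The only conceptual subtlety is the preliminary observation that the infimum $f_D$ is forced to be attained (so that $D_{f_D}$ is nonempty and the hub exists at all); once this is noted, both directions reduce to routine applications of standard connectedness facts, and no technical obstacle arises.
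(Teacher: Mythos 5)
Your proposal is correct, and the ``only if'' direction (projection $p_X$, $p_X(D)=D_{f_D}$, continuous image of a connected set) is exactly the paper's necessity argument. For the harder ``if'' direction, however, you take a genuinely different route. The paper argues by contradiction: it supposes $D=A\cup B$ with $A\cap\overline{B}=B\cap\overline{A}=\emptyset$, places the connected hub $D_{f_D}\times\{f_D\}$ inside $A$, picks $(x,\al)\in B$, and analyzes $\gamma=\inf\{\beta:(x,\beta)\in B\}$ along the vertical fiber $\{x\}\times[f_D,\al]$ to force a point of $B$ into $\overline{A}$ or a point of $A$ into $\overline{B}$. You instead give a direct proof via the explicit hub-and-spokes decomposition $D=\bigcup_{(x,\al)\in D}\bigl[(D_{f_D}\times\{f_D\})\cup(\{x\}\times[f_D,\al])\bigr]$ and the standard lemma that a union of connected sets with a common nonempty part is connected. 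Both arguments rest on the same two structural facts --- the hub is connected and lies in $D$, and every point of $D$ is joined to the hub by a vertical segment inside $D$ (your preliminary observation, which the paper uses implicitly when it asserts $(x,f_D)\in A$ and $\{x\}\times[f_D,\gamma)\subseteq D$). Your version is shorter and avoids the case analysis at the level $\gamma$; the paper's version is self-contained in that it only manipulates the definition of separatedness rather than citing the union lemma. Your observation that $f_D$ is attained (so the hub is nonempty) is the one point that must not be skipped, and you handle it correctly.
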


\begin{proof}
\textbf{Sufficiency}.
    We proceed by contradiction. Assume that $D_{f_D}$ is connected in $X$.
If
$D$ is not connected in $X \times [0,1]$,
then there exists two nonempty sets
$A$ and $B$ in $X \times [0,1]$ such that
$A \cup B = D$,
$A \cap \overline{B} = \emptyset$
and
 $B \cap \overline{A} = \emptyset$.

Note that $D_{f_D} \times \{f_D\}   \subseteq D$ and $D_{f_D} \times \{f_D\}$ is connected.
Hence
$D_{f_D} \times \{f_D\} \subseteq A$ or $D_{f_D} \times \{f_D\} \subseteq B$.
Without loss
of
generality, we suppose that $D_{f_D} \times \{f_D\} \subseteq A$.

Pick $(x,\al) \in B$. Set $\gamma = \inf\{\beta:   (x,\beta) \in B   \}$.

If
$(x, \gamma) \in B$, we claim that $\gamma > f_D$.
Otherwise $\gamma = f_D$
and
$(x, f_D) \in B$.
Note
that $(x, f_D) \in A$.
Thus  $A\cap B \not= \emptyset$, which is a contradiction.
Hence
$(x, \xi) \in A$ for $\xi\in [f_D, \gamma)$,
and therefore
$(x, \gamma) = \lim_{\xi\to \gamma-} (x,\xi)\in \overline{A}$.
So
$\overline{A}\cap B \not= \emptyset$. This is a contradiction.

If
$(x, \gamma) \in A$, then there is a sequence $\{(x, \gamma_n)\}$ in $B$
such that
$\lim_{n\to\infty} \gamma_n = \gamma$.
Hence $(x, \gamma) = \lim_{n\to \infty} (x, \gamma_n) \in \overline{B}$.
Thus
$A \cap \overline{B} \not= \emptyset$. This is a contradiction.

\vspace{3mm}
\textbf{Necessity}.
Assume that
$D$ is connected in $X \times [0,1]$.
Since $p_X$ is continuous, hence
$D_{f_D} = p_X(D)$  is connected in $X$.

\end{proof}

\begin{tl}
  \label{cptmeg}
  Let
 $E$ be a nonempty set in $X\times [0,1]$ with
$E_r \supseteq E_t$ for $0 \leq t \leq r \leq S_E$.
Then
 $E_{S_E}$ is connected in $(X,d)$
if and only if
 $E$ is connected in $(X \times [0,1], \overline{d})$
\end{tl}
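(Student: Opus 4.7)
The strategy is to deduce Corollary~\ref{cptmeg} from Proposition~\ref{cptm} by flipping the height coordinate: the hypothesis $E_r \supseteq E_t$ for $0 \leq t \leq r \leq S_E$ says the cuts grow upward to the top level $S_E$, while Proposition~\ref{cptm} concerns cuts that shrink upward from the bottom level $f_D$. Reflecting $\alpha \mapsto S_E - \alpha$ converts the former picture into the latter, and since reflection is an isometry, it preserves connectedness of both $E$ and of each horizontal slice.

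Concretely, I would introduce the map $\phi : X \times [0, S_E] \to X \times [0, S_E]$ given by $\phi(x, \alpha) = (x, S_E - \alpha)$; a routine check shows $\overline{d}(\phi(p), \phi(q)) = \overline{d}(p, q)$, so $\phi$ is an isometric involution, hence a homeomorphism. Setting $D := \phi(E) \subseteq X \times [0, S_E] \subseteq X \times [0, 1]$, one directly obtains $D_\alpha = E_{S_E - \alpha}$ for $\alpha \in [0, S_E]$ and $D_\alpha = \emptyset$ for $\alpha > S_E$, together with $f_D = S_E - S_E = 0$ and $D_{f_D} = D_0 = E_{S_E}$. The monotonicity hypothesis $D_r \subseteq D_t$ for $0 \leq t \leq r \leq 1$ required by Proposition~\ref{cptm} then follows at once by splitting into the case $r \leq S_E$ (apply the corollary's hypothesis at the indices $S_E - r \leq S_E - t$ in $[0, S_E]$) and the case $r > S_E$ (trivial, since $D_r = \emptyset$).

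Applying Proposition~\ref{cptm} to $D$ then yields that $D_{f_D}$ is connected in $(X, d)$ if and only if $D$ is connected in $(X \times [0,1], \overline{d})$. Translating back through the homeomorphism $\phi$, this is precisely the assertion that $E_{S_E}$ is connected in $(X, d)$ if and only if $E$ is connected in $(X \times [0,1], \overline{d})$, which is Corollary~\ref{cptmeg}. The only point requiring genuine care, and the main (mild) obstacle, is to confirm that $E_{S_E}$ is nonempty so that the conclusion has content and that $\phi$ can be legitimately restricted to $X \times [0, S_E]$: picking any $(x, \alpha) \in E$, which is possible since $E$ is nonempty, gives $\alpha \leq S_E$, whence $x \in E_\alpha \subseteq E_{S_E}$ by the corollary's hypothesis, so $E_{S_E} \neq \emptyset$ and correspondingly $D_{f_D} \neq \emptyset$. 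Everything else is routine bookkeeping that rides on the isometric nature of $\phi$.
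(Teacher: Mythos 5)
Your proposal is correct and follows essentially the same route as the paper: both reduce to Proposition \ref{cptm} by reflecting the height coordinate via an isometry (the paper uses $\al \mapsto 1-\al$, you use $\al \mapsto S_E - \al$, which only shifts $f_D$ from $1-S_E$ to $0$ but changes nothing essential). Your extra observation that $E_{S_E} \neq \emptyset$ under the hypotheses is a correct and welcome detail.
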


\begin{proof}
  Let $D=\{(x,1-\al): (x,\al) \in E\}$. Then $D$ is a nonempty set in $X\times [0,1]$ with
$D_r \subseteq D_t$ for $f_D \leq t \leq r \leq 1$. Hence by Proposition \ref{cptm},
 $D_{f_D}$ is connected in $(X,d)$
if and only if
 $D$ is connected in $(X \times [0,1], \overline{d})$.

Define $f: (E, \overline{d}) \to (D, \overline{d})$ as follows: $f(x,\al) = (x, 1-\al)$ for $(x,\al) \in E$.

Observe that $E_{S_E} = D_{f_D}$,
so to verify the desired result it suffices to show that
$D$ is connected in $(X \times [0,1], \overline{d})$ if and only if $E$ is connected
in $(X \times [0,1], \overline{d})$, which
follows
from that $f$ is an isometry and $f(E) =D$.

The desired conclusion can also be proved in a similar manner as that of Proposition \ref{cptm}.

\end{proof}

\begin{re}
  {\rm
Here we mention that $D_{S_D} = \emptyset$ is possible when $D$ is connected in $X\times [0,1]$ and satisfies the assumption in Proposition \ref{cptm}.
Let $X=\mathbb{R}$ and
define $D \subset \mathbb{R} \times [0,1]$ by putting
\[D_\al =\left\{
    \begin{array}{ll}
  \emptyset, & \al=1,\\
(0, 1-\alpha], & \al\in [0,1).
      \end{array}
  \right.
\]
Then $D$ is a such example.

Similarly, $E_{f_E} = \emptyset$ is possible when $E$ is connected in $X\times [0,1]$ and satisfies the assumption in Corollary \ref{cptmeg}.
}
\end{re}

Let $u\in F(X)$ and $0\leq r \leq t\leq 1$.
We use the symbol $\bm{{\rm end}_r^t\, u }$ to denote
the subset of ${\rm end}\, u $ given by
$$
{\rm end}_r^t\, u  :=    {\rm end}\, u \cap  ([u]_r \times [r, t] ).$$
For simplicity, we write
${\rm end}_r^1\, u  $ as ${\rm end}_r\, u  $.
We can see that ${\rm end}_0\, u  = {\rm send}\, u$.

\begin{tl} \label{cdpu}
    Let $u\in  F(X)$.
\item (\romannumeral1) For $r,t$ with $0 \leq r\leq t \leq 1$,
${\rm end}_{r}^t\, u$ is connected in $X \times [0,1]$ if and only if $[u]_r$ is connected in $X$.

\item (\romannumeral2)
$X$ is connected
if and only if
${\rm end}\, u$ is connected in $X \times [0,1]$.
\end{tl}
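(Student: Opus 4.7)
The plan is to deduce both parts directly from Proposition \ref{cptm} by choosing the right subset $D \subseteq X \times [0,1]$ and verifying its two hypotheses: nonemptiness and the monotonicity condition $D_r \subseteq D_t$ for $f_D \leq t \leq r \leq 1$. In each case the slice $D_s$ will be an $\alpha$-cut of $u$ (or all of $X$), so the required monotonicity is just the standard fact that $[u]_s$ is non-increasing in $s$. The identification of $D_{f_D}$ with $[u]_r$ in (i) and with $X$ in (ii) then delivers the claim.

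For Part (i), I would set $D := {\rm end}_r^t\, u$. A direct computation shows that for $s \in [r,t]$,
\[
D_s = \{x \in [u]_r : u(x) \geq s\} = [u]_s,
\]
the last equality because $s \geq r$ forces $[u]_s \subseteq [u]_r$, and $D_s = \emptyset$ otherwise. If $[u]_r = \emptyset$, then both ${\rm end}_r^t\, u$ and $[u]_r$ are empty, hence connected (consistent with the convention used in defining $F_{CON}(X)$ in Section 2), so the equivalence holds trivially. Otherwise $D$ contains $[u]_r \times \{r\}$, so $D$ is nonempty with $f_D = r$ and $D_{f_D} = [u]_r$. The slices $\{D_s\}_{s \in [r,t]} = \{[u]_s\}_{s \in [r,t]}$ are non-increasing in $s$, so Proposition \ref{cptm} applies and yields the stated equivalence.

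For Part (ii), I would set $D := {\rm end}\, u$. Since $X \times \{0\} \subseteq {\rm end}\, u$, the set $D$ is nonempty with $f_D = 0$ and
\[
D_0 = \{x \in X : u(x) \geq 0\} = X.
\]
Moreover $D_s = [u]_s$ for $s \in (0,1]$, and together with $D_0 = X$ these form a non-increasing family of subsets of $X$. Thus the hypothesis of Proposition \ref{cptm} is met, and it gives that ${\rm end}\, u$ is connected in $X \times [0,1]$ if and only if $X = D_{f_D}$ is connected in $X$.

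The only mild subtlety is the boundary bookkeeping: keeping track of the case where the relevant cut is empty, and noting that $D_0 = X$ in Part (ii) rather than $[u]_0$ (which would differ when $u$ does not have full support). I do not foresee a real obstacle, since the entire argument reduces to picking the correct $D$ and invoking the previously established Proposition \ref{cptm}.
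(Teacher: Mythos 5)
Your proposal is correct and follows essentially the same route as the paper: both parts are reduced to Proposition \ref{cptm} by taking $D={\rm end}_r^t\,u$ (handling the empty case separately) and $D={\rm end}\,u$ with $f_D=0$ and $({\rm end}\,u)_0=X$, respectively. Your write-up merely spells out the slice computation $D_s=[u]_s$ and the monotonicity check in more detail than the paper does.
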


\begin{proof}

If ${\rm end}_{r}^t\, u \not= \emptyset$, then, by Proposition \ref{cptm}, the conclusion in (\romannumeral1) is true.
Thus it suffices to consider
the case when
 ${\rm end}_{r}^t\, u = \emptyset$.
In this case,
 $[u]_r = \emptyset$, and
so clearly the conclusion in
(\romannumeral1) is true.

Note that $f_{{\rm end}\, u} = 0$ and $({\rm end}\, u)_0 = X$. So
(\romannumeral2) follows immediately
from Proposition \ref{cptm}.

\end{proof}

The following Examples \ref{dcn} and \ref{ucn} give some connected sets in $\mathbb{R} \times [0,1]$.
 Proposition \ref{cptm}, Corollary \ref{cptmeg} and Corollary \ref{cdpu} are used
to show the connectedness of these sets.

\begin{eap} \label{dcn}
  {\rm
Let $D \subset \mathbb{R} \times [0,1]$ be defined by putting
\[
D_\al=
\left\{
  \begin{array}{ll}
   [1-\alpha^2, 1] \cup [3, 4-\al^2], & \al \in (0.5, 1],\\
  \mbox{} [0,4], &  \al\in [0,0.5].
  \end{array}
\right.
\]
We can see that
$D= A \cup B \cup C$, where
\begin{gather*}
  A = [0,4] \times [0,0.5],\\
B=  \bigcup_{\al\in [0.5, 1]}  [1-\alpha^2, 1] \times \{\al\},\\
C= \bigcup_{\al\in [0.5, 1]}  [3, 4-\al^2] \times \{\al\}.
\end{gather*}
Clearly $A$ is connected in $\mathbb{R}\times [0,1]$.
By Corollary \ref{cptmeg}, $B$ is connected in $\mathbb{R}\times [0,1]$.
By Proposition \ref{cptm}, $C$ is connected in $\mathbb{R}\times [0,1]$.
$A \cap B =  [1-0.5^2, 1] \times \{0.5\}\not=\emptyset$.
$A \cap C =  [3, 4-0.5^2] \times \{0.5\}\not=\emptyset$.
Thus
$D$ is connected in  $\mathbb{R}\times [0,1]$.

Here we mention that there is no $u\in F(\mathbb{R})$ satisfying $D = {\rm send}\, u$
because
$D_1 \nsubseteqq D_{0.9}$.

}
\end{eap}

\begin{eap}\label{ucn}
  {\rm
Let $u\in F^1_{USC}(\mathbb{R})$ be defined by putting:
\[
[u]_\al=
\left\{
  \begin{array}{ll}
   [0,1] \cup [3,4], & \al \in (0.6, 1],\\
  \mbox{} [0,4], &  \al\in [0,0.6].
  \end{array}
\right.
\]

We can see that
 $[0,1] \cup [3,4]$ is not connected,
[0,4] and $\mathbb{R}$ are connected.
So
 $u\in F^1_{USC}(\mathbb{R})  \setminus   F_{USCCON}(\mathbb{R})  $.
By Corollary \ref{cdpu},
${\rm end}\, u$ is connected in $\mathbb{R} \times [0,1]$;
${\rm end}_{r}\, u$ is connected in $\mathbb{R} \times [0,1]$ if and only if $r\in  [0,0.6]$.

}
\end{eap}

\begin{pp}\label{cmpugn}
Let $D$ be a subset of $X\times [0,1]$.
\\
(\romannumeral1) \ If
$D$ is compact in $X\times [0,1]$, then $p_X(D)$ is compact in $X$.
\\
(\romannumeral2) \ If $D$ is closed in $X\times [0,1]$ and $p_X(D)$ is compact in $X$,
then $D$ is compact in $X\times [0,1]$.
\\
(\romannumeral3) \ Let $D$ be closed in $X\times [0,1]$. Then
$D$ is compact in $X\times [0,1]$ if and only if $p_X(D)$ is compact in $X$.
\end{pp}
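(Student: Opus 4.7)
The plan is to use the continuity of $p_X$ (already established in the excerpt) together with the standard facts that continuous images of compact sets are compact, finite products of compact metric spaces are compact, and closed subsets of compact sets are compact. Each part is essentially a one-step reduction.

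For part (i), I would simply invoke the stated property (i) of continuous maps from the excerpt: since $p_X : (X \times [0,1], \overline{d}) \to (X, d)$ is continuous, the image $p_X(D)$ of the compact set $D$ is compact in $(X,d)$.

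For part (ii), the key observation is the inclusion $D \subseteq p_X(D) \times [0,1]$. I would first show that $p_X(D) \times [0,1]$ is compact in $(X \times [0,1], \overline{d})$. This follows from sequential compactness: given any sequence $\{(x_n,\alpha_n)\}$ in $p_X(D) \times [0,1]$, the compactness of $p_X(D)$ yields a subsequence $x_{n_k} \to x \in p_X(D)$, and then the compactness of $[0,1]$ yields a further subsequence $\alpha_{n_{k_j}} \to \alpha \in [0,1]$; by the definition of $\overline{d}$, this gives $(x_{n_{k_j}}, \alpha_{n_{k_j}}) \to (x,\alpha)$ in $p_X(D) \times [0,1]$. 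Once $p_X(D) \times [0,1]$ is known to be compact, I use that $D$ is closed in $X \times [0,1]$, hence closed in the subspace $p_X(D) \times [0,1]$, so $D$ is a closed subset of a compact set and is therefore compact.

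Part (iii) is then the immediate conjunction of (i) and (ii) under the standing hypothesis that $D$ is closed.

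I do not anticipate any real obstacle. The only subtlety to keep in mind is making sure the metric $\overline{d}$ restricted to $p_X(D) \times [0,1]$ induces the product topology, which is transparent from the formula $\overline{d}((x,\alpha),(y,\beta)) = d(x,y) + |\alpha-\beta|$; this is what makes the componentwise sequential argument legitimate.
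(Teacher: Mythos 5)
Your proposal is correct and follows essentially the same route as the paper: (i) via continuity of $p_X$, (ii) by observing $D$ is a closed subset of the compact product $p_X(D)\times[0,1]$ (the paper uses $p_X(D)\times[f_D,S_D]$ instead, an immaterial difference), and (iii) as the conjunction of the two. The extra sequential-compactness argument for the product and the remark about $\overline{d}$ inducing the product topology are fine but just make explicit what the paper leaves implicit.
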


\begin{proof}
  (\romannumeral1) holds obviously.

To show (\romannumeral2), let $D$ be a closed set in $X\times [0,1]$ with
$p_X(D)$ being compact in $X$.
If $D=\emptyset$, then clearly $D$ is compact in $X\times [0,1]$.
If $D\not=\emptyset$,
then
$p_X(D) \times [f_D, S_D]$ is compact in $X\times [0,1]$.
Since $D$ is closed in $X\times [0,1]$ and $D\subseteq p_X(D) \times [f_D, S_D]$,
it follows that
$D$ is compact in $X\times [0,1]$. So (\romannumeral2) is true.

  (\romannumeral3) is an immediate consequence of (\romannumeral1) and (\romannumeral2).

\end{proof}

\begin{tl}\label{cmpu}
Let $u\in F_{USC}(X)$ and $0 \leq r\leq t \leq 1$. Then ${\rm end}_r^t\, u$ is compact in $X\times [0,1]$
if and only if
$[u]_r$ is compact in $X$.
\end{tl}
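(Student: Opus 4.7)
The plan is to reduce the problem to Proposition \ref{cmpugn}(\romannumeral3) by identifying ${\rm end}_r^t\, u$ as a closed subset of $X\times [0,1]$ whose projection onto $X$ coincides with $[u]_r$.

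First I would verify that ${\rm end}_r^t\, u$ is closed in $(X\times [0,1],\overline{d})$. Since $u\in F_{USC}(X)$, the equivalence (\romannumeral1)$\Leftrightarrow$(\romannumeral2) established earlier gives that ${\rm end}\, u$ is closed. Also $[u]_r$ lies in $C(X)\cup\{\emptyset\}$, so $[u]_r\times[r,t]$ is closed in $X\times[0,1]$, and therefore their intersection ${\rm end}_r^t\, u$ is closed. This places us in the hypothesis of Proposition \ref{cmpugn}(\romannumeral3), which asserts that for closed sets $D\subseteq X\times[0,1]$, compactness of $D$ is equivalent to compactness of $p_X(D)$.

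Next I would compute $p_X({\rm end}_r^t\, u)$ directly from the definition. If $x\in[u]_r$, then $u(x)\geq r$, so $(x,r)\in {\rm end}\, u\cap([u]_r\times[r,t])={\rm end}_r^t\, u$, giving $x\in p_X({\rm end}_r^t\, u)$. Conversely, any element of $p_X({\rm end}_r^t\, u)$ is the first coordinate of a point in $[u]_r\times[r,t]$, hence lies in $[u]_r$. Thus $p_X({\rm end}_r^t\, u)=[u]_r$, and the equivalence in Proposition \ref{cmpugn}(\romannumeral3) translates directly into the claim. The empty case ($[u]_r=\emptyset$, forcing ${\rm end}_r^t\, u=\emptyset$) is handled uniformly since both sides are trivially compact.

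There is no real obstacle here: the argument is a two-line reduction once Proposition \ref{cmpugn}(\romannumeral3) is in hand and once the upper semicontinuity of $u$ is used to ensure closedness. The only minor point worth noting is that the identification $p_X({\rm end}_r^t\, u)=[u]_r$ uses the fact that $r$ itself belongs to $[r,t]$, so that $(x,r)$ is a valid witness for each $x\in[u]_r$; this is automatic from $r\leq t$.
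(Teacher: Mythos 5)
Your argument is correct and is essentially the same as the paper's: the paper likewise observes that ${\rm end}_r^t\, u = {\rm end}\, u \cap ([u]_r \times [r,t])$ is closed with $p_X({\rm end}_r^t\, u) = [u]_r$ and then invokes Proposition \ref{cmpugn}(\romannumeral3). Your extra checks (the witness $(x,r)$ and the empty case) are fine but just make explicit what the paper leaves implicit.
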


\begin{proof}
  We can see that ${\rm end}_r^t\, u = {\rm end}\, u \cap ([u]_r \times [r,t])$ is closed in $X\times [0,1]$,
and
$p_X({\rm end}_r^t\, u) = [u]_r$.
So the desired result follows immediately from Proposition \ref{cmpugn} (\romannumeral3).

\end{proof}

By Corollary \ref{cmpu},
for $u\in F_{USCG}(X)$ and $r \in (0, 1]$,
${\rm end}_r\, u$ is a compact set in $X \times [0,1]$.

We will use the following conclusion.

Let $(Y,\rho)$ be a metric space, $x,y\in Y$ and $W\subseteq Y$.
Then
\begin{align} \label{uen}
    \rho(x,W) &  =\inf_{z\in W}   \rho(x,z)  \nonumber \\
&  \leq \inf_{z\in W}  \{\rho(x,y) + \rho(y,z) \} \nonumber
\\
&=\rho(x,y) + \rho(y,W).
        \end{align}

We say that a sequence $\{u_n, n=1,2,\ldots\}$ in $F_{USC} (\mathbb{R}^m) $ satisfies \textbf{\emph{connectedness condition}}
if for each $\varepsilon>0$, there is a $\delta \in (0,\varepsilon]$ and $N(\varepsilon) \in \mathbb{N}$
such that for all $n\geq N$,
$ {\rm end}_{\delta}\, u_n$ is connected in $\mathbb{R}^m \times [0,1]$.

\begin{re}
  {\rm

From clause (\romannumeral1) of Corollary \ref{cdpu},
it follows that
   for $u\in  F(X)$ and
 $r\in [0, 1]$,
${\rm end}_{r}\, u$ is connected in $X \times [0,1]$ if and only if $[u]_r$ is connected in $X$.
So
for a sequence $\{u_n: n\in \mathbb{N}\}$ in $F_{USC}(\mathbb{R}^m)$,
the condition that $\{u_n\}$ is in $F_{USCCON}(\mathbb{R}^m)$ implies the condition that $\{u_n\}$
 satisfies
the
 connectedness condition, however, the converse is false.
In fact, from
 clause (\romannumeral1) of Corollary \ref{cdpu}, it can be seen that we have a wide choice
of sequences in $F_{USC} (\mathbb{R}^m)$
which
 satisfy
the
 connectedness condition, but none of their members belongs to $F_{USCCON} (\mathbb{R}^m)$.
See
Examples \ref{wcre} and \ref{wcrepu}.
}
\end{re}

Now we arrive at the main results of this paper.

Compared to
 the corresponding result in \cite{huang} (Theorem \ref{com} in this paper),
the following Theorem \ref{fceg}
 greatly relaxes the prerequisite required for the compatibility of the endograph metric $H_{\rm end}$ and the $\Gamma$-convergence.

In Theorem \ref{com}, the sequence $\{u_n\}$ is required to be in $F_{USCGCON} (\mathbb{R}^m)$.
The following Theorem \ref{fceg}
shows that
not only
 the condition of the compactness of the positive $\al$-cuts
of the sequence
$\{u_n\}$ is not needed,
but also
 the condition of the connectedness
of
  all the positive $\al$-cuts
of the sequence
$\{u_n\}$ can be replaced by the condition of $\{u_n\}$ satisfies the connectedness condition.

\begin{tm} \label{fceg}
Let $u \in F_{USCG} (\mathbb{R}^m) \setminus \{\emptyset_{F(\mathbb{R}^m)}\} $,
and let
 $\{u_n, n=1,2,\ldots\}$ be a fuzzy set sequence in $F_{USC} (\mathbb{R}^m)$ which satisfies the connectedness condition.
Then $H_{\rm end} (u_n, u) \to 0$ as $n \to \infty$ if and only if $\lim_{n\to\infty}^{(\Gamma)} u_n = u$.
\end{tm}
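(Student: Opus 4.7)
The direction $H_{\rm end}(u_n, u) \to 0 \Rightarrow \lim_{n\to\infty}^{(\Gamma)} u_n = u$ is routine: by Theorem \ref{hkg} and Remark \ref{hmr}, Hausdorff convergence of endographs implies their Kuratowski convergence, which is the very definition of $\Gamma$-convergence; no connectedness hypothesis is needed here.

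For the converse I would argue by contradiction, in the spirit of Proposition \ref{cne} but working directly with endographs in $\mathbb{R}^m\times[0,1]$ rather than passing to $\alpha$-cuts. Since $u\neq\emptyset_{F(\mathbb{R}^m)}$, fix some $(y,\mu)\in{\rm end}\, u$ with $\mu>0$. Assume $\lim_{n\to\infty}^{(\Gamma)} u_n=u$ but $H_{\rm end}(u_n,u)\not\to 0$; pick $\varepsilon>0$ and a subsequence $\{n_k\}$ with $H({\rm end}\, u_{n_k},{\rm end}\, u)>4\varepsilon$, shrinking $\varepsilon$ so that $4\varepsilon<\mu$. Along a further subsequence either $H^*({\rm end}\, u,{\rm end}\, u_{n_k})>4\varepsilon$ or $H^*({\rm end}\, u_{n_k},{\rm end}\, u)>4\varepsilon$ holds throughout. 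In the first (easier) case, a witness $(a_k,b_k)\in{\rm end}\, u$ has $b_k\geq 4\varepsilon$ (from $(a_k,0)\in{\rm end}\, u_{n_k}$), so $a_k\in[u]_{4\varepsilon}$, which is compact as $u\in F_{USCG}(\mathbb{R}^m)$. A convergent subsequence has limit $(a_*,b_*)\in{\rm end}\, u\subseteq\liminf_{n\to\infty}{\rm end}\, u_n$, and Lemma \ref{infe}(i) combined with the triangle inequality \eqref{uen} forces $\overline{d}((a_{k_l},b_{k_l}),{\rm end}\, u_{n_{k_l}})\to 0$, contradicting the lower bound $4\varepsilon$.

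The delicate case is the second: a witness $(x_k,\alpha_k)\in{\rm end}\, u_{n_k}$ has $\alpha_k\geq 4\varepsilon$, but $x_k$ need not lie in any compact set. If $\{x_k\}$ has a bounded subsequence, the analogous limiting argument (using $\limsup_{n\to\infty}{\rm end}\, u_n\subseteq{\rm end}\, u$ and continuity of $g(z,t):=\overline{d}((z,t),{\rm end}\, u)$) produces a point of ${\rm end}\, u$ where $g\geq 4\varepsilon$, which is impossible. The main obstacle, and the place where the connectedness condition enters, is the remaining sub-case $|x_k|\to\infty$. Apply the connectedness condition to this $\varepsilon$ to obtain $\delta\in(0,\varepsilon]$ and $N$ such that ${\rm end}_\delta\, u_n$ is connected for all $n\geq N$. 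Since $\delta\leq\varepsilon<\mu$, the $\liminf$ inclusion applied to $(y,\mu)$ yields $(y_n,\mu_n)\in{\rm end}\, u_n$ with $(y_n,\mu_n)\to(y,\mu)$, and for large $n$ these sit in ${\rm end}_\delta\, u_n$; likewise $(x_k,\alpha_k)\in{\rm end}_\delta\, u_{n_k}$ for large $k$.

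Fix $R_0$ with $[u]_\delta\subseteq B(0,R_0)$ (possible since $u\in F_{USCG}(\mathbb{R}^m)$ makes $[u]_\delta$ compact), and any $R>R_0+d_m(y,0)+1$. Apply the intermediate value theorem to the continuous function $\phi(z,t):=d_m(z,0)$ on the connected set ${\rm end}_\delta\, u_{n_k}$: for large $k$ one has $\phi(y_{n_k},\mu_{n_k})\to d_m(y,0)<R$ and $\phi(x_k,\alpha_k)=d_m(x_k,0)\to\infty$, so the image of $\phi$ is a real interval containing $R$, producing $(z_k,\eta_k)\in{\rm end}_\delta\, u_{n_k}$ with $d_m(z_k,0)=R$ and $\eta_k\geq\delta$. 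The sequence $\{(z_k,\eta_k)\}$ lies in the compact set $\overline{B(0,R)}\times[\delta,1]$, so a convergent subsequence has a limit $(z_*,\eta_*)$; by $\Gamma$-convergence $(z_*,\eta_*)\in\limsup_{n\to\infty}{\rm end}\, u_n\subseteq{\rm end}\, u$, whence $z_*\in[u]_{\eta_*}\subseteq[u]_\delta\subseteq B(0,R_0)$, contradicting $d_m(z_*,0)=R>R_0$. The crux is precisely this escape-to-infinity sub-case: $\Gamma$-convergence alone does not rule it out, but connectedness of ${\rm end}_\delta\, u_{n_k}$ forces a witness at every prescribed distance from the origin, and the compact positive cut $[u]_\delta$ then catches that witness in the limit.
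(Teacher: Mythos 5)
Your proof is correct and follows essentially the same route as the paper's: argue by contradiction, split on the two one-sided Hausdorff excesses, use compactness of ${\rm end}_\varepsilon\, u$ (via $u\in F_{USCG}(\mathbb{R}^m)$) in the first case, and in the second case combine the connectedness condition with the intermediate value theorem to trap a witness in a compact region that the Kuratowski upper limit then rules out. The only cosmetic difference is that the paper avoids your bounded/unbounded sub-case split by applying the IVT to the function $(z,\zeta)\mapsto\overline{d_m}((z,\zeta),{\rm end}_\xi\, u)$ rather than to the distance to the origin, producing a point at distance exactly $\xi$ from the compact set ${\rm end}_\xi\, u$ and hence an immediate contradiction.
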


\begin{proof}
   From Theorem \ref{hkg}, $H_{\rm end} (u_n, u) \to 0   \Rightarrow \lim_{n\to\infty}^{(\Gamma)} u_n = u$.

Now we show that $\lim_{n\to\infty}^{(\Gamma)} u_n = u \Rightarrow H_{\rm end} (u_n, u) \to 0$.
We prove by contradiction.
Assume that $\lim_{n\to\infty}^{(\Gamma)} u_n = u$ but $H_{\rm end} (u_n, u) \not\to 0$.
Then
$H^*({\rm end}\, u, {\rm end}\, u_n) \not\to 0$ or $H^*( {\rm end}\, u_n, {\rm end}\, u) \not\to 0$.
We split the proof into two cases.

\emph{Case} (\romannumeral1) $H^*({\rm end}\, u, {\rm end}\, u_n) \not\to 0$.

In this case, there is an $\varepsilon>0$ and a subsequence $\{u_{n_k}\}$ of $\{u_n\}$
such that
$H^*({\rm end}\, u, {\rm end}\, u_{n_k}) > \varepsilon$.
Thus for each $k=1,2,\ldots$ there exists
$(x_k, \al_k) \in {\rm end}\, u$ such that $\overline{d_m}((x_k, \al_k), {\rm end}\, u_{n_k}) > \varepsilon$.
Note that for each $k=1,2,\ldots$
$\al_k > \varepsilon$. So $\{(x_k, \al_k), \ k=1,2,\ldots\} \subseteq   {\rm end}_\varepsilon\, u$.

Since ${\rm end}_\varepsilon\, u $ is compact,
there is a subsequence $\{(x_{k_l}, \al_{k_l})\}$ of $\{(x_k, \al_k)\}$ which converges to $(x, \al) \in {\rm end}_\varepsilon\, u \subset {\rm end}\, u$.
Then
there is a $L(\varepsilon)$ such that
$\overline{d_m}((x, \al), (x_{k_l}, \al_{k_l})) < \varepsilon/2$ for all $l\geq L$. Hence,
by \eqref{uen},
$\overline{d_m}((x, \al), {\rm end}\, u_{n_{k_l}}) \geq \overline{d_m}((x_{k_l}, \al_{k_l}), {\rm end}\, u_{n_{k_l}}) - \overline{d_m}((x, \al), (x_{k_l}, \al_{k_l})) > \varepsilon/2$. By Lemma \ref{infe} (\romannumeral1), this contradicts
$(x, \al) \in {\rm end}\, u = \liminf_{n\to\infty} {\rm end}\, u_n$.

\emph{Case} (\romannumeral2) $H^*( {\rm end}\, u_n, {\rm end}\, u) \not\to 0$.

In this case,
 there is an $\varepsilon>0$ and a subsequence $\{u_{n_k}\}$ of $\{u_n\}$
such that
\begin{equation}\label{gem}
H^*({\rm end}\, u_{n_k}, {\rm end}\,u) > \varepsilon.
\end{equation}

Take $y \in \mathbb{R}^m$ with $u(y)>0$.
Set $\mu=u(y)$.
Since
$\{u_n\}$
 satisfies the connectedness condition, there is a $\xi\in (0, \min\{\mu, \varepsilon\})$
and $N_1 \in \mathbb{N}$
such that
${\rm end}_\xi \,u_n$ is connected in $\mathbb{R}^m \times [0,1]$
for all $n \geq N_1$.

Firstly we show the conclusions in the following clauses (\uppercase\expandafter{\romannumeral1}), (\uppercase\expandafter{\romannumeral2}) and (\uppercase\expandafter{\romannumeral3}).
\begin{description}
  \item[(\uppercase\expandafter{\romannumeral1})]
For each $n_k$, $k=1,2,\ldots$, there exists
$(x_{n_k}, \al_{n_k}) \in {\rm end}_\xi\, u_{n_k}$ with
$\overline{d_m}((x_{n_k}, \al_{n_k}), {\rm end}_\xi \, u) > \xi$.

  \item[(\uppercase\expandafter{\romannumeral2})]
There is an $N_2 \in \mathbb{N}$ such that for each $n\geq N_2$, there exists $(y_n, \beta_n) \in {\rm end}_\xi \,u_n$
with
$\overline{d_m}((y_n, \beta_n), {\rm end}_\xi\,u ) < \xi $.

  \item[(\uppercase\expandafter{\romannumeral3})]
Set $N_3 := \max\{N_1, N_2\}$.
For each $n_k \geq N_3$,
 there exists $(z_{n_k}, \gamma_{n_k}) \in {\rm end}_\xi \, u_{n_k}$ such that
\begin{equation}\label{eung}
\overline{d_m}((z_{n_k}, \gamma_{n_k}), {\rm end}_\xi\, u) = \xi.
\end{equation}

\end{description}

To show (\uppercase\expandafter{\romannumeral1}), let $k\in \mathbb{N}$.
From \eqref{gem},
 there exists
$(x_{n_k}, \al_{n_k}) \in {\rm end}\, u_{n_k}$ such that
$$
\overline{d_m}((x_{n_k}, \al_{n_k}), {\rm end}_\xi \, u) \geq \overline{d_m}((x_{n_k}, \al_{n_k}), {\rm end}\, u) > \varepsilon > \xi.
$$
Clearly
$\al_{n_k} > \varepsilon$, and then
$(x_{n_k}, \al_{n_k}) \in {\rm end}_\varepsilon \, u_{n_k} \subset  {\rm end}_\xi \, u_{n_k} $.
Thus (\uppercase\expandafter{\romannumeral1}) is true.

As
 $(y, \mu) \in {\rm end}\,u$ and ${\rm end}\,u = \liminf_{n\to\infty} {\rm end}\, u_n$,
 we can find a sequence $\{(y_n, \beta_n)\}$
satisfying that
$(y_n, \beta_n) \in {\rm end}\, u_n$ for
$n=1,2,\ldots$
and
$\{(y_n, \beta_n)\}$ converges to $(y, \mu)$.

Hence
 there is a $N_2$ such that for all $n\geq N_2$,
$\overline{d_m}((y_n, \beta_n), (y, \mu)) < \min\{\mu-\xi, \, \xi\}$.
Let $n\in \mathbb{N}$ with $n\geq N_2$.
Then
$\beta_n > \xi$,
and therefore $(y_n, \beta_n) \in {\rm end}_\xi \,u_n$.
Note that $(y, \mu) \in  {\rm end}_\xi\,u$. So $\overline{d_m}((y_n, \beta_n), {\rm end}_\xi \,u ) \leq \overline{d_m}((y_n, \beta_n), (y, \mu)) < \xi$.
Thus (\uppercase\expandafter{\romannumeral2}) is true.

To show (\uppercase\expandafter{\romannumeral3}),
let $k\in \mathbb{N}$ with $n_k \geq N_3$.
Define a function $f_k$ from $({\rm end}_\xi \,u_{n_k}, \overline{d_m})$ to $\mathbb{R}$ as follows:
$$f_k (z,\zeta) =\overline{d_m}((z,\zeta), {\rm end}_\xi\, u) \mbox{ for } (z,\zeta) \in {\rm end}_\xi \, u_{n_k}.$$

By \eqref{uen},
$|f_k (z,\zeta) - f_k (z',\zeta')| \leq \overline{d_m}((z,\zeta), (z',\zeta'))$ for $(z,\zeta), (z',\zeta')$ in ${\rm end}_\xi \, u_{n_k}$.
Thus
$f_k$  is a continuous function on ${\rm end}_\xi \, u_{n_k}$.

Note that
${\rm end}_\xi \, u_{n_k}$ is a connected set in $\mathbb{R}^m$.
Thus
$f_k ({\rm end}_\xi \, u_{n_k})$
 is a connected set in $\mathbb{R}$; that is, $f_k ({\rm end}_\xi \, u_{n_k})$ is an interval.
Combined this fact with the above
clauses (\uppercase\expandafter{\romannumeral1}) and (\uppercase\expandafter{\romannumeral2}), we obtain
that there exists $(z_{n_k}, \gamma_{n_k}) \in {\rm end}_\xi \, u_{n_k}$ with
$
\overline{d_m}((z_{n_k}, \gamma_{n_k}), {\rm end}_\xi\, u) = \xi.
$
Thus (\uppercase\expandafter{\romannumeral3}) is true.

Now using (\uppercase\expandafter{\romannumeral3}), we can obtain a contradiction.
From \eqref{eung} and the compactness of
${\rm end}_\xi\, u$, the set
 $\{(z_{n_k}, \gamma_{n_k}), n_k \geq N_3   \}$ is bounded in $\mathbb{R}^m \times [0,1]$, and thus $\{ (z_{n_k}, \gamma_{n_k}) , n_k \geq N_3   \}$ has a cluster point $(z, \gamma)$.
So
$(z,\gamma)\in \limsup_{n\to\infty} {\rm end} \, u_n = {\rm end}\, u$.
As $(z_{n_k}, \gamma_{n_k}) \in {\rm end}_\xi \, u_{n_k}$, we have that
$\gamma_{n_k} \geq \xi$ and therefore $\gamma\geq \xi$.
Thus
 $(z,\gamma)\in {\rm end} \, u \cap (X\times [\xi, 1]) = {\rm end}_\xi\, u$.
But, by \eqref{eung}, $\overline{d_m}((z,\gamma), {\rm end}_\xi\, u) = \xi$, which is a contradiction.

\end{proof}

\begin{tl} \label{comg} \
Let $u$ be a fuzzy set in $F_{USCG} (\mathbb{R}^m) \setminus \{\emptyset_{F(\mathbb{R}^m)}\}$
and for $n=1,2,\ldots$,
let $u_n$ be a fuzzy set in $F_{USCCON} (\mathbb{R}^m)$.
Then
$H_{\rm end} (u_n, u) \to 0$ as $n \to \infty$ if and only if $\lim_{n\to\infty}^{(\Gamma)} u_n = u$.
\end{tl}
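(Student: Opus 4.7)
The plan is to derive this corollary as an immediate consequence of Theorem \ref{fceg}. Since Theorem \ref{fceg} already covers the case where $u \in F_{USCG}(\mathbb{R}^m) \setminus \{\emptyset_{F(\mathbb{R}^m)}\}$ and $\{u_n\}$ satisfies the connectedness condition, the only thing I would need to verify is that membership of every $u_n$ in $F_{USCCON}(\mathbb{R}^m)$ forces the sequence $\{u_n\}$ to satisfy the connectedness condition.

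First I would unpack the definition: by construction of $F_{USCCON}(\mathbb{R}^m) = F_{USC}(\mathbb{R}^m) \cap F_{CON}(\mathbb{R}^m)$, for every $n \in \mathbb{N}$ the cut $[u_n]_\al$ is connected in $\mathbb{R}^m$ for every $\al \in (0,1]$ (with the convention already fixed in the excerpt that the empty set counts as connected). Next I would invoke Corollary \ref{cdpu}(i) with $t = 1$, which tells us that ${\rm end}_r\, u_n = {\rm end}_r^1\, u_n$ is connected in $\mathbb{R}^m \times [0,1]$ if and only if $[u_n]_r$ is connected in $\mathbb{R}^m$. Consequently ${\rm end}_\delta\, u_n$ is connected in $\mathbb{R}^m \times [0,1]$ for every $\delta \in (0,1]$ and every $n \in \mathbb{N}$.

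Given any $\varepsilon > 0$, I would then simply take $\delta = \min\{\varepsilon, 1\}$ and $N(\varepsilon) = 1$; the preceding paragraph shows that ${\rm end}_\delta\, u_n$ is connected in $\mathbb{R}^m \times [0,1]$ for every $n \geq N(\varepsilon)$. This is exactly the connectedness condition, so Theorem \ref{fceg} applies and yields both implications $H_{\rm end}(u_n,u) \to 0 \Leftrightarrow \lim_{n\to\infty}^{(\Gamma)} u_n = u$.

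There is no real obstacle here: the corollary is recorded precisely because $F_{USCCON}(\mathbb{R}^m)$ is a more familiar hypothesis than the connectedness condition, and the level characterization via Corollary \ref{cdpu}(i) makes the passage from the cutwise connectedness hypothesis to the endograph-slab connectedness condition immediate. All the analytic content — the separate treatment of $H^*({\rm end}\, u, {\rm end}\, u_n)$ and $H^*({\rm end}\, u_n, {\rm end}\, u)$, the intermediate-value argument on the connected slabs, the compactness of ${\rm end}_\xi\, u$ — has already been carried out in the proof of Theorem \ref{fceg}, and is inherited without change.
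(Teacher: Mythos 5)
Your proposal is correct and follows essentially the same route as the paper: reduce to Theorem \ref{fceg} by using Corollary \ref{cdpu}(\romannumeral1) to convert cutwise connectedness of each $u_n$ into connectedness of every slab ${\rm end}_r\, u_n$, which immediately yields the connectedness condition. The only difference is that you spell out the explicit choice $\delta=\min\{\varepsilon,1\}$, $N=1$, which the paper leaves implicit.
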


\begin{proof}

Let $\{u_n: n\in \mathbb{N}\}$ is a sequence in $F_{USCCON} (\mathbb{R}^m)$. Then,
by clause (\romannumeral1) of Corollary \ref{cdpu},
for each $n\in \mathbb{N}$ and $r\in (0,1]$,
${\rm end}_r\, u_n$ is a connected set in $\mathbb{R}^m \times [0,1]$.
Hence $\{u_n: n\in \mathbb{N}\}$
 satisfies
the
 connectedness condition.
Thus the desired result follows immediately from
Theorem \ref{fceg}.

\end{proof}

Here we mention that
for $u \in F_{USCG}(\mathbb{R}^m)\setminus \{\emptyset_{F(\mathbb{R}^m)}\}$ and a sequence $\{u_n\}$
in $F_{USC}(\mathbb{R}^m)$ which satisfies the connectedness condition,
$H_{\rm end}(u_n, u) \to 0$ does not imply that
there is an $N$ such that
 for each $n\geq N$, $u_n \in F_{USCCON}(\mathbb{R}^m)$.
The following Examples \ref{wcre} and \ref{wcrepu} are two such examples.

Examples \ref{wcre} and \ref{wcrepu}
 show
that
there exists a
$u \in F^1_{USCG}(\mathbb{R})$ and a sequence $\{u_n\}$
in $F^1_{USC}(\mathbb{R})$
such that
\\
(\romannumeral1)
$H_{\rm end}(u_n, u) \to 0$,
\\
(\romannumeral2) $\{u_n\}$
satisfies the connectedness condition, and
\\
(\romannumeral3) for each $n=1,2,\ldots$, $u_n\notin F_{USCCON}(\mathbb{R})$.

\begin{eap} \label{wcre}
{\rm

For $n=1,2,\ldots$,
let
$u_n$ be the fuzzy set $u$ given in Example \ref{ucn};
that is,
 $u_n=u$ is a fuzzy set in $F^1_{USCG}(\mathbb{R})$ defined by putting:
\[
[u]_\al=
\left\{
  \begin{array}{ll}
   [0,1] \cup [3,4], & \al \in (0.6, 1],\\
  \mbox{} [0,4], &  \al\in [0,0.6].
  \end{array}
\right.
\]
We have the following conclusions:
\\
(\romannumeral1)
$H_{\rm end}(u_n, u) \to 0$ since $u_n = u$ for $n=1,2,\ldots$;
\\
(\romannumeral2) $\{u_n\}$
satisfies the connectedness condition because ${\rm end}_{r}\, u$ is connected in $\mathbb{R} \times [0,1]$ when $r\in  [0,0.6]$ (see Example \ref{ucn});
\\
(\romannumeral3)
for each $n=1,2,\ldots$, $u_n\notin F_{USCCON}(\mathbb{R})$, as $u\notin   F_{USCCON}(\mathbb{R})$ (see Example \ref{ucn}).

}
\end{eap}

\begin{eap} \label{wcrepu}
{\rm

For $n=1,2,\ldots$,
let
$u_n$ be the fuzzy set in $F^1_{USC}(\mathbb{R})$ defined by putting:
\[
[u_n]_\al=
\left\{
  \begin{array}{ll}
   [0,1] \cup [3,4], & \al \in (0.6, 1],\\
   \mbox{}[0,4], & \al \in (\frac{1}{3n}, 0.6], \\
  \mbox{} [0,9n] \cup [10n, +\infty), &  \al\in [0,\frac{1}{3n}].
  \end{array}
\right.
\]
 Let $u \in F^1_{USCG}(\mathbb{R})$ be defined by putting
\[
[u]_\al=
\left\{
  \begin{array}{ll}
   [0,1] \cup [3,4], & \al \in (0.6, 1],\\
  \mbox{} [0,4], &  \al\in [0,0.6].
  \end{array}
\right.
\]
We have the following conclusions:
\\
(\romannumeral1)
$H_{\rm end}(u_n, u) \to 0$ since $H_{\rm end}(u_n, u) = \frac{1}{3n}$ for $n=1,2,\ldots$.
\\
(\romannumeral2) $\{u_n\}$
satisfies the connectedness condition because
for each $\varepsilon>0$, if $\frac{1}{3n} < \varepsilon$, i.e. $n> \frac{1}{3\varepsilon}$, then
${\rm end}_{\varepsilon}\, u_n$ is connected in $\mathbb{R} \times [0,1]$.
\\
(\romannumeral3)
For each $n=1,2,\ldots$, $u_n\notin F_{USCG}(\mathbb{R})$.
 For each $n=1,2,\ldots$, $u_n\notin   F_{USCCON}(\mathbb{R})$,
further, ${\rm end}_{\alpha}\, u_n$ is not connected in $\mathbb{R}$ when $\al\in [0,\frac{1}{3n}]$.

}
\end{eap}

\begin{re}
  {\rm

Theorem 9.2 in \cite{huang}, which is Theorem \ref{com} in this paper,
is a corollary of
Corollary \ref{comg} since $F_{USCGCON} (\mathbb{R}^m) \subseteq F_{USCCON} (\mathbb{R}^m) $.

Theorem \ref{fce} is a corollary of Corollary \ref{comg}, as
$F^1_{USCG} (\mathbb{R}^m) \subseteq F_{USCG} (\mathbb{R}^m) \setminus \{\emptyset_{F(\mathbb{R}^m)}\}$
and
 $F^1_{USCCON} (\mathbb{R}^m) \subseteq F_{USCCON} (\mathbb{R}^m) $.

 Corollary \ref{comg} is a corollary of Theorem \ref{fceg}.

}
\end{re}

There exist a large variety
of convergent sequences in $F_{USC} (\mathbb{R}^m)$ with their limit fuzzy sets in
$F_{USCG}(\mathbb{R}^m)\setminus \{\emptyset_{F(\mathbb{R}^m)}\}$.
These sequences
 satisfy
the
 connectedness condition, but their members belong to neither $F_{USCCON} (\mathbb{R}^m)$ nor $F_{USCG} (\mathbb{R}^m)$.
A such example is given in Example \ref{wcrepu}.
So compared with Theorem 9.2 in \cite{huang} and Theorem \ref{fce},
Theorem \ref{fceg}
significantly relaxes the prerequisite required for the compatibility of the endograph metric $H_{\rm end}$ and the $\Gamma$-convergence.
 Theorem \ref{fceg}
improves Theorem 9.2 in \cite{huang} and Theorem \ref{fce}.

\section{Some supplementary discussions}

In this section, we discuss the properties of
the fuzzy set sequences with the connectedness condition
as a supplement.
Using
the result of this section, we give an improvement of the result in Section \ref{nce}.

First, we claim that
for $D, E \in C(X \times [0,1])$,
\begin{equation}\label{srhenu}
  H (D, E) \geq |S_D - S_E|.
\end{equation}
To see this,
let $D,E \in C(X \times    [0,1])$.
If $|S_D - S_E|=0$, then  \eqref{srhenu} is true. If $|S_D - S_E|>0$. Assume that $S_D > S_E$.
Note that for each $(x, t) \in D$ with $t> S_E$,
$\overline{d} ((x, t) , E) \geq t - S_E$. Thus
$H (D, E) \geq \sup   \{     t - S_E : (x, t) \in D \mbox{ with } t> S_E \}= S_D - S_E$.
So \eqref{srhenu} is true.

Let $u\in F(X)$.
Define $\bm{S_u} :=  \sup\{a:  u(x) =a\}$.
Clearly $[u]_{S_u} = \emptyset$ is possible.
We can see that $S_u = S_{{\rm end}\, u}$.

From \eqref{srhenu}, we have that
for $u,v \in F_{USC} (X)$,
\begin{equation}\label{srhe}
  H_{\rm end} (u, v) \geq |S_u - S_v|.
\end{equation}

\begin{pp}\label{sem}
  Let $u$ and $u_n$, $n=1,2,\ldots$, be fuzzy sets in $F_{USC}(X)$.
If $H_{\rm end} (u_n, u) \to 0$ as $n\to \infty$,
 then $S_{u_n} \to S_u$ as $n\to \infty$.
\end{pp}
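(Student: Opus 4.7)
The statement is an immediate consequence of inequality \eqref{srhe}, which was just established. My plan is therefore simply to invoke this inequality with $v = u_n$, obtaining
\[
|S_{u_n} - S_u| \leq H_{\rm end}(u_n, u)
\]
for every $n$, and then let $n \to \infty$ to conclude that $S_{u_n} \to S_u$.

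Since there is essentially nothing more to do, the only thing worth double-checking is that \eqref{srhe} really applies in the generality required: both $u$ and $u_n$ are assumed to lie in $F_{USC}(X)$, so ${\rm end}\, u$ and ${\rm end}\, u_n$ are closed in $X \times [0,1]$, which is exactly the hypothesis of \eqref{srhenu}. Using the identity $S_u = S_{{\rm end}\, u}$ noted just before the proposition, the transition from \eqref{srhenu} to \eqref{srhe} is automatic. Thus no delicate issue arises; the result follows by the squeeze argument $0 \leq |S_{u_n} - S_u| \leq H_{\rm end}(u_n, u) \to 0$.

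There is no real obstacle: the conceptual content was front-loaded into the derivation of \eqref{srhenu} (where one uses that any point $(x,t) \in D$ with $t > S_E$ is at distance at least $t - S_E$ from $E$). Once that inequality is in hand, Proposition \ref{sem} is a one-line corollary, and I would present it as such.
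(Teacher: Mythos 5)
Your proposal is correct and is exactly the paper's argument: the paper also derives Proposition \ref{sem} as an immediate consequence of inequality \eqref{srhe}, applying it with $v=u_n$ and letting $n\to\infty$. Nothing further is needed.
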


\begin{proof}
The desired result follows immediately from \eqref{srhe}.

\end{proof}
We claim that:
(\uppercase\expandafter{\romannumeral1})
Let $u\in F_{USC}(X)$.
If there is an $\al\in [0, S_u]$ with $[u]_\al \in K(X)$, then $S_u =  \max\{a:  u(x) =a\}$.

If $\al = S_u$, then $[u]_{S_u} \not= \emptyset$ and the desired conclusion holds.
If $\al< S_u$, then pick a sequence $\{x_n\}$ in $[u]_\al$ with $u(x_n) \to S_u$.
From the compactness of $[u]_\al$, there is a subsequence
$\{x_{n_k}\}$ of $\{x_n\}$ such that
$\{x_{n_k}\}$ converges to a point $x$ in $[u]_\al$.
Thus
$u(x) \geq \lim_{k\to\infty} u(x_{n_k}) = S_u$.
Hence
$u(x) = S_u$ and the desired conclusion holds.
In the $\al< S_u$ case, we can also prove the desired conclusion as follows. Take an increasing sequence $\{\al_k\}$
in $[\al, 1]$ with $\al_k\to S_u-$.
Then $[u]_{\al_k} \in K(X)$ for each $k=1,2,\ldots$, and thus
$[u]_{S_u} = \cap_{k=1}^{+\infty} [u]_{\al_k} \in K(X)$.
So $[u]_{S_u} \not= \emptyset$ and
the desired conclusion holds.

If $u \in F_{USCG} (X) \setminus \{\emptyset_{F(X)}\}$, then from the above conclusion, $S_u =  \max\{a:  u(x) =a\}$.
If $u = \emptyset_{F(X)}$, then
$S_u = 0= \max\{a:  u(x) =a\}$.
So for each $u \in F_{USCG} (X)$,
$S_u =  \max\{a:  u(x) =a\}$.

Let $a\in [0,1]$. Define
\begin{gather*}
 F^{'a}_{USC} (X) = \{ u\in F_{USC} (X):  S_u = a\},
\\
 F^a_{USC} (X) = \{ u\in F_{USC} (X):  S_u = a,\ [u]_a\not=\emptyset\},
\\
  F^a_{USCG} (X) = \{ u\in F_{USCG}(X):  S_u = a\}.
\end{gather*}

\begin{tm}\label{csu}
Let $u \in F_{USCG} (\mathbb{R}^m) \setminus \{\emptyset_{F(\mathbb{R}^m)}\} $,
and let
 $\{u_n, n=1,2,\ldots\}$ be a fuzzy set sequence in $F_{USC} (\mathbb{R}^m)$ which satisfies the connectedness condition.
\\
(\romannumeral1) \ $\lim_{n\to\infty}^{(\Gamma)} u_n = u$ if and only if $H_{\rm end} (u_n, u) \to 0$ as $n \to \infty$.
\\
(\romannumeral2) \ If $\lim_{n\to\infty}^{(\Gamma)} u_n = u$, then
$u\in F^a_{USCG} (\mathbb{R}^m)$, where
 $a=\lim_{n\to\infty} S_{u_n} $.
\end{tm}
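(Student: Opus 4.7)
The first clause is in essence a restatement of Theorem \ref{fceg}, so my plan for part (\romannumeral1) is simply to invoke Theorem \ref{fceg}, whose hypotheses on $u$ and on $\{u_n\}$ coincide exactly with those here. No additional work is needed beyond pointing this out.

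For part (\romannumeral2), the plan is to combine (\romannumeral1) with Proposition \ref{sem} and with the preceding claim (\uppercase\expandafter{\romannumeral1}) in this section. Assuming $\lim_{n\to\infty}^{(\Gamma)} u_n = u$, part (\romannumeral1) gives $H_{\rm end}(u_n, u) \to 0$. Proposition \ref{sem} then yields $S_{u_n} \to S_u$, which already shows that the limit $a := \lim_{n\to\infty} S_{u_n}$ exists and equals $S_u$. So the task reduces to verifying that $u \in F^{S_u}_{USCG}(\mathbb{R}^m)$, i.e., $[u]_{S_u}\neq \emptyset$ and the membership in $F_{USCG}$ is already in the hypotheses.

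To verify $[u]_{S_u} \neq \emptyset$, I would use the preceding claim (\uppercase\expandafter{\romannumeral1}): if some $\alpha \in [0, S_u]$ satisfies $[u]_\alpha \in K(\mathbb{R}^m)$, then $S_u = \max\{a : u(x)=a\}$, hence $[u]_{S_u}\neq\emptyset$. Since $u \in F_{USCG}(\mathbb{R}^m)\setminus\{\emptyset_{F(\mathbb{R}^m)}\}$, there is some $x_0$ with $u(x_0)>0$; setting $\alpha_0 := u(x_0)>0$ gives $\alpha_0 \leq S_u$ and $[u]_{\alpha_0}\in K(\mathbb{R}^m)$, and the hypothesis of claim (\uppercase\expandafter{\romannumeral1}) is satisfied. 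Combining $[u]_{S_u}\neq\emptyset$ with $u \in F_{USCG}(\mathbb{R}^m)$ gives $u \in F^{S_u}_{USCG}(\mathbb{R}^m)=F^a_{USCG}(\mathbb{R}^m)$.

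Essentially the entire argument is bookkeeping: part (\romannumeral1) is already done, part (\romannumeral2) just chains Proposition \ref{sem} with claim (\uppercase\expandafter{\romannumeral1}). There is no substantial obstacle; the one point that needs a moment's care is confirming that $u\ne \emptyset_{F(\mathbb{R}^m)}$ supplies a positive $\alpha$-cut which, under the $F_{USCG}$ hypothesis, is compact, so that claim (\uppercase\expandafter{\romannumeral1}) can be applied to conclude $[u]_{S_u}\neq\emptyset$.
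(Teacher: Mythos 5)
Your proof is correct and follows essentially the same route as the paper: part (\romannumeral1) is exactly Theorem \ref{fceg}, and part (\romannumeral2) is obtained by chaining (\romannumeral1) with Proposition \ref{sem}, which gives the existence of $a=\lim_{n\to\infty}S_{u_n}=S_u$. Your extra verification that $[u]_{S_u}\neq\emptyset$ via the preceding claim (\uppercase\expandafter{\romannumeral1}) is sound but not strictly required, since $F^a_{USCG}(\mathbb{R}^m)$ is defined only by the conditions $u\in F_{USCG}(\mathbb{R}^m)$ and $S_u=a$.
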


\begin{proof}
(\romannumeral1) is Theorem \ref{fceg}.
(\romannumeral2) follows immediately from (\romannumeral1) and Proposition \ref{sem}.

\end{proof}

The following
Corollaries \ref{fcegrua} and \ref{fcegru} follows immediately from Theorem \ref{csu}.
Corollary \ref{fcegru} is an immediate
consequence of Corollary \ref{fcegrua}, and
Theorem \ref{fce} is an immediate consequence of Corollary \ref{fcegru}.

\begin{tl} \label{fcegrua}
Let $u \in F_{USCG} (\mathbb{R}^m) \setminus \{\emptyset_{F(\mathbb{R}^m)}\} $
and let
 $\{u_n, n=1,2,\ldots\}$ be a fuzzy set sequence in $F_{USC} (\mathbb{R}^m)$ which satisfies the connectedness condition.
\\
(\romannumeral1) \ $\lim_{n\to\infty}^{(\Gamma)} u_n = u$ if and only if $H_{\rm end} (u_n, u) \to 0$ as $n \to \infty$.
\\
(\romannumeral2) \ If $\lim_{n\to\infty}^{(\Gamma)} u_n = u$ and $S_{u_n} \to 1$ as $n\to\infty$, then
$u \in F^1_{USCG} (\mathbb{R}^m) $.
\end{tl}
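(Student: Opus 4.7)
The plan is to derive both parts directly from Theorem~\ref{csu}, which has just been proved and already packages the bulk of the work.

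Part~(\romannumeral1) is literally the content of Theorem~\ref{csu}(\romannumeral1) (equivalently, Theorem~\ref{fceg}), so no new argument is needed; I would simply cite it.

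For part~(\romannumeral2) I would first apply Theorem~\ref{csu}(\romannumeral2) to conclude that $u \in F^{a}_{USCG}(\mathbb{R}^m)$ with $a = \lim_{n\to\infty} S_{u_n}$. The additional hypothesis $S_{u_n} \to 1$ immediately forces $a = 1$, so $u \in F_{USCG}(\mathbb{R}^m)$ and $S_u = 1$. To finish, I must verify that this coincides with $u$ lying in $F^1_{USCG}(\mathbb{R}^m)$ in the sense of the normality definition from Section~2, i.e.\ that $[u]_1 \neq \emptyset$. For this I would invoke the claim (\uppercase\expandafter{\romannumeral1}) recorded just before the definitions of $F^{'a}_{USC}(X)$, $F^{a}_{USC}(X)$, $F^{a}_{USCG}(X)$: every $u \in F_{USCG}(X)\setminus\{\emptyset_{F(X)}\}$ satisfies $S_u = \max\{a : u(x) = a\}$, so the supremum defining $S_u$ is attained. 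Applied to our $u$, which belongs to $F_{USCG}(\mathbb{R}^m)\setminus\{\emptyset_{F(\mathbb{R}^m)}\}$ by hypothesis, attainment at the value $1$ yields some $x \in \mathbb{R}^m$ with $u(x) = 1$, hence $[u]_1 \neq \emptyset$ and $u$ is normal.

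No step is really an obstacle here, since every ingredient is already in place. The only subtlety worth flagging is the notational overlap between $F^1_{USCG}(\mathbb{R}^m)$ in the $F^{a}_{USCG}$ family (USCG with $S_u = 1$) and $F^1_{USCG}(\mathbb{R}^m)$ from Section~2 (normal USCG); the attainment claim (\uppercase\expandafter{\romannumeral1}) is precisely what reconciles the two, so both descriptions of $u$ agree and part~(\romannumeral2) follows.
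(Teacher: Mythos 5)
Your proposal is correct and follows essentially the same route as the paper, which simply derives both parts immediately from Theorem \ref{csu} (part (\romannumeral1) being Theorem \ref{fceg} restated, and part (\romannumeral2) coming from Theorem \ref{csu}(\romannumeral2) with $a=1$). Your explicit reconciliation of $S_u=1$ with normality via the attainment claim (\uppercase\expandafter{\romannumeral1}) is a detail the paper leaves implicit, and it is handled correctly.
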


\begin{tl} \label{fcegru}
Let $u \in F_{USCG} (\mathbb{R}^m) \setminus \{\emptyset_{F(\mathbb{R}^m)}\} $
and let
 $\{u_n, n=1,2,\ldots\}$ be a fuzzy set sequence in $F^{'1}_{USC} (\mathbb{R}^m)$ which satisfies the connectedness condition.
\\
(\romannumeral1) \ $\lim_{n\to\infty}^{(\Gamma)} u_n = u$ if and only if $H_{\rm end} (u_n, u) \to 0$ as $n \to \infty$.
\\
(\romannumeral2) \ If $\lim_{n\to\infty}^{(\Gamma)} u_n = u$, then
$u \in F^1_{USCG} (\mathbb{R}^m) $.
\end{tl}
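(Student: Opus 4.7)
The plan is to derive Corollary \ref{fcegru} as an immediate consequence of the preceding Corollary \ref{fcegrua}, exploiting the fact that the stronger hypothesis $u_n \in F^{'1}_{USC}(\mathbb{R}^m)$ automatically supplies the extra premise needed for part (ii). Since $F^{'1}_{USC}(\mathbb{R}^m) \subseteq F_{USC}(\mathbb{R}^m)$, the entire hypothesis of Corollary \ref{fcegrua} is already in force.

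For part (\romannumeral1), I would simply observe that the sequence $\{u_n\}$ lies in $F_{USC}(\mathbb{R}^m)$ and satisfies the connectedness condition, while $u \in F_{USCG}(\mathbb{R}^m) \setminus \{\emptyset_{F(\mathbb{R}^m)}\}$. These are precisely the hypotheses of Corollary \ref{fcegrua}(\romannumeral1) (equivalently, of Theorem \ref{fceg}), so the equivalence of $\Gamma$-convergence and $H_{\rm end}$-convergence follows directly with no further argument.

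For part (\romannumeral2), the key observation is that membership $u_n \in F^{'1}_{USC}(\mathbb{R}^m)$ means by definition that $S_{u_n} = 1$ for every $n$, so the sequence $\{S_{u_n}\}$ is constantly $1$ and hence trivially tends to $1$. Assuming $\lim_{n\to\infty}^{(\Gamma)} u_n = u$, part (\romannumeral1) gives $H_{\rm end}(u_n, u) \to 0$; this, together with $S_{u_n} \to 1$, is exactly the hypothesis of Corollary \ref{fcegrua}(\romannumeral2), which yields $u \in F^1_{USCG}(\mathbb{R}^m)$.

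There is no substantive obstacle in this argument; the entire work has already been carried out in Corollary \ref{fcegrua}, which in turn rests on Theorem \ref{fceg} and Proposition \ref{sem}. The only thing to notice is the bookkeeping: the defining condition $S_{u_n} = 1$ built into $F^{'1}_{USC}(\mathbb{R}^m)$ is precisely what promotes the conditional conclusion of Corollary \ref{fcegrua}(\romannumeral2) into an unconditional one here. Accordingly, I expect the written proof to consist of a single short sentence for part (\romannumeral1) citing Corollary \ref{fcegrua}(\romannumeral1), and two sentences for part (\romannumeral2) noting $S_{u_n} \equiv 1$ and invoking Corollary \ref{fcegrua}(\romannumeral2).
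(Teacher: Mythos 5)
Your proposal is correct and matches the paper, which states that Corollary \ref{fcegru} follows immediately from Corollary \ref{fcegrua} (itself a consequence of Theorem \ref{csu}). The key bookkeeping point you identify --- that $u_n \in F^{'1}_{USC}(\mathbb{R}^m)$ means $S_{u_n}=1$ for all $n$, so $S_{u_n}\to 1$ holds automatically and upgrades part (\romannumeral2) of Corollary \ref{fcegrua} to the unconditional statement --- is exactly the intended argument.
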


Let $u \in F_{USCG} (\mathbb{R}^m) \setminus \{\emptyset_{F(\mathbb{R}^m)}\} $,
and let
 $\{u_n, n=1,2,\ldots\}$ be a fuzzy set sequence in $F_{USC} (\mathbb{R}^m)$.
$\lim_{n\to\infty}^{(\Gamma)} u_n = u$ does not necessarily imply that $S_{u_n} \to S_u$ as $n\to \infty$.
Counterexamples are
given in the following Examples \ref{gen} and \ref{genr}.

\begin{eap} \label{gen}
  {\rm

Let $u\in F_{USCG} (\mathbb{R}) \setminus \{\emptyset_{F(\mathbb{R})}\} $ be defined by putting
\[
 [u]_\al = \left\{
             \begin{array}{ll}
           \emptyset, & \al\in (0.6, 1], \\
     \mbox{} [0,1], &   \al\in [0, 0.6].
             \end{array}
           \right.
\]
For $n=1,2,\ldots$, let $u_n \in F_{USC} (\mathbb{R})$ be defined by putting
\[
 [u_n]_\al = \left\{
             \begin{array}{ll}
          \{n\}, & \al\in (0.6, 1], \\
     \mbox{}  \{n\} \cup [0,1], &   \al\in [0, 0.6].
             \end{array}
           \right.
\]
Then clearly $\lim_{n\to\infty}^{(\Gamma)} u_n = u$.
However  $S_u= 0.6$
and
 $S_{u_n}= 1$ for $n=1,2,\ldots$,
and
hence
 $\lim_{n\to \infty} S_{u_n} =1 \not= S_u$.

}
\end{eap}

\begin{eap}   \label{genr}
  {\rm

Let $\{v_n\}$ be a sequence in $F_{USC} (\mathbb{R})$ defined by
\[
v_n=\left\{
  \begin{array}{ll}
    u_{n/2}, & n \mbox{ is even}, \\
  u, &  n \mbox{ is odd},
  \end{array}
\right.
\]
where $u$, $u_n$, $n=1,2,\ldots$ are defined in Example \ref{gen}.

Then clearly $\lim_{n\to\infty}^{(\Gamma)} v_n = u$.
However
\[
S_{v_n}=
\left\{
  \begin{array}{ll}
1, & n \mbox{ is even}, \\
  0.6, &  n \mbox{ is odd},
  \end{array}
\right.
\]
and hence
the sequence
  $\{S_{v_n}\}$
is not convergent.

}
\end{eap}

\begin{pp} \label{emu} Let $u$ in $F_{USC}(X)$
and
  let $\{u_n\}$ be a sequence in $F_{USC}(X)$ which converges to $u$ in $(F_{USC}(X), H_{\rm end})$.
 Let $\al\in (0,1]$ and $\beta\in (\alpha, 1]$. If $[u]_\al$ is bounded in $X$, then there is an $N\in \mathbb{N}$
 such that
 $\cup_{n \geq N} [u_n]_\beta$ is bounded in $X$.
\end{pp}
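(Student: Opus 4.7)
The plan is to use the level shift inherent in $H_{\rm end}$ convergence to push points of $[u_n]_\beta$ back to a neighborhood of $[u]_\alpha$, and then exploit boundedness of $[u]_\alpha$ in $X$.

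Set $\varepsilon := (\beta-\alpha)/2 > 0$. Since $H_{\rm end}(u_n,u)\to 0$, I would choose $N\in\mathbb{N}$ such that $H_{\rm end}(u_n,u)<\varepsilon$ for all $n\geq N$. Because $[u]_\alpha$ is bounded in $X$, there exists $x_0\in X$ and $R>0$ with $[u]_\alpha\subseteq \{y\in X: d(x_0,y)\leq R\}$. I claim that for every $n\geq N$,
\[
[u_n]_\beta \;\subseteq\; \{x\in X: d(x_0,x) < R+\varepsilon\},
\]
which gives the desired boundedness of $\bigcup_{n\geq N}[u_n]_\beta$.

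To verify the claim, fix $n\geq N$ and $x\in[u_n]_\beta$. Then $(x,\beta)\in {\rm end}\,u_n$, so
\[
\overline{d}\bigl((x,\beta),{\rm end}\,u\bigr) \;\leq\; H^*({\rm end}\,u_n,{\rm end}\,u) \;\leq\; H_{\rm end}(u_n,u) \;<\; \varepsilon .
\]
Hence there exists $(y,t)\in{\rm end}\,u$ with $d(x,y)+|\beta-t|<\varepsilon$. In particular $d(x,y)<\varepsilon$ and $t>\beta-\varepsilon = \alpha+\varepsilon >\alpha$. Since $(y,t)\in{\rm end}\,u$ means $u(y)\geq t>\alpha$, we obtain $y\in[u]_\alpha$, and therefore $d(x_0,x)\leq d(x_0,y)+d(y,x) < R+\varepsilon$, completing the claim.

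There is essentially no obstacle: the argument is just a careful unpacking of the Hausdorff-metric inequality on endographs, combined with the observation that the $\overline d$-metric penalizes the level drop by exactly $|\beta - t|$, so that proximity in the endograph at height $\beta$ forces a witness sitting at a height strictly above $\alpha$. The hypothesis $\beta>\alpha$ is exactly what supplies the positive buffer $\beta-\alpha$ allowing us to convert closeness in ${\rm end}\,u_n$ to membership in a bounded enlargement of $[u]_\alpha$; if one had only $\beta\geq\alpha$ the argument would collapse.
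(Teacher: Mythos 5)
Your proof is correct and follows essentially the same route as the paper: both arguments rest on the level-shift inequality $H^{*}([u_n]_{\beta},[u]_{\alpha})<\beta-\alpha$ forced by $H_{\rm end}(u_n,u)<\beta-\alpha$, and then conclude that $\bigcup_{n\geq N}[u_n]_{\beta}$ lies in a bounded enlargement of $[u]_{\alpha}$. The only difference is that you derive this cut-level inequality explicitly from the definition of $\overline{d}$ on endographs (with the harmless choice $\varepsilon=(\beta-\alpha)/2$), whereas the paper states it directly.
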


\begin{proof}
 Since $H_{\rm end}(u_n, u) \to 0$, there is an $N \in \mathbb{N}$
such that
\begin{equation*}
H_{\rm end} (u_{n},  u)  <  \beta-\al
\end{equation*}
for all $n \geq N$.
Thus if $n \geq N$ and $[u_n]_\beta \not= \emptyset$, then $[u]_\al \not= \emptyset$ and
\begin{equation}\label{cutsgmr}
H^*([u_n]_{\beta},       [u]_{\al})  <  \beta-\al.
\end{equation}

If $ \cup_{n \geq N} [u_n]_{\beta}\not=\emptyset$, then by \eqref{cutsgmr},
$
H^*( \cup_{n \geq N} [u_n]_{\beta},       [u]_{\al})  \leq \beta-\al.
$ Indeed, $
H^*( \cup_{n \geq N} [u_n]_{\beta},       [u]_{\al})  < \beta-\al.
$
Thus $ \cup_{n \geq N} [u_n]_{\beta}$ is bounded in $X$.

If $ \cup_{n \geq N} [u_n]_{\beta}=\emptyset$, then obviously $\cup_{n \geq N} [u_n]_{\beta}$ is bounded in $X$.

The proof is completed.

We can see that
if $[u]_\al = \emptyset$, then
 $[u_n]_\beta=\emptyset$ for all $n\geq N$, i.e.
 $\cup_{n \geq N} [u_n]_\beta = \emptyset$, where $N$ is the $N$ given in the first line of this proof.

\end{proof}

\begin{tl} \label{emun} Let $u \in F_{USC}(\mathbb{R}^m)$
and
  let $\{u_n\}$ be a sequence in $F_{USC}(\mathbb{R}^m)$ which converges to $u$ in $(F_{USC}(\mathbb{R}^m), H_{\rm end})$.
 Let $\al\in (0,1]$ and $\beta\in (\alpha, 1]$. If $[u]_\al$ is compact in $\mathbb{R}^m$, then there is an $N\in \mathbb{N}$
 such that for all $n\geq N$,
 $[u_n]_\beta$ is compact in $\mathbb{R}^m$.
\end{tl}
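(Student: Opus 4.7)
The plan is to reduce Corollary \ref{emun} directly to Proposition \ref{emu} combined with the Heine--Borel theorem, which characterizes compact subsets of $\mathbb{R}^m$ as precisely the closed and bounded ones.

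First I would observe that the hypothesis that $[u]_\al$ is compact in $\mathbb{R}^m$ implies, in particular, that $[u]_\al$ is bounded, so the hypothesis of Proposition \ref{emu} is satisfied with $X=\mathbb{R}^m$. Applying that proposition produces an $N\in\mathbb{N}$ such that $\bigcup_{n\geq N}[u_n]_\beta$ is bounded in $\mathbb{R}^m$. Consequently, for every $n\geq N$ the set $[u_n]_\beta$ is a subset of a bounded set, hence itself bounded in $\mathbb{R}^m$.

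Next I would note that since $u_n\in F_{USC}(\mathbb{R}^m)$ and $\beta\in(0,1]\subseteq(0,1]$, the $\beta$-cut $[u_n]_\beta$ is closed in $\mathbb{R}^m$ (this is part of the very definition of $F_{USC}(\mathbb{R}^m)$, which requires $[u_n]_\beta\in C(\mathbb{R}^m)\cup\{\emptyset\}$). Combining closedness with the boundedness obtained in the previous step and invoking the Heine--Borel theorem, each $[u_n]_\beta$ with $n\geq N$ is either empty or a compact subset of $\mathbb{R}^m$, which is the desired conclusion.

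There is essentially no obstacle here: the entire content sits in Proposition \ref{emu}, and the step from a bounded closed subset of $\mathbb{R}^m$ to a compact one is the standard Heine--Borel argument specific to Euclidean space. The only small point to flag is that the statement permits $[u_n]_\beta$ to be empty, which is trivially compact in the sense consistent with the convention $[u_n]_\beta\in K(\mathbb{R}^m)\cup\{\emptyset\}$ used throughout the paper; this matches the remark at the end of the proof of Proposition \ref{emu}.
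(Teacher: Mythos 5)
Your proof is correct and follows essentially the same route as the paper: invoke Proposition \ref{emu} to bound $\bigcup_{n\geq N}[u_n]_\beta$, then combine the closedness of each $\beta$-cut with the Heine--Borel theorem. The paper compresses this into the remark that for closed subsets of $\mathbb{R}^m$ compactness is equivalent to boundedness; your version merely spells out the same steps (including the harmless empty-cut case) in more detail.
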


\begin{proof} Note that $[u_n]_\beta$ is compact in $\mathbb{R}^m$ means that $[u_n]_\beta$ is bounded in $\mathbb{R}^m$.
  So
  the desired result follows immediately from Proposition \ref{emu}.

\end{proof}

\begin{tl} \label{emungr} Let $u\in F_{USCG} (\mathbb{R}^m)$
and
  let $\{u_n\}$ be a sequence in $F_{USC}(\mathbb{R}^m)$ which converges to $u$ in $(F_{USC}(\mathbb{R}^m), H_{\rm end})$.
Then for each $\al\in (0,1]$, there is an $N(\al)$ in $\mathbb{N}$
 such that for all $n\geq N$,
 $[u_n]_\al$ is compact in $\mathbb{R}^m$.
\end{tl}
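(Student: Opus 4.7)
The plan is to reduce the claim directly to Corollary \ref{emun} by exploiting the hypothesis that \emph{every} positive cut of $u$ is compact. Given $\al\in(0,1]$, I would pick any $\gamma\in(0,\al)$ (for instance $\gamma=\al/2$). Since $u\in F_{USCG}(\mathbb{R}^m)$, the cut $[u]_\gamma$ is either empty or compact in $\mathbb{R}^m$; in particular $[u]_\gamma$ is compact (treating $\emptyset$ as compact, or noting that $\emptyset$ forces $[u_n]_\al=\emptyset$ eventually by the last paragraph of the proof of Proposition \ref{emu}, which already gives the conclusion trivially).

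With $\gamma<\al$ and $[u]_\gamma$ compact, Corollary \ref{emun} applied with its $\al$ taken to be $\gamma$ and its $\beta$ taken to be $\al$ yields an $N(\al)\in\mathbb{N}$ such that $[u_n]_\al$ is compact in $\mathbb{R}^m$ for every $n\geq N(\al)$. That is exactly the statement of Corollary \ref{emungr}.

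There is essentially no obstacle here: the whole content lies in Corollary \ref{emun}, and the passage from $F_{USCG}$ to ``every positive level works'' is just the freedom to slip in a slightly smaller level $\gamma$, which is available precisely because the index set is $(0,1]$ (open at $0$). The only small care needed is the degenerate case $[u]_\gamma=\emptyset$, and this is already handled inside the proof of Proposition \ref{emu} (the final remark there shows $[u_n]_\al=\emptyset$ for all large $n$, which is trivially compact).
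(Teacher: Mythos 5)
Your proposal is correct and takes essentially the same route as the paper: the paper's proof likewise observes that $[u]_{\al/2}$ is compact and applies Corollary \ref{emun} with $\beta=\al$. Your explicit treatment of the degenerate case $[u]_{\al/2}=\emptyset$ is a harmless (and slightly more careful) addition.
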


\begin{proof}
  Note that for each $\al\in (0,1]$, $[u]_{\al/2}$ is compact in $\mathbb{R}^m$.
  So
  the desired result follows immediately from Corollary \ref{emun}.

\end{proof}

\begin{pp} \label{emungrc} Let $u\in F_{USC} (\mathbb{R}^m) \setminus \{\emptyset_{F(\mathbb{R}^m)}\} $
and
  let $\{u_n\}$ be a sequence in $F_{USC}(\mathbb{R}^m)$ which converges to $u$ in $(F_{USC}(\mathbb{R}^m), H_{\rm end})$.
  Let $\al\in (0,S_u)$. If $[u]_\al$ is compact in $\mathbb{R}^m$,
then there is an $N$ in $\mathbb{N}$
 such that for all $n\geq N$,
 $u_n \in F^{S_{u_n}}_{USC} (\mathbb{R}^m)$.
\end{pp}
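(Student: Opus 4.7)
The plan is to combine three ingredients already established in the paper: the Hausdorff-endograph estimate for the supremum value (Proposition~\ref{sem}), the preservation of compact positive cuts under endograph convergence (Corollary~\ref{emun}), and the attainment property stated as claim~(\uppercase\expandafter{\romannumeral1}) just before Theorem~\ref{csu} (namely, if $v \in F_{USC}(X)$ has some cut $[v]_{\gamma} \in K(X)$ with $\gamma \in [0,S_v]$, then $S_v = \max\{a : v(x)=a\}$, so in particular $[v]_{S_v}\neq\emptyset$).

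First, since $\alpha \in (0, S_u)$, I would fix an auxiliary level $\beta$ with $\alpha < \beta < S_u$. By Proposition~\ref{sem}, $H_{\rm end}(u_n,u) \to 0$ forces $S_{u_n} \to S_u$, so there is $N_2 \in \mathbb{N}$ such that $S_{u_n} > \beta$ for every $n \geq N_2$. At the same time, since $[u]_\alpha$ is compact in $\mathbb{R}^m$ and $\beta > \alpha$, Corollary~\ref{emun} supplies an $N_1 \in \mathbb{N}$ such that $[u_n]_\beta$ is compact in $\mathbb{R}^m$ for every $n \geq N_1$.

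Now set $N := \max\{N_1, N_2\}$ and take any $n \geq N$. Then $\beta \in [0, S_{u_n}]$ (because $S_{u_n} > \beta$) and $[u_n]_\beta \in K(\mathbb{R}^m)$. Applying claim~(\uppercase\expandafter{\romannumeral1}) to the fuzzy set $u_n$ at level $\beta$ yields $[u_n]_{S_{u_n}} \neq \emptyset$, which is precisely the defining condition for $u_n \in F^{S_{u_n}}_{USC}(\mathbb{R}^m)$.

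The proof is essentially a bookkeeping argument; there is no genuinely hard step once the intermediate $\beta$ is chosen. The one place to take care is making sure $\beta$ lies strictly between $\alpha$ and $S_u$: we need $\beta > \alpha$ to invoke Corollary~\ref{emun}, and $\beta < S_u$ to guarantee, via Proposition~\ref{sem}, that eventually $S_{u_n} > \beta$ so that $\beta$ sits in $[0, S_{u_n}]$ and claim~(\uppercase\expandafter{\romannumeral1}) is applicable to $u_n$. The hypothesis $\alpha \in (0, S_u)$ (strict inequality on the right) is exactly what makes such a $\beta$ available.
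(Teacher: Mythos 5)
Your proof is correct and follows essentially the same route as the paper's: pick an intermediate level $\beta\in(\al,S_u)$, use Proposition \ref{sem} to get $S_{u_n}>\beta$ eventually, use Corollary \ref{emun} to get $[u_n]_\beta\in K(\mathbb{R}^m)$ eventually, and then apply affirmation (\uppercase\expandafter{\romannumeral1}) to each $u_n$ at level $\beta$. Your added remark on why $\beta$ must lie strictly between $\al$ and $S_u$ is a correct and helpful clarification of a point the paper leaves implicit.
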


\begin{proof}
  Pick $\beta \in (\al, S_u)$.
 By Proposition \ref{sem}, there is an $N_1\in \mathbb{N}$
 such that for all $n\geq N_1$, $S_{u_n} > \beta$.
By Corollary \ref{emun}, there is an $N_2\in \mathbb{N}$
 such that for all $n\geq N_2$,
  $[u_n]_\beta$
is compact in $\mathbb{R}^m$.
Put $N_3 = \max\{N_1, N_2\}$.
Then for all $n\geq N_3$,
 $[u_n]_\beta \in K(\mathbb{R}^m)$.
So by affirmation (\uppercase\expandafter{\romannumeral1}) in Page 24,
 $u_n \in F^{S_{u_n}}_{USC} (\mathbb{R}^m)$.

\end{proof}

\section{Discussions}

Let $u\in F(X)$. Define $\bm{S_u} := \sup\{ u(x): x\in X\}$. Clearly $[u]_{S_u} = \emptyset$ is possible.

Let $\{u_n\}$ be a fuzzy set sequence in $F_{USC} (\mathbb{R}^m)$
and $\{u_{n_k}\}$ a subsequence of $\{u_n\}$.
If there is a $u \in  F_{USC} (\mathbb{R}^m) \setminus \{\emptyset_{F(\mathbb{R}^m)}\} $
 such that $u= \lim_{n\to\infty}^{(\Gamma)} u_n$,
then we can define
\begin{gather*}
  A_{n_k}^1 := \{\xi >0: \mbox{ for each } n_k, \ H^*({\rm end}\, u, {\rm end}\, u_{n_k}) > \xi \}, \\
    A_{n_k}^2 := \{\xi >0: \mbox{ for each } n_k, \ H^*({\rm end}\, u_{n_k}, {\rm end}\, u) > \xi \}.
\end{gather*}

 Let $\{u_n\}$ be a fuzzy set sequence in $F_{USC} (\mathbb{R}^m)$ and let $u$ be a fuzzy set in $F_{USC}(\mathbb{R}^m) \setminus \{\emptyset_{F(\mathbb{R}^m)}\}$.
We call the pair $\{u_n\}$, $u$
is a \textbf{\emph{weak connectedness compact pair}}
if one of the following (\romannumeral1) and (\romannumeral2) holds:
\\
(\romannumeral1) \ $\lim_{n\to\infty}^{(\Gamma)} u_n$ does not exist, or $\lim_{n\to\infty}^{(\Gamma)} u_n$ exists but $u \not= \lim_{n\to\infty}^{(\Gamma)} u_n$;
\\
(\romannumeral2) \  $\lim_{n\to\infty}^{(\Gamma)} u_n$ exists and $u= \lim_{n\to\infty}^{(\Gamma)} u_n$, moreover (\romannumeral2-1) and (\romannumeral2-2) listed below are true.

(\romannumeral2-1) \ for each
$\{u_{n_k}\}$
with $A_{n_k}^1 \not= \emptyset$,
 there exists a $\xi \in A_{n_k}^1$
such that ${\rm end}_\xi\, u$ is compact in $\mathbb{R}^m \times [0,1]$.

(\romannumeral2-2) \
for each
$\{u_{n_k}\}$
with $A_{n_k}^2 \not= \emptyset$,
 there exists a $\xi \in A_{n_k}^2$ and an $N(\xi) \in \mathbb{N}$
such that $\xi < S_u$, ${\rm end}_\xi\, u$ is compact in $\mathbb{R}^m \times [0,1]$, and
$ {\rm end}_{\xi}\, u_{n_k}$ is connected in $\mathbb{R}^m \times [0,1]$ for all $n_k\geq N$.

\begin{tm}
 \label{fcegun}
Let $u \in F_{USC} (\mathbb{R}^m) \setminus \{\emptyset_{F(\mathbb{R}^m)}\} $,
and let
 $\{u_n, n=1,2,\ldots\}$ be a fuzzy set sequence in $F_{USC} (\mathbb{R}^m)$.
 If the pair $\{u_n\}$, $u$
is a weak connectedness compact pair,
then $H_{\rm end} (u_n, u) \to 0$ as $n \to \infty$ if and only if $\lim_{n\to\infty}^{(\Gamma)} u_n = u$.

\end{tm}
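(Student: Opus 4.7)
The plan is to mimic the proof of Theorem \ref{fceg} almost line by line, replacing its global connectedness/compactness hypotheses with the subsequence-specific data supplied by the definition of a weak connectedness compact pair. The forward implication $H_{\rm end}(u_n,u)\to 0 \Rightarrow \lim_{n\to\infty}^{(\Gamma)} u_n = u$ is immediate from Theorem \ref{hkg}, so the real work is the converse. Assume $\lim_{n\to\infty}^{(\Gamma)} u_n = u$; then clause (i) in the definition is ruled out, so clauses (ii-1) and (ii-2) are in force. Suppose for contradiction that $H_{\rm end}(u_n,u)\not\to 0$. Then either $H^*({\rm end}\,u,{\rm end}\,u_n)\not\to 0$ or $H^*({\rm end}\,u_n,{\rm end}\,u)\not\to 0$, and in each case one extracts a subsequence $\{u_{n_k}\}$ and an $\varepsilon>0$ that witnesses $\varepsilon\in A_{n_k}^1$ or $\varepsilon\in A_{n_k}^2$ respectively. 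In particular the corresponding $A_{n_k}^i$ is nonempty, so the relevant clause of the definition may be applied.

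In the first case, clause (ii-1) furnishes $\xi\in A_{n_k}^1$ with ${\rm end}_\xi\,u$ compact. For each $k$ I pick $(x_k,\alpha_k)\in{\rm end}\,u$ with $\overline{d_m}((x_k,\alpha_k),{\rm end}\,u_{n_k})>\xi$; since ${\rm end}\,u_{n_k}\supseteq \mathbb{R}^m\times\{0\}$, this forces $\alpha_k>\xi$, placing $(x_k,\alpha_k)$ in ${\rm end}_\xi\,u$. Compactness delivers a convergent sub-subsequence $(x_{k_l},\alpha_{k_l})\to(x,\alpha)\in{\rm end}_\xi\,u\subseteq{\rm end}\,u$, and the triangle inequality \eqref{uen} together with Lemma \ref{infe}(i) contradicts $(x,\alpha)\in{\rm end}\,u=\liminf_n{\rm end}\,u_n$. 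This is literally Case (\romannumeral1) of the proof of Theorem \ref{fceg}, now carried out along the subsequence $\{n_k\}$ instead of the whole sequence.

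In the second case, clause (ii-2) furnishes $\xi\in A_{n_k}^2$ with $\xi<S_u$, ${\rm end}_\xi\,u$ compact, and $N$ such that ${\rm end}_\xi\,u_{n_k}$ is connected for every $n_k\geq N$. The condition $\xi<S_u$ is crucial: it lets me pick $y\in\mathbb{R}^m$ with $u(y)>\xi$, set $\mu=u(y)$, and use $(y,\mu)\in{\rm end}\,u=\liminf_n{\rm end}\,u_n$ to produce $(y_n,\beta_n)\in{\rm end}\,u_n$ converging to $(y,\mu)$, which for large $n$ lies in ${\rm end}_\xi\,u_n$ at distance less than $\xi$ from ${\rm end}_\xi\,u$. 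On the other hand $\xi\in A_{n_k}^2$ provides $(x_{n_k},\alpha_{n_k})\in{\rm end}_\xi\,u_{n_k}$ at distance exceeding $\xi$ from ${\rm end}_\xi\,u$. The $1$-Lipschitz map $f_k(z,\zeta)=\overline{d_m}((z,\zeta),{\rm end}_\xi\,u)$ on the connected set ${\rm end}_\xi\,u_{n_k}$ then hits the value $\xi$ for each $n_k\geq\max\{N_2,N\}$, yielding $(z_{n_k},\gamma_{n_k})\in{\rm end}_\xi\,u_{n_k}$ with $\overline{d_m}((z_{n_k},\gamma_{n_k}),{\rm end}_\xi\,u)=\xi$. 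Compactness of ${\rm end}_\xi\,u$ makes this sequence bounded, so it has a cluster point $(z,\gamma)\in\limsup_n{\rm end}\,u_n={\rm end}\,u$ with $\gamma\geq\xi$, hence $(z,\gamma)\in{\rm end}_\xi\,u$, contradicting $\overline{d_m}((z,\gamma),{\rm end}_\xi\,u)=\xi>0$.

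The proof involves no new ideas beyond those already deployed for Theorem \ref{fceg}; the only subtlety is bookkeeping. The definition of weak connectedness compact pair has been arranged precisely so that each potential witness $\{u_{n_k}\}$ of the failure of $H_{\rm end}$-convergence automatically triggers exactly the hypothesis needed to rerun the corresponding case of Theorem \ref{fceg} on that subsequence: compactness of ${\rm end}_\xi\,u$ alone in Case (\romannumeral1), and compactness of ${\rm end}_\xi\,u$ together with $\xi<S_u$ and eventual connectedness of ${\rm end}_\xi\,u_{n_k}$ in Case (\romannumeral2). The only place I expect to pause is verifying that the $\xi$ produced by clauses (ii-1)/(ii-2) can replace the $\varepsilon$ coming from the contradiction hypothesis; this is handled simply by noting $(x_k,\alpha_k)\in{\rm end}_\xi\,u$ and $(x_{n_k},\alpha_{n_k})\in{\rm end}_\xi\,u_{n_k}$, which follow because $\overline{d_m}(\,\cdot\,,{\rm end}\,u_{n_k})>\xi$ already forces the height coordinate to exceed $\xi$.
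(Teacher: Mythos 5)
Your proposal is correct and follows essentially the same route as the paper's own proof: the forward direction via Theorem \ref{hkg}, and the converse by contradiction, where the witnessing subsequence makes $A_{n_k}^1$ or $A_{n_k}^2$ nonempty so that clauses (\romannumeral2-1)/(\romannumeral2-2) supply exactly the $\xi$, compactness of ${\rm end}_\xi\, u$, and (in the second case) eventual connectedness of ${\rm end}_\xi\, u_{n_k}$ needed to rerun the two cases of Theorem \ref{fceg} along that subsequence. The bookkeeping details you flag (e.g.\ that $\overline{d_m}((x_k,\al_k),{\rm end}\, u_{n_k})>\xi$ forces $\al_k>\xi$) are handled the same way in the paper.
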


\begin{proof}
  The proof is similar to that of Theorem \ref{fceg}.

  From Theorem \ref{hkg}, $H_{\rm end} (u_n, u) \to 0   \Rightarrow \lim_{n\to\infty}^{(\Gamma)} u_n = u$.

Now we show that $\lim_{n\to\infty}^{(\Gamma)} u_n = u \Rightarrow H_{\rm end} (u_n, u) \to 0$.
We prove by contradiction.
Assume that $\lim_{n\to\infty}^{(\Gamma)} u_n = u$ but $H_{\rm end} (u_n, u) \not\to 0$.
Then
$H^*({\rm end}\, u, {\rm end}\, u_n) \not\to 0$ or $H^*( {\rm end}\, u_n, {\rm end}\, u) \not\to 0$.
We split the proof into two cases.

\emph{Case} (\romannumeral1) $H^*({\rm end}\, u, {\rm end}\, u_n) \not\to 0$.

In this case, $A_{n_k}^1 \not= \emptyset$.
Since the pair $\{u_n\}$, $u$
is a weak connectedness compact pair,
then there is a $\xi \in  A_{n_k}^1 $ with ${\rm end}_\xi\, u$ is compact.
So we can prove that there is a contradiction in a similar manner to that in
the case (\romannumeral1) of the proof of Theorem \ref{fceg}.

\emph{Case} (\romannumeral2) $H^*( {\rm end}\, u_n, {\rm end}\, u) \not\to 0$.

In this case,
 $ A_{n_k}^2 \not= \emptyset $.
Since
the pair $\{u_n\}$, $u$
is a weak connectedness compact pair, there is a
$\xi\in  A_{n_k}^2$ and $N_1 \in \mathbb{N}$
such that $\xi < S_u$, ${\rm end}_\xi\, u$ is compact, and
for all $n_k \geq N_1$, ${\rm end}_\xi \,u_{n_k}$ is connected in $\mathbb{R}^m \times [0,1]$.

Firstly we show the conclusions in the following clauses (\uppercase\expandafter{\romannumeral1}), (\uppercase\expandafter{\romannumeral2}) and (\uppercase\expandafter{\romannumeral3}).
\begin{description}
  \item[(\uppercase\expandafter{\romannumeral1})]
For each $n_k$, $k=1,2,\ldots$, there exists
$(x_{n_k}, \al_{n_k}) \in {\rm end}_\xi\, u_{n_k}$ with
$\overline{d_m}((x_{n_k}, \al_{n_k}), {\rm end}_\xi \, u) > \xi$.

  \item[(\uppercase\expandafter{\romannumeral2})]
There is an $N_2 \in \mathbb{N}$ such that for each $n\geq N_2$, there exists $(y_n, \beta_n) \in {\rm end}_\xi \,u_n$
with
$\overline{d_m}((y_n, \beta_n), {\rm end}_\xi\,u ) < \xi $.

  \item[(\uppercase\expandafter{\romannumeral3})]
Set $N_3 := \max\{N_1, N_2\}$.
For each $n_k \geq N_3$,
 there exists $(z_{n_k}, \gamma_{n_k}) \in {\rm end}_\xi \, u_{n_k}$ such that
\begin{equation}\label{eungcm}
\overline{d_m}((z_{n_k}, \gamma_{n_k}), {\rm end}_\xi\, u) = \xi.
\end{equation}

\end{description}

Note that $\xi\in  A_{n_k}^2$; that is,
 for each $u_{n_k}$, $k=1,2,\ldots$
\begin{equation}\label{gemu}
H^*({\rm end}\, u_{n_k}, {\rm end}\,u) > \xi.
\end{equation}
Let $k\in \mathbb{N}$.
From \eqref{gemu},
there exists
$(x_{n_k}, \al_{n_k}) \in {\rm end}\, u_{n_k}$ such that
$$
\overline{d_m}((x_{n_k}, \al_{n_k}), {\rm end}_\xi \, u) \geq \overline{d_m}((x_{n_k}, \al_{n_k}), {\rm end}\, u) > \xi.
$$
Clearly
$\al_{n_k} > \xi$, and then
$(x_{n_k}, \al_{n_k}) \in {\rm end}_\xi \, u_{n_k} $.
Thus (\uppercase\expandafter{\romannumeral1}) is true.

Since $\xi < S_u$, we can
take $y \in \mathbb{R}^m$ with $u(y)>\xi>0$. Set $u(y)=\mu$.
As $(y, \mu) \in {\rm end}\,u$ and ${\rm end}\,u = \liminf_{n\to\infty} {\rm end}\, u_n$, we can find a sequence $\{(y_n, \beta_n)\}$
satisfying that
$(y_n, \beta_n) \in {\rm end}\, u_n$ for
$n=1,2,\ldots$
and
$\{(y_n, \beta_n)\}$ converges to $(y, \mu)$.

Hence
 there is a $N_2$ such that for all $n\geq N_2$,
$\overline{d_m}((y_n, \beta_n), (y, \mu)) < \min\{\mu - \xi,\, \xi\}$.
Let $n\in \mathbb{N}$ with $n\geq N_2$.
Then $\beta_n > \xi$,
and therefore $(y_n, \beta_n) \in {\rm end}_\xi \,u_n$.
Note that $(y, \mu) \in  {\rm end}_\xi\,u$. So $\overline{d_m}((y_n, \beta_n), {\rm end}_\xi \,u ) \leq \overline{d_m}((y_n, \beta_n), (y, \mu)) < \xi$.
Thus (\uppercase\expandafter{\romannumeral2}) is true.

(\uppercase\expandafter{\romannumeral3}) follows from (\uppercase\expandafter{\romannumeral1}), (\uppercase\expandafter{\romannumeral2}) and the connectedness of ${\rm end}_\xi \,u_{n_k}$ when $n_k\geq N_3$. The proof of (\uppercase\expandafter{\romannumeral3})
is the same as that of
the clause
 (\uppercase\expandafter{\romannumeral3}) in the proof of Theorem \ref{fceg}.

Now using (\uppercase\expandafter{\romannumeral3}) and the compactness of ${\rm end}_\xi\, u$, we can have a contradiction.
The proof is the same as the counterpart in the proof of Theorem \ref{fceg}.

\end{proof}

\begin{re}\label{hwc}
  {\rm
Let $\{u_n\}$ be a fuzzy set sequence in $F_{USC} (\mathbb{R}^m)$ and let $u$ be a fuzzy set in $F_{USC}(\mathbb{R}^m) \setminus \{\emptyset_{F(\mathbb{R}^m)}\}$.
It is easy to see that
if $H_{\rm end} (u_n, u) \to 0$, then the pair $\{u_n\}$, $u$
is a weak connectedness compact pair.
}
\end{re}

Here we mention that
for $u \in F_{USC} (\mathbb{R}^m) \setminus \{\emptyset_{F(\mathbb{R}^m)}\} $,
and
fuzzy set sequence
 $\{u_n, n=1,2,\ldots\}$ in $F_{USC} (\mathbb{R}^m)$,
$H_{\rm end}(u_n, u) \to 0$ does not imply that
$u\in F_{USCG} (\mathbb{R}^m)$
or
$\{u_n\}$
satisfies the connectedness condition.

There exists
a sequence $\{u_n\}$
in $F_{USC}(\mathbb{R}^m)$ and an element $u$ in $F_{USC}(\mathbb{R}^m)\setminus \{\emptyset_{F(\mathbb{R}^m)}\}$
satisfying the following conditions (\romannumeral1)-(\romannumeral3).
\\
(\romannumeral1)
$H_{\rm end}(u_n, u) \to 0$, and then by Remark \ref{hwc}, the pair $\{u_n\}$, $u$
is a weak connectedness compact pair.
\\
(\romannumeral2) $u\notin F_{USCG} (\mathbb{R}^m)$.
\\
(\romannumeral3) $\{u_n\}$
does not satisfy the connectedness condition.

The following Examples \ref{wcregu} and \ref{wcreguf} are such examples.

\begin{eap} \label{wcregu}
{\rm

Let $u$ be a fuzzy set in $F^1_{USC}(\mathbb{R}) \setminus  \{\emptyset_{F(\mathbb{R})}\}$ defined by putting:
\[
[u]_\al=
\left\{
  \begin{array}{ll}
  \{1\}, & \al \in (0.6, 1],\\
  \mbox{} (-\infty,-1] \cup [1, +\infty) &  \al\in [0,0.6].
  \end{array}
\right.
\]
For $n=1,2,\ldots$,
let
$u_n=u$.
We have the following conclusions:
\\
(\romannumeral1)
$H_{\rm end}(u_n, u) \to 0$ since $u_n = u$ for $n=1,2,\ldots$. So the pair $\{u_n\}$, $u$
is a weak connectedness compact pair.
\\
(\romannumeral2) $u\notin F_{USCG} (\mathbb{R})$.
\\
(\romannumeral3) $\{u_n\}$
does not satisfy the connectedness condition because ${\rm end}_{r}\, u$ is not connected in $\mathbb{R} \times [0,1]$ when $r\in  [0,0.6]$. Moreover, it can be seen that each subsequence $\{u_{n_k}\}$ of $\{u_n\}$
does not satisfy the connectedness condition.

}
\end{eap}

\begin{eap} \label{wcreguf}
  {\rm

Let $u$ be a fuzzy set in $F^1_{USC}(\mathbb{R}) \setminus  \{\emptyset_{F(\mathbb{R})}\}$ defined by putting:
\[
[u]_\al=
\left\{
  \begin{array}{ll}
  \{1\}, & \al \in (0.6, 1],\\
  \mbox{} [1,3] \cup [4, +\infty) &  \al\in [0,0.6].
  \end{array}
\right.
\]
For $n=1,2,\ldots$,
let
$u_n$ be the fuzzy set in $F^1_{USC}(\mathbb{R})$ defined by putting
\[
[u_n]_\al=
\left\{
  \begin{array}{ll}
  \{1\}, & \al \in (0.6, 1],\\
  \mbox{} [1,3] \cup [4, +\infty) &  \al\in (\frac{1}{3n}, 0.6]\\
  \mbox{} [-\infty,-4] \cup [-3, -1] \cup [1,3] \cup [4, +\infty) &  \al\in [0, \frac{1}{3n}].
  \end{array}
\right.
\]
We have the following conclusions:
\\
(\romannumeral1)
$H_{\rm end}(u_n, u) \to 0$ since $H_{\rm end}(u_n, u) = \frac{1}{3n}$ for $n=1,2,\ldots$. So the pair $\{u_n\}$, $u$
is a weak connectedness compact pair.
\\
(\romannumeral2) $u\notin F_{USCG} (\mathbb{R})$.
\\
(\romannumeral3) $\{u_n\}$
does not satisfy the connectedness condition because for each $n=1,2,\ldots$,
${\rm end}_{r}\, u_n$ is not connected in $\mathbb{R} \times [0,1]$ when $r\in  [0,0.6]$. Indeed, it can be seen that each subsequence $\{u_{n_k}\}$ of $\{u_n\}$
does not satisfy the connectedness condition.

}
\end{eap}

\begin{re}
  {\rm

 Let $\{u_n\}$ be a fuzzy set sequence in $F_{USC} (\mathbb{R}^m)$ satisfying the connectedness condition and let $u$ be a fuzzy set in $F_{USCG}(\mathbb{R}^m) \setminus \{\emptyset_{F(\mathbb{R}^m)}\}$.
Then clearly
the pair $\{u_n\}$, $u$
is a weak connectedness compact pair.
So Theorem \ref{fceg} is a corollary of Theorem \ref{fcegun}.

}
\end{re}

\section{Some discussion}

\begin{tm} \cite{huang719}\label{tbe}
  Let $(X,d)$ be a metric space and $\mathcal{D}\subseteq K(X)$.
   Then $\mathcal{D}$ is totally bounded in $(K(X), H)$
if and only if
$\mathbf{ D} =   \bigcup \{C:  C \in  \mathcal{D} \} $ is totally bounded in $(X,d)$.
\end{tm}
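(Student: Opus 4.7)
The plan is to establish each direction of the equivalence by constructing explicit finite $\varepsilon$-nets directly from the definition of total boundedness.

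For the forward implication, suppose $\mathcal{D}$ is totally bounded in $(K(X), H)$. Given $\varepsilon > 0$, I would first choose a finite $(\varepsilon/2)$-net $C_1, \ldots, C_n$ for $\mathcal{D}$. Since each $C_i$ is compact in $(X,d)$, each is totally bounded, so each admits a finite $(\varepsilon/2)$-net $F_i \subseteq X$; set $F = \bigcup_{i=1}^n F_i$, which is a finite subset of $X$. For any $x \in \mathbf{D}$, pick $C \in \mathcal{D}$ with $x \in C$ and some $C_i$ with $H(C, C_i) < \varepsilon/2$. Then $d(x, C_i) < \varepsilon/2$, so there is $y \in C_i$ with $d(x,y) < \varepsilon/2$; since $y$ lies within $\varepsilon/2$ of some point of $F_i \subseteq F$, the triangle inequality gives $d(x, F) < \varepsilon$, and thus $F$ is a finite $\varepsilon$-net for $\mathbf{D}$.

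For the reverse implication, suppose $\mathbf{D}$ is totally bounded. Given $\varepsilon > 0$, choose a finite $(\varepsilon/3)$-net $\{x_1, \ldots, x_n\} \subseteq X$ for $\mathbf{D}$. The key device is to associate to each $C \in \mathcal{D}$ the finite (hence compact) set
\[
F(C) = \{x_i : d(x_i, C) < \varepsilon/3\} \in K(X).
\]
I would verify $H(C, F(C)) \leq \varepsilon/3$ for every $C \in \mathcal{D}$: by definition of $F(C)$ we have $H^{*}(F(C), C) \leq \varepsilon/3$, while for $y \in C \subseteq \mathbf{D}$ there is some $x_i$ with $d(y, x_i) < \varepsilon/3$, and then $d(x_i, C) \leq d(x_i, y) < \varepsilon/3$ forces $x_i \in F(C)$, giving $H^{*}(C, F(C)) \leq \varepsilon/3$. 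Since $F(C) \subseteq \{x_1, \ldots, x_n\}$, only finitely many distinct values of $F(C)$ arise, and these finitely many elements of $K(X)$ form an $\varepsilon$-net in $(K(X), H)$ for $\mathcal{D}$.

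The argument is essentially a standard exercise on total boundedness; I do not foresee a serious obstacle. The only point requiring mild care is that in the reverse direction the centers of the $\varepsilon$-net are finite subsets of $X$ rather than members of $\mathcal{D}$ itself, but this is legitimate because finite subsets of $X$ belong to $K(X)$ and total boundedness only requires the net to lie in the ambient metric space $(K(X), H)$.
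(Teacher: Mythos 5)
Your proof is correct. Note that the paper itself gives no proof of Theorem \ref{tbe}; it is imported from the reference \cite{huang719}, so there is nothing in this document to compare your argument against. Both directions of your argument are sound: the forward direction correctly combines a finite $(\varepsilon/2)$-net for $\mathcal{D}$ in $(K(X),H)$ with finite $(\varepsilon/2)$-nets for each compact center, and the reverse direction's device of assigning to each $C$ the nonempty finite set $F(C)=\{x_i: d(x_i,C)<\varepsilon/3\}$, with only finitely many possible values of $F(C)$, is the standard way to prove this equivalence. Your closing remark is also right, and if one prefers net centers drawn from $\mathcal{D}$ itself, one can simply replace each occurring value $F$ of $F(C)$ by a representative $C_F\in\mathcal{D}$ with $F(C_F)=F$, at the cost of doubling the radius to $2\varepsilon/3$, which still lies below $\varepsilon$.
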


\begin{tm} \cite{huang719} \label{rce}
  Let $(X,d)$ be a metric space and $\mathcal{D}\subseteq K(X)$. Then $\mathcal{D} $ is relatively compact in $(K(X), H)$
if and only if
$\mathbf{ D} =   \bigcup \{C:  C \in  \mathcal{D} \} $ is relatively compact in $(X,d)$.
\end{tm}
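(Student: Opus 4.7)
The plan is to characterise relative compactness sequentially---a subset $A$ of a metric space is relatively compact precisely when every sequence in $A$ admits a subsequence convergent in the ambient space---and to lean on Theorem~\ref{tbe} for total boundedness.

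For the forward implication, I would take any sequence $\{x_n\}\subseteq\mathbf{D}$, pick $C_n\in\mathcal{D}$ with $x_n\in C_n$, and extract, by the assumed relative compactness of $\mathcal{D}$, a subsequence with $H(C_{n_k},C)\to 0$ for some $C\in K(X)$. Then $d(x_{n_k},C)\le H^*(C_{n_k},C)\to 0$; since $C$ is compact, the infimum defining $d(x_{n_k},C)$ is attained at some $y_{n_k}\in C$, and a further subsequence of $\{y_{n_k}\}$ converges to some $y\in C\subseteq X$, forcing the matched sub-subsequence of $\{x_{n_k}\}$ to converge to the same $y$. Hence $\mathbf{D}$ is sequentially, and therefore, relatively compact in $(X,d)$.

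For the converse, I would set $Y:=\overline{\mathbf{D}}$, which is compact, and observe that every $C\in\mathcal{D}$ is contained in $Y$, so $\mathcal{D}\subseteq K(Y)$. Theorem~\ref{tbe} gives that $\mathcal{D}$ is totally bounded in $(K(X),H)$, so any sequence $\{C_n\}\subseteq\mathcal{D}$ has an $H$-Cauchy subsequence $\{C_{n_k}\}$; the task reduces to producing an $H$-limit in $K(X)$. I would take as candidate the Kuratowski upper limit
\[
C \;:=\; \limsup_{k\to\infty} C_{n_k} \;=\; \bigcap_{k=1}^{\infty}\overline{\bigcup_{j\ge k} C_{n_j}},
\]
which is closed in $Y$ and hence a compact subset of $X$.

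The main obstacle is the last step: simultaneously checking that $C\ne\emptyset$ and that $H(C_{n_k},C)\to 0$. Non-emptiness comes from compactness of $Y$: choose any $x_k\in C_{n_k}$, extract a subsequence converging in $Y$, and its limit lies in $C$ by construction. The inclusion $H^*(C_{n_k},C)\to 0$ requires a two-stage selection: Cauchyness supplies, for all $j\ge k$ sufficiently large, approximants of each $z\in C_{n_k}$ inside the later sets, and compactness of $Y$ then produces a cluster point of such approximants lying in $C$ and close to $z$. The other inclusion $H^*(C,C_{n_k})\to 0$ follows more directly: any $x\in C$ is by definition a limit of selections $x_j\in C_{n_j}$, and Cauchyness converts this into uniform $\varepsilon$-approximation by points of $C_{n_k}$ for all large $k$. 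Throughout, compactness of $Y$ is what licenses every subsequence extraction.
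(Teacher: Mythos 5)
Your proposal is correct. Note first that the paper does not actually prove Theorem \ref{rce}; it is imported from \cite{huang719} without proof, so there is nothing internal to compare against. On its own terms your argument is sound. The forward direction is the standard selection argument and is complete: $d(x_{n_k},C)\le H^*(C_{n_k},C)\to 0$ together with compactness of the limit set $C$ yields a convergent sub-subsequence of $\{x_{n_k}\}$, and sequential relative compactness suffices in a metric space. For the converse, your combination of Theorem \ref{tbe} (to get an $H$-Cauchy subsequence) with the identification of the limit as the Kuratowski upper limit $C=\bigcap_k\overline{\bigcup_{j\ge k}C_{n_j}}$ is exactly the classical proof that $(K(Y),H)$ is complete (and compact) for compact $Y$; the two estimates you sketch both go through --- for $H^*(C,C_{n_k})$ via a single triangle inequality through a nearby $C_{n_j}$, and for $H^*(C_{n_k},C)$ via choosing, for each $j\ge K$, a point $z_j\in C_{n_j}$ with $d(z,z_j)<\varepsilon$ and taking a cluster point in the compact set $Y$, which necessarily lies in $C$. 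The one shortcut worth mentioning: once you observe $\mathcal{D}\subseteq K(Y)$ with $Y=\overline{\mathbf{D}}$ compact, you could simply invoke the well-known compactness of $(K(Y),H)$ and the fact that $K(Y)$ is closed in $(K(X),H)$, rather than re-deriving it; but your explicit construction is self-contained and arguably more in the spirit of a paper that also proves Theorem \ref{tbe} from scratch.
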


\begin{tm} \cite{huang719} \label{come}
 Let $(X,d)$ be a metric space and $\mathcal{D}\subseteq K(X)$. Then
    the following are equivalent:
    \\
    (\romannumeral1) \ $\mathcal{D} $ is compact in $(K(X), H)$;
        \\
 (\romannumeral2) \
$\mathbf{ D} =   \bigcup \{C:  C \in  \mathcal{D} \} $ is relatively compact in $(X, d)$
and
$\mathcal{D} $ is closed in $(K(X), H)$;
    \\
 (\romannumeral3) \
$\mathbf{ D} =   \bigcup \{C:  C \in  \mathcal{D} \} $ is compact in $(X, d)$
and
$\mathcal{D} $ is closed in $(K(X), H)$.
\end{tm}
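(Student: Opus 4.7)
The plan is to establish the cycle $\mathrm{(i)} \Rightarrow \mathrm{(ii)} \Rightarrow \mathrm{(iii)} \Rightarrow \mathrm{(i)}$, leveraging Theorem \ref{rce} at each step. First, for $\mathrm{(i)} \Rightarrow \mathrm{(ii)}$: since compactness in a metric space automatically implies closedness and relative compactness, $\mathcal{D}$ is closed in $(K(X), H)$, and applying Theorem \ref{rce} to the relative compactness of $\mathcal{D}$ yields that $\mathbf{D}$ is relatively compact in $(X,d)$. Similarly, for $\mathrm{(iii)} \Rightarrow \mathrm{(i)}$: compactness of $\mathbf{D}$ gives relative compactness of $\mathbf{D}$, so by Theorem \ref{rce} we get relative compactness of $\mathcal{D}$ in $(K(X), H)$, which combined with the hypothesized closedness of $\mathcal{D}$ upgrades to compactness.

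The substantive content is in $\mathrm{(ii)} \Rightarrow \mathrm{(iii)}$, which reduces to showing that $\mathbf{D}$ is closed in $(X, d)$ under the hypotheses of $\mathrm{(ii)}$. I would take an arbitrary sequence $\{x_n\} \subseteq \mathbf{D}$ with $x_n \to x \in X$ and, for each $n$, choose $C_n \in \mathcal{D}$ with $x_n \in C_n$. By Theorem \ref{rce}, the relative compactness of $\mathbf{D}$ in $(X, d)$ forces the relative compactness of $\mathcal{D}$ in $(K(X), H)$, so some subsequence $\{C_{n_k}\}$ converges in the Hausdorff metric to a set $C \in K(X)$. Since $\mathcal{D}$ is closed in $(K(X), H)$, we conclude $C \in \mathcal{D}$. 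Then the triangle-type estimate
\[
d(x, C) \;\leq\; d(x, x_{n_k}) + d(x_{n_k}, C) \;\leq\; d(x, x_{n_k}) + H(C_{n_k}, C) \;\longrightarrow\; 0,
\]
together with the closedness of $C \in K(X)$, yields $x \in C \subseteq \mathbf{D}$, establishing closedness of $\mathbf{D}$. Combined with the relative compactness assumed in $\mathrm{(ii)}$, this gives compactness of $\mathbf{D}$.

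I do not anticipate a genuine obstacle; the theorem is essentially a repackaging of Theorem \ref{rce} once one observes the propagation of closedness from $\mathcal{D}$ to $\mathbf{D}$ via the Hausdorff-convergence argument just sketched. The only point that deserves a moment of care is verifying that the limit set $C$ sits inside $\mathcal{D}$ rather than merely in the closure of $\mathcal{D}$ in $(K(X), H)$, which is exactly where the closedness hypothesis on $\mathcal{D}$ is invoked.
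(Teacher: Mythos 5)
Your proof is correct. Note that the paper itself states Theorem \ref{come} as a citation from \cite{huang719} and gives no proof, so there is nothing to compare against; your derivation is the natural one, reducing everything to Theorem \ref{rce} plus the standard facts that compact $=$ closed $+$ relatively compact in a metric space. The one substantive step, the closedness of $\mathbf{D}$ in (\romannumeral2)$\Rightarrow$(\romannumeral3), is handled correctly: the estimate $d(x_{n_k},C)\leq H^{*}(C_{n_k},C)\leq H(C_{n_k},C)$ is valid because $x_{n_k}\in C_{n_k}$, and the closedness hypothesis on $\mathcal{D}$ is invoked exactly where it is needed, namely to place the Hausdorff limit $C$ inside $\mathcal{D}$ rather than merely in its closure.
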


Let $u\in F_{USC}(X)$.
Define $\bm{u^e}= {\rm end}\, u $.
Let $A$ be a subset of $F_{USC}(X)$.
Define
$\bm{A^e}= \{u^e:  u\in A\}$.
Clearly $F_{USC}(X)^e\subseteq C(X\times [0,1])$.

Define
$g: (F_{USC}(X), H_{\rm end}) \to (C(X\times [0,1]), H)$ given by
$g(u) = {\rm end}\, u$.
Then
\begin{itemize}
\item
$g$ is an isometric embedding
of
 $(F_{USC}(X), H_{\rm end}) $ in $(C(X\times [0,1]), H)$,

  \item $g(F_{USC}(X)) = F_{USC}(X)^e$, and

\item
$(F_{USC}(X), H_{\rm end}) $ is isometric
to
$(F_{USC}(X)^e, H)$.
\end{itemize}

The following representation theorem for $F_{USC}(X)^e$ follows immediately
from Proposition \ref{fus}.

\begin{pp} \label{repu}
Let $U$ be a subset of $X\times [0,1]$.
Then $U\in F_{USC}(X)^e$
if and only if the following properties (\romannumeral1)-(\romannumeral3) are true.
\\
(\romannumeral1) For each $\alpha \in (0, 1]$, $\langle U \rangle_\al \in C(X) \cup \{\emptyset\}$.
\\
(\romannumeral2) \
For each $\alpha \in (0, 1]$,
$\langle U\rangle_\al = \bigcap_{\beta<\al} \langle U\rangle_\beta $.
\\
(\romannumeral3) \ $\langle U\rangle_0 = X $.
\end{pp}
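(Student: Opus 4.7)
The plan is to prove Proposition \ref{repu} as a direct translation of Proposition \ref{fus} into the language of subsets of $X\times[0,1]$, together with one easy bookkeeping step at level $\al=0$. Note that $\langle U\rangle_\al$ here should be read in the same way as the slice notation $D_\al$ introduced earlier, i.e.\ $\langle U\rangle_\al = \{x\in X : (x,\al)\in U\}$.

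For the necessity direction, I would take $U = {\rm end}\, u$ for some $u\in F_{USC}(X)$, and unpack the definition of ${\rm end}\, u$ to see that $\langle U\rangle_\al = \{x\in X : u(x)\geq \al\}$. For $\al\in(0,1]$ this coincides with $[u]_\al$, and so it lies in $C(X)\cup\{\emptyset\}$ by upper semi-continuity of $u$, giving (\romannumeral1). Property (\romannumeral2) is then immediate from the first half of Theorem \ref{rep} applied to $u$. Property (\romannumeral3) holds because $u(x)\geq 0$ for every $x\in X$, so $\langle U\rangle_0 = X$.

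For the sufficiency direction, I would start from a set $U\subseteq X\times[0,1]$ satisfying (\romannumeral1)--(\romannumeral3) and feed the family $\{\langle U\rangle_\al : \al\in(0,1]\}$ into Proposition \ref{fus}. Conditions (\romannumeral1) and (\romannumeral2) are precisely the hypotheses of that proposition (closedness and the intersection identity), so it produces a unique $u\in F_{USC}(X)$ with $[u]_\al = \langle U\rangle_\al$ for every $\al\in(0,1]$. It then remains to check the set equality ${\rm end}\, u = U$ slice by slice: for $t\in(0,1]$ one has $(x,t)\in{\rm end}\, u \Leftrightarrow u(x)\geq t \Leftrightarrow x\in[u]_t = \langle U\rangle_t \Leftrightarrow (x,t)\in U$, while for $t=0$ both ${\rm end}\, u$ and $U$ contain $X\times\{0\}$ by the observation that $u(x)\geq 0$ always and by condition (\romannumeral3), respectively.

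There is no real obstacle: the proposition is essentially a reformulation of Proposition \ref{fus} on the product space, and the only substantive point is that condition (\romannumeral2) is stated only for $\al\in(0,1]$, so the level $\al=0$ has to be controlled separately, which is exactly what condition (\romannumeral3) does. The mildly subtle book-keeping step will be to make sure one does not confuse $[u]_0 = \overline{\{u>0\}}$ (the $\al=0$ cut in the paper's convention) with $\langle {\rm end}\, u\rangle_0 = X$; these are different sets, and the proposition characterises ${\rm end}\, u$ using the latter.
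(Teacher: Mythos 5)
Your proof is correct and takes essentially the same route as the paper: the paper states Proposition \ref{repu} without an explicit proof, remarking only that it ``follows immediately from Proposition \ref{fus}'', and your argument is exactly the detailed unpacking of that derivation (apply Proposition \ref{fus} to the family $\{\langle U\rangle_\al:\al\in(0,1]\}$ and check the slices of ${\rm end}\,u$ against $U$), including the correct separate treatment of the $\al=0$ level via condition (\romannumeral3) and the observation that $\langle {\rm end}\,u\rangle_0=X$ differs from $[u]_0$.
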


\begin{pp}
  \label{ucen}
Let $U\in C(X\times [0,1])$. Then the following (\romannumeral1) and (\romannumeral2)
are equivalent.
\\
(\romannumeral1) For each $\alpha$ with $0 < \al \leq 1$,
$\langle U\rangle_\al = \bigcap_{\beta<\al} \langle U\rangle_\beta $.
\\
(\romannumeral2)
For each $\alpha,\beta$ with $0\leq \beta < \al \leq 1$,
$\langle U\rangle_\al \subseteq \langle U\rangle_\beta $.
\end{pp}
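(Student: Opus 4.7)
The plan is to prove the two implications separately, using the closedness of $U$ in $(X\times[0,1], \overline{d})$ for the nontrivial direction.

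For (ii) $\Rightarrow$ (i), I would fix $\al \in (0,1]$ and show both inclusions. The inclusion $\langle U\rangle_\al \subseteq \bigcap_{\beta<\al}\langle U\rangle_\beta$ is immediate from (ii): for every $\beta < \al$, $\langle U\rangle_\al \subseteq \langle U\rangle_\beta$, so intersect over $\beta$. For the reverse inclusion, I would take $x \in \bigcap_{\beta<\al}\langle U\rangle_\beta$, pick any sequence $\beta_n \in [0,\al)$ with $\beta_n \to \al$, and note that $(x,\beta_n) \in U$ for each $n$ while $(x,\beta_n) \to (x,\al)$ in $(X\times[0,1], \overline{d})$. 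The closedness of $U$ then forces $(x,\al) \in U$, i.e. $x \in \langle U\rangle_\al$.

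For (i) $\Rightarrow$ (ii), I would fix $0 \leq \beta < \al \leq 1$. Since $\beta < \al$ forces $\al > 0$, condition (i) applies and yields
\[
\langle U\rangle_\al \;=\; \bigcap_{\gamma<\al}\langle U\rangle_\gamma \;\subseteq\; \langle U\rangle_\beta,
\]
because $\beta$ is one of the indices $\gamma$ in the intersection.

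The only substantive step is the use of closedness of $U$ in the reverse inclusion of (ii) $\Rightarrow$ (i); everything else is set-theoretic bookkeeping. Note that the hypothesis $U \in C(X\times[0,1])$ is essential there: without closedness, one could have $(x,\beta) \in U$ for all $\beta < \al$ but $(x,\al) \notin U$, breaking (i). I do not anticipate a genuine obstacle beyond choosing the approximating sequence $\beta_n \uparrow \al$ correctly (any such sequence works, and the case $\al = 0$ in (ii) is vacuous since no $\beta < 0$ lies in $[0,1]$).
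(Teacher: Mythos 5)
Your proposal is correct and follows essentially the same route as the paper: the implication (i)$\Rightarrow$(ii) is the elementary observation that $\beta$ is one of the indices in the intersection, and (ii)$\Rightarrow$(i) uses the closedness of $U$ to pass from $(x,\beta_n)\in U$ with $\beta_n\uparrow\al$ to $(x,\al)\in U$. No gaps; the paper merely states (i)$\Rightarrow$(ii) as obvious where you spell it out.
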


\begin{proof} The proof is routine.
  (\romannumeral1)$\Rightarrow$(\romannumeral2) is obviously.

  Suppose that (\romannumeral2) is true.
 To show that (\romannumeral1) is true, let $\al\in (0,1]$.
      From (\romannumeral2), $\langle U\rangle_\al \subseteq \bigcap_{\beta<\al} \langle U\rangle_\beta $.
So we only need to prove
that
$\langle U\rangle_\al \supseteq \bigcap_{\beta<\al} \langle U\rangle_\beta $.
   To do this, let
   $x\in \bigcap_{\beta<\al} \langle U\rangle_\beta$.
   This means that $(x,\beta) \in U$ for $\beta\in [0,\al)$.
   Since $\lim_{\beta\to \al-} \overline{d}((x,\beta), (x, \al)) = 0$, from the closedness of
$U$, it follows that $(x, \al)\in U$.
Hence $x\in \langle U \rangle_\al$.
Thus $\langle U\rangle_\al \supseteq \bigcap_{\beta<\al} \langle U\rangle_\beta $ from the arbitrariness of $x$ in $\bigcap_{\beta<\al} \langle U\rangle_\beta$.
   So
 (\romannumeral2)$\Rightarrow$(\romannumeral1).

\end{proof}

\begin{pp} \label{repc}
Let $U\in C(X\times [0,1])$.
Then
$U\in F_{USC}(X)^e$
if and only if $U$ has the following properties:
\\
(\romannumeral1) \
for each $\alpha,\beta$ with $0\leq \beta < \al \leq 1$,
$\langle U\rangle_\al \subseteq \langle U\rangle_\beta $, and
\\
(\romannumeral2) \ $\langle U\rangle_0 = X$.
\end{pp}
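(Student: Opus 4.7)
The plan is to reduce Proposition \ref{repc} to a direct combination of the already proved Proposition \ref{repu} and Proposition \ref{ucen}, using the extra hypothesis that $U$ is closed in $X\times[0,1]$ to dispose of the remaining closedness condition on the $\al$-sections.

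For the forward direction, I would assume $U \in F_{USC}(X)^e$. By Proposition \ref{repu}, the family $\{\langle U\rangle_\al:\al\in(0,1]\}$ satisfies $\langle U\rangle_\al = \bigcap_{\beta<\al}\langle U\rangle_\beta$ and $\langle U\rangle_0 = X$. An immediate application of Proposition \ref{ucen} (in the easy direction (\romannumeral1)$\Rightarrow$(\romannumeral2)) converts the intersection identity into the monotonicity $\langle U\rangle_\al \subseteq \langle U\rangle_\beta$ for $0\le\beta<\al\le 1$, which is condition (\romannumeral1) of Proposition \ref{repc}; condition (\romannumeral2) is verbatim.

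For the converse, assume $U\in C(X\times[0,1])$ satisfies the monotonicity (\romannumeral1) and $\langle U\rangle_0=X$. I would first verify the closedness condition (\romannumeral1) of Proposition \ref{repu}, namely that $\langle U\rangle_\al \in C(X)\cup\{\emptyset\}$ for every $\al\in(0,1]$. This is the only point needing a small argument: for fixed $\al$, the slice $U\cap (X\times\{\al\})$ is closed in $X\times\{\al\}$ because $U$ is closed in $X\times[0,1]$, and since the embedding $x\mapsto(x,\al)$ is an isometry from $(X,d)$ onto $(X\times\{\al\},\overline{d})$, it follows that $\langle U\rangle_\al$ is closed in $(X,d)$ (and possibly empty). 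Next, applying Proposition \ref{ucen} in the direction (\romannumeral2)$\Rightarrow$(\romannumeral1), the monotonicity assumption upgrades to $\langle U\rangle_\al=\bigcap_{\beta<\al}\langle U\rangle_\beta$ for all $\al\in(0,1]$, which is condition (\romannumeral2) of Proposition \ref{repu}. Together with $\langle U\rangle_0=X$, this lets us invoke Proposition \ref{repu} to conclude $U\in F_{USC}(X)^e$.

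There is no substantive obstacle: the only nontrivial ingredient is the passage from $U$ being closed in $X\times[0,1]$ to each $\al$-section being closed in $X$, and even that is a one-line consequence of the slicing isometry. Everything else is bookkeeping to quote Propositions \ref{repu} and \ref{ucen}.
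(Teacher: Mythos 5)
Your proposal is correct and follows essentially the same route as the paper: observe that closedness of $U$ in $X\times[0,1]$ forces each section $\langle U\rangle_\al$ to be closed in $X$, then combine Propositions \ref{repu} and \ref{ucen}. The paper states this more tersely, but the content is identical.
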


\begin{proof}
Since
 $U\in C(X\times [0,1])$, then clearly $\langle U \rangle_\al \in C(X) \cup \{ \emptyset \}$
for all $\al\in [0,1]$.
Thus the desired result follows immediately from Propositions \ref{repu} and \ref{ucen}.

\end{proof}

As a shorthand, we denote the sequence $x_1, x_2, \ldots, x_n, \ldots$ by $\{x_n\}$.

\begin{pp}\label{usce}
$F_{USC}(X)^e$ is a closed subset of $(C(X \times [0,1]), H)$.
\end{pp}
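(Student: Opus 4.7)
The plan is to take a sequence $\{U_n\}$ in $F_{USC}(X)^e$ with $H(U_n,U)\to 0$ for some $U\in C(X\times[0,1])$, and verify that $U$ satisfies the two characterizing conditions in Proposition \ref{repc}. Closedness of $U$ is already given, so I only need to check that $\langle U\rangle_0 = X$ and that $\langle U\rangle_\al \subseteq \langle U\rangle_\beta$ whenever $0\leq\beta<\al\leq 1$. The main tool is Theorem \ref{hkg}, which upgrades Hausdorff convergence to Kuratowski convergence: $U = \liminf_{n\to\infty} U_n = \limsup_{n\to\infty} U_n$.

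For $\langle U\rangle_0 = X$, pick any $x\in X$. Since each $U_n\in F_{USC}(X)^e$, Proposition \ref{repc} gives $\langle U_n\rangle_0 = X$, so $(x,0)\in U_n$ for every $n$. The constant sequence $(x,0)$ lies in $U_n$ and converges to $(x,0)$, so $(x,0)\in\limsup_{n\to\infty} U_n = U$, i.e.\ $x\in\langle U\rangle_0$.

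For the nesting condition, fix $0\leq\beta<\al\leq 1$ and let $x\in\langle U\rangle_\al$, so that $(x,\al)\in U = \liminf_{n\to\infty} U_n$. There exists a sequence $(x_n,\al_n)\in U_n$ with $(x_n,\al_n)\to (x,\al)$ under $\overline{d}$, which forces $\al_n\to\al$ and $d(x_n,x)\to 0$. Since $\al>\beta$, there is an $N$ with $\al_n>\beta$ for all $n\geq N$. Applying the nesting property of $U_n$ (Proposition \ref{repc}(i) for $U_n$) gives $\langle U_n\rangle_{\al_n}\subseteq\langle U_n\rangle_\beta$, so $x_n\in\langle U_n\rangle_\beta$, i.e.\ $(x_n,\beta)\in U_n$ for $n\geq N$. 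Because $(x_n,\beta)\to (x,\beta)$, we conclude $(x,\beta)\in\limsup_{n\to\infty} U_n = U$, hence $x\in\langle U\rangle_\beta$. This establishes the inclusion, and therefore $U\in F_{USC}(X)^e$ by Proposition \ref{repc}.

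I do not expect any genuine obstacle here: the argument is essentially routine once Theorem \ref{hkg} and the level-set characterization in Proposition \ref{repc} are in hand. The only point that deserves a brief word of care is the innocuous-looking step ``$\al_n\to\al$ implies $\al_n>\beta$ eventually,'' which is where strict inequality $\beta<\al$ is used; had we only assumed $\beta\leq\al$ the argument would collapse, but the statement of Proposition \ref{repc}(i) is exactly tailored to strict inequalities.
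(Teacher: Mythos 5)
Your proof is correct and follows essentially the same route as the paper's: both invoke Theorem \ref{hkg} to pass from $H$-convergence to Kuratowski convergence and then verify the two conditions of Proposition \ref{repc}, with the nesting argument (lift $(x,\al)\in\liminf U_n$ to approximants $(x_n,\al_n)$, note $\al_n>\beta$ eventually, drop to level $\beta$, and land in $\limsup U_n=U$) being word-for-word the paper's argument. No issues.
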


\begin{proof}
  Let $\{u_n^e:   n=1,2,\ldots \}$ be a sequence in $F_{USC}(X)^e$
with
$\{u_n^e\}$ converging to $U$ in $(C(X \times [0,1]), H)$.
To show the desired result, we
only need to show
that $U \in F_{USC}(X)^e$.

We claim that
\\
(\romannumeral1) \
for each $\alpha,\beta$ with $0\leq \beta < \alpha \leq 1$,
$\langle U\rangle_\al \subseteq \langle U\rangle_\beta $;
\\
(\romannumeral2) \ $\langle U\rangle_0 = X $.

To show (\romannumeral1),
let
 $\alpha,\beta$ in $[0,1]$ with $\beta < \alpha$,
and let
 $x \in \langle U\rangle_\al$, i.e. $(x,\al) \in U$.
By Theorem \ref{hkg},
$\lim_{n\to \infty}^{(K)} u_n^e \, = U$.
Then there is a sequence $\{(x_n, \al_n)\}$ satisfying $(x_n, \al_n) \in u_n^e$ for $n=1,2,\ldots$
and
$\lim_{n\to\infty} \overline{d} ((x_n, \al_n), (x,\al)) = 0$.
Hence there is an $N$ such that $\al_n > \beta$ for all $n\geq N$.
Thus $(x_n, \beta) \in u_n^e$ for $n\geq N$. Note that
$\lim_{n\to\infty} \overline{d} ((x_n, \beta), (x,\beta)) = 0$.
Then $(x,\beta) \in \lim_{n\to \infty}^{(K)} u_n^e = U$.
This means that
$x\in \langle U\rangle_\beta$.
So (\romannumeral1) is true.

Clearly $\langle U \rangle_0 \subseteq X$.
From $\lim_{n\to \infty}^{(K)} u_n^e \, = U$
and $\langle u_n^e \rangle_0 = X$,
we have that
$\langle U \rangle_0 \supseteq X$.
Thus
$\langle U \rangle_0 = X$.
So (\romannumeral2) is true.

By Proposition \ref{repc}, (\romannumeral1) and (\romannumeral2) imply that
 $U \in F_{USC}(X)^e$.

\end{proof}

\begin{re}
  {\rm
Let $a\in [0,1]$.
From Proposition \ref{sem}, we  can deduce that
$F^{'a}_{USC}(X)$ is a closed subset of $(F_{USC}(X), H_{\rm end})$.
Then by Proposition \ref{usce}, we have that $F^{'a}_{USC}(X)^e$ is a closed subset of $(C(X \times [0,1]), H)$.
}
\end{re}

We use $(\widetilde{X}, \widetilde{d})$ to denote the completion of $(X, d)$.
We see $(X, d)$ as a subspace of $(\widetilde{X}, \widetilde{d})$.

If there is no confusion,
 we also
use $H$ to denote the Hausdorff metric
on
 $C(\widetilde{X})$ induced by $\widetilde{d}$.
  We also
use $H$ to denote the Hausdorff metric
 on $C(\widetilde{X}\times [0,1])$ induced by $\overline{\widetilde{d}}$.
 We
 also use $H_{\rm end}$ to denote the endograph metric on $F_{USC}(\widetilde{X})$
given by using $H$ on
$C(\widetilde{X} \times [0,1])$.

We see
$(F_{USCG} (X), H_{\rm end})$ as a metric subspace of $(F_{USCG} (\widetilde{X}), H_{\rm end})$. So a subset $U$
of $F_{USCG} (X)$ can be seen as a subset of $F_{USCG}(\widetilde{X})$.

Suppose that
$U$ is a subset of $F_{USC} (X)$ and $\al\in [0,1]$.
For
writing convenience,
we denote
\begin{itemize}
  \item  $U(\al):= \bigcup_{u\in U} [u]_\al$, and

\item  $U_\al : =  \{[u]_\al: u \in U\}$.
\end{itemize}

\begin{lm}
   \label{tbfegnum}
  Let $U$ be a subset of $F_{USCG} (X)$. If $U$ is totally bounded in $(F_{USCG} (X), H_{\rm end})$,
then
$U(\al)$
is totally bounded in $(X,d)$ for each $\al \in (0,1]$.
\end{lm}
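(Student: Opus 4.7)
The plan is to leverage the standard relationship between endograph closeness and closeness of $\alpha$-cuts: if $H_{\rm end}(u,v) < \delta$ and $\alpha > \delta$, then every $x \in [u]_\alpha$ is within $\delta$ of a point of $[v]_{\alpha - \delta}$. Once this is established, total boundedness in $(F_{USCG}(X), H_{\rm end})$ translates to total boundedness of the union of $\alpha$-cuts in $(X,d)$, because in $F_{USCG}(X)$ the positive $\alpha$-cuts are compact.

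Concretely, fix $\alpha \in (0,1]$ and $\varepsilon > 0$. Choose $\delta > 0$ with $\delta < \alpha$ and $2\delta \leq \varepsilon$. By the hypothesis of total boundedness, there is a finite $\delta$-net $\{u_1, \ldots, u_k\} \subseteq U$ for $U$ in $(F_{USCG}(X), H_{\rm end})$. I will first verify the auxiliary claim: for any $u, v \in F_{USC}(X)$ with $H_{\rm end}(u,v) < \delta$, we have $[u]_\alpha \subseteq N_\delta([v]_{\alpha - \delta})$, where $N_\delta$ denotes the open $\delta$-neighborhood in $(X,d)$. This follows by taking $x \in [u]_\alpha$, noting $(x, \alpha) \in {\rm end}\, u$, finding $(y,\beta) \in {\rm end}\, v$ with $\overline{d}((x,\alpha),(y,\beta)) < \delta$, and observing $v(y) \geq \beta > \alpha - \delta$, hence $y \in [v]_{\alpha - \delta}$ with $d(x,y) < \delta$.

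Next, since each $u_i \in F_{USCG}(X)$ and $\alpha - \delta > 0$, each $[u_i]_{\alpha - \delta}$ lies in $K(X) \cup \{\emptyset\}$, so the finite union $K := \bigcup_{i=1}^k [u_i]_{\alpha - \delta}$ is compact in $(X,d)$, and in particular totally bounded. Take a finite $\delta$-net $\{x_1, \ldots, x_m\}$ for $K$. For arbitrary $u \in U$, pick $u_i$ with $H_{\rm end}(u, u_i) < \delta$; then by the claim, $[u]_\alpha \subseteq N_\delta([u_i]_{\alpha - \delta}) \subseteq N_\delta(K) \subseteq N_{2\delta}(\{x_1, \ldots, x_m\}) \subseteq N_\varepsilon(\{x_1, \ldots, x_m\})$. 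Since this holds for every $u \in U$, the set $U(\alpha) = \bigcup_{u \in U}[u]_\alpha$ admits the finite $\varepsilon$-net $\{x_1, \ldots, x_m\}$ in $(X,d)$, and hence is totally bounded.

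The main (and only) subtle point is the auxiliary level estimate in the second paragraph; everything else is bookkeeping. I would want to make sure the strict inequality $\beta > \alpha - \delta$ actually delivers $y \in [v]_{\alpha-\delta}$ (it does, since $v(y) \geq \beta \geq \alpha - \delta$), and that the choice $\delta < \alpha$ is made explicitly so that the cut $[v]_{\alpha - \delta}$ of the net element $v = u_i$ is a genuine positive $\alpha$-cut, guaranteeing compactness via membership in $F_{USCG}(X)$. No global compactness or connectedness of $X$ is needed, so the lemma holds for an arbitrary metric space $(X,d)$.
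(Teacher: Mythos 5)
Your proof is correct. The auxiliary level estimate is sound: if $H_{\rm end}(u,v)<\delta$ and $x\in[u]_\al$, then some $(y,\beta)\in{\rm end}\,v$ has $d(x,y)+|\al-\beta|<\delta$, so $v(y)\geq\beta>\al-\delta$ and $y\in[v]_{\al-\delta}$ with $d(x,y)<\delta$; this is exactly the inequality $H^*([u]_\al,[v]_{\al-\delta})<\delta$ that the paper also relies on. Where you diverge is in how total boundedness is exploited: the paper uses the sequential characterization (every sequence in $U(\al)$ lifts to a sequence in $U$, which has a Cauchy subsequence $\{u_{n_l}\}$; the level estimate then shows $\bigcup_l[u_{n_l}]_\al$ is totally bounded, yielding a Cauchy subsequence of points), whereas you work directly with finite $\delta$-nets, pushing a net $\{u_1,\dots,u_k\}\subseteq U$ down to a finite $\varepsilon$-net for $U(\al)$ via the compact set $K=\bigcup_i[u_i]_{\al-\delta}$. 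Your version is somewhat more self-contained, since it avoids invoking the equivalence of total boundedness with the Cauchy-subsequence property, and it makes explicit where the compactness of the positive cuts of $F_{USCG}(X)$-elements enters (namely, only for the finitely many net elements $u_i$, for which you correctly ensure $\al-\delta>0$). Two points you should state explicitly for completeness: the finite net can indeed be chosen inside $U$ (standard, by halving the radius), which is what guarantees each $u_i\in F_{USCG}(X)$; and the degenerate cases where $U=\emptyset$ or where every $[u_i]_{\al-\delta}=\emptyset$ (forcing $U(\al)=\emptyset$) are trivially totally bounded. Neither affects the validity of the argument.
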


\begin{proof} The proof is similar to that of the necessity part of Theorem 7.8 in \cite{huang719}.

Let $\al \in (0,1]$.
To show
that
$U(\al)$
is totally bounded in $X$, we only need to show that
each sequence in $U(\al)$
has a Cauchy subsequence.

Let $\{x_n\}$ be a sequence in $U(\al)$. Then there is a sequence $\{u_n\}$ in $U$ with $x_n \in  [u_n]_\al$ for $n=1,2,\ldots$.
Since $U$ is totally bounded in $(F_{USCG} (X), H_{\rm end})$,
$\{u_n\}$ has
 a Cauchy subsequence $\{u_{n_l}\}$ in $(F_{USCG} (X), H_{\rm end})$. So given $\varepsilon\in (0, \al)$, there is a $K( \varepsilon) \in \mathbb{N}$
such that
\begin{equation*}\label{ends}
H_{\rm end} (u_{n_l},  u_{n_K})  <   \varepsilon
\end{equation*}
for all $l \geq K$.
Thus
\begin{equation}\label{cutsgm}
H^*([u_{n_l}]_\al,       [u_{n_K}]_{\al-\varepsilon}) <  \varepsilon
\end{equation}
for all $l \geq K$.
From \eqref{cutsgm} and the arbitrariness of $\varepsilon$,  $\bigcup_{l=1} ^{+\infty} [u_{n_l}]_\al $ is totally bounded in $(X,d)$.
Thus
$\{x_{n_l}\}$, which is a subsequence of $\{x_n\}$,
has a Cauchy subsequence, and so does $\{x_n\}$.

\end{proof}

\begin{re}{\rm
 It is easy to see that for a totally bounded set $U$ in $(F_{USCG} (X), H_{\rm end})$ and $\al\in (0,1]$,
$U(\al) = \emptyset$ is possible even if $U \not= \emptyset$.
}
\end{re}

For $D \subseteq  X \times [0,1]$ and $\al\in [0,1]$,
define
 $\bm{\langle D \rangle_\al} := \{ x:   (x,\al) \in D\}$.

\begin{tm}
 \label{rcfegnum}
  Let $U$ be a subset of $F_{USCG} (X)$. Then $U$ is relatively compact in $(F_{USCG} (X), H_{\rm end})$
if and only if
$U(\al)$
is relatively compact in $(X, d)$ for each $\al \in (0,1]$.

\end{tm}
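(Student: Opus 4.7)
My plan is to treat the two directions separately, with the sufficiency requiring the bulk of the work. For the necessity, suppose $U$ is relatively compact in $(F_{USCG}(X), H_{\rm end})$ and pick a sequence $\{x_n\}$ in $U(\al)$ with $x_n \in [u_n]_\al$ for some $u_n \in U$. By relative compactness, a subsequence $\{u_{n_k}\}$ converges in $H_{\rm end}$ to some $u \in F_{USCG}(X)$, and since $u \in F_{USCG}(X)$, $[u]_{\al/2} \in K(X) \cup \{\emptyset\}$. Choosing $\varepsilon_j \downarrow 0$ with $\varepsilon_j < \al/2$, pick $K_j$ with $H_{\rm end}(u_{n_k}, u) < \varepsilon_j$ for $k \geq K_j$, and let $j(k) \to \infty$ with $K_{j(k)} \leq k$. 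This yields $(y_k, \beta_k) \in {\rm end}\, u$ with $\beta_k > \al - \varepsilon_{j(k)} > \al/2$, so $y_k \in [u]_{\al/2}$ and $d(x_{n_k}, y_k) < \varepsilon_{j(k)} \to 0$; in particular $[u]_{\al/2}$ is nonempty and hence compact. A sub-subsequence $y_{k_l} \to y \in [u]_{\al/2} \subseteq X$ then forces $x_{n_{k_l}} \to y$ in $(X,d)$, proving $U(\al)$ is relatively compact.

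For the sufficiency, fix a countable dense set $Q \subseteq (0,1)$ and let $\{u_n\} \subseteq U$ be arbitrary. For each $\al \in Q$ the family $\{[u_n]_\al : n \in \mathbb{N}\} \subseteq K(X) \cup \{\emptyset\}$ has union contained in the relatively compact $U(\al)$, so by Theorem \ref{rce} (after separating empty from non-empty cuts) it is relatively compact in the Hausdorff metric. A standard diagonal extraction produces a subsequence $\{u_{n_k}\}$ and sets $A_\al \in K(X) \cup \{\emptyset\}$ such that $[u_{n_k}]_\al \to A_\al$ in $H$ for every $\al \in Q$; applying Theorem \ref{hkg} to the inclusions $[u_{n_k}]_\al \supseteq [u_{n_k}]_\beta$ (for $\al \leq \beta$ in $Q$) yields $A_\al \supseteq A_\beta$. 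I would then set $v_\al := \bigcap_{\beta \in Q,\, \beta < \al} A_\beta$ for $\al \in (0,1]$; density of $Q$ verifies $v_\al \in K(X) \cup \{\emptyset\}$ and $v_\al = \bigcap_{\gamma < \al} v_\gamma$, so Proposition \ref{fus} provides a unique $u \in F_{USCG}(X)$ with $[u]_\al = v_\al$. Note that $A_\beta \subseteq v_\beta = [u]_\beta$ for every $\beta \in Q$ by monotonicity.

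The crucial and hardest step is showing directly that $H_{\rm end}(u_{n_k}, u) \to 0$, since Theorem \ref{aec} requires normality of $u$ and a Euclidean ambient space, precluding any level-set shortcut. The idea is to bound the two Hausdorff half-distances level by level. Given $\varepsilon > 0$, fix a finite $Q' \subseteq Q$ such that for every $\al \in [\varepsilon, 1]$ there exists $\al' \in Q'$ with $\al - \varepsilon < \al' < \al$, and choose $K$ with $H([u_{n_k}]_{\al'}, A_{\al'}) < \varepsilon$ uniformly for $\al' \in Q'$ and $k \geq K$. For $(x,\al) \in {\rm end}\, u_{n_k}$ with $\al < \varepsilon$, pair with $(x,0) \in {\rm end}\, u$; for $\al \geq \varepsilon$, choose $\al'$ as above so $x \in [u_{n_k}]_{\al'}$, pick $y \in A_{\al'} \subseteq [u]_{\al'}$ with $d(x,y) < \varepsilon$, and observe $\overline{d}((x,\al),(y,\al')) < 2\varepsilon$. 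The reverse estimate for $(y,\beta) \in {\rm end}\, u$ is symmetric, using that $y \in [u]_\beta \subseteq A_{\al'}$ for the chosen $\al' \in Q'$. This gives $H_{\rm end}(u_{n_k}, u) \leq 2\varepsilon$ for $k \geq K$, proving $u_{n_k} \to u$ in $H_{\rm end}$ and hence the relative compactness of $U$.
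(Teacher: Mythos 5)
Your proof is correct, but it takes a genuinely different route from the paper in both directions. For necessity, the paper first invokes Lemma \ref{tbfegnum} to get total boundedness of $U(\al)$, passes to the completion $(\widetilde{X},\widetilde{d})$, and rules out limit points in $\widetilde{X}\setminus X$ by a contradiction argument via Kuratowski convergence of endographs; you instead argue directly in $X$, extracting points $(y_k,\beta_k)\in{\rm end}\,u$ with $\beta_k>\al/2$ and using compactness of $[u]_{\al/2}$ for the $H_{\rm end}$-limit $u\in F_{USCG}(X)$ to produce a convergent subsequence of $\{x_{n_k}\}$ — this avoids the completion entirely and is arguably cleaner. For sufficiency, the paper diagonalizes over the truncated endographs ${\rm end}_{\al_k}u_n$, which are compact in $X\times[\al_k,1]$, applies Theorem \ref{rce} in the hyperspaces $K(X\times[\al_k,1])$, assembles the limit endograph as $v=\cup_k v^k\cup(X\times\{0\})$, and identifies it as an element of $F_{USCG}(X)^e$ via the closedness result Proposition \ref{usce}; you diagonalize over $\al$-cuts at a countable dense set of levels $Q$, reconstruct the limit fuzzy set through the representation theorem (Proposition \ref{fus}) from $v_\al=\bigcap_{\beta\in Q,\beta<\al}A_\beta$, and then prove $H_{\rm end}(u_{n_k},u)\to 0$ by hand with a finite $\varepsilon$-net of levels and the monotonicity $A_\beta\subseteq[u]_\beta\subseteq A_{\al'}$ for $\al'<\beta$ in $Q$. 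Your levelwise route buys independence from the endograph-space machinery (Propositions \ref{repc} and \ref{usce}) and gives explicit control of the limit's level sets, at the cost of the direct $2\varepsilon$ estimate and some bookkeeping about empty cuts — which you correctly flag and which does need the convention that $[u_{n_k}]_{\al'}\to\emptyset$ means the cuts are eventually empty, so that a nonempty $[u_{n_k}]_{\al'}$ for large $k$ forces $A_{\al'}\in K(X)$. Both arguments handle the degenerate limit $\emptyset_{F(X)}$ (the paper by a separate case $\liminf S_{u_n}=0$, yours uniformly), and I see no gap in yours.
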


\begin{proof}

\textbf{\emph{Necessity}}. Suppose that $U$ is relatively compact in $(F_{USCG} (X), H_{\rm end})$.
Let $\al\in (0,1]$.
Then by Lemma \ref{tbfegnum},
 $U(\al)$ is totally bounded.
Hence
$U(\al)$ is relatively compact in $(\widetilde{X}, \widetilde{d})$.

To show $U(\al)$ is relatively compact in $(X,d)$, we proceed by  contradiction.
If this were not the case, then
there exists a sequence $\{x_n\}$ in $U(\al)$
such that $\{x_n\}$ converges to $x\in \widetilde{X} \setminus X$ in $(\widetilde{X}, \widetilde{d})$.

Assume that $x_n \in [u_n]_\al$ and $u_n \in U$, $n=1,2,\ldots$.
From the relative compactness of $U$,
there is a subsequence $\{u_{n_k}\}$ of $\{u_n\}$
such that
$\{u_{n_k}\}$
converges to $u \in F_{USCG}(X)$.
Since
$F_{USCG}(X)$ can be seen as a subspace of $F_{USCG}(\widetilde{X})$,
we obtain that $\{u_{n_k}\}$ converges to $u$ in $(F_{USCG}(\widetilde{X}), H_{\rm end})$.
By Theorem \ref{hkg},
$\lim_{k\to \infty}^{(K)} u_{n_k}^e \, = u^e$ according to $(\widetilde{X}\times[0,1], \overline{\widetilde{d}})$.
Notice
that
$(x_{n_k}, \al) \in u_{n_k}^e $ for $k=1,2,\ldots$, and
$\{(x_{n_k}, \al)\}$ converges to $(x, \al)$ in $(\widetilde{X}\times[0,1], \overline{\widetilde{d}})$.
Thus
$(x,\al) \in u^e$, which contradicts $x\in \widetilde{X} \setminus X$.

It can be seen that
the necessity part of Theorem 7.10 in \cite{huang719} can be verified in a similar manner to that in the necessity
part of this theorem.

\textbf{\emph{Sufficiency}}. \
Suppose that
$U(\al)$
is relatively compact in $(X, d)$ for each $\al \in (0,1]$.
To show that $U$ is relatively compact in $(F_{USCG} (X), H_{\rm end})$,
we only need to show
that each sequence in $U$ has a convergent subsequence in $(F_{USCG} (X), H_{\rm end})$.

Let $\{u_n\}$ be a sequence in $U$.
If
$\liminf_{n\to\infty} S_{u_n} = 0$, i.e. there is a subsequence $\{u_{n_k}\}$ of $\{u_n\}$
such that
$\lim_{k\to\infty} S_{u_{n_k}} = 0$. Then clearly $H_{\rm end} (u_{n_k}, \emptyset_{F(X)}) = S_{u_{n_k}} \to 0$
as $k \to \infty$.
Since $\emptyset_{F(X)} \in F_{USCG} (X)$, $\{u_{n_k}\}$ is a convergent subsequence in $(F_{USCG} (X), H_{\rm end})$.

If
$\liminf_{n\to\infty} S_{u_n} >0$, then there is a $\xi>0$ and an $N \in \mathbb{N}$
such that $[u_n]_{\xi} \not= \emptyset$
for all $n\geq N$.

First we claim the following property:
\begin{description}
  \item[(a)]
Let $\al\in (0,1]$ and $S$ be a subset of $U$ with $[u]_\al \not= \emptyset$ for each $u\in S$. Then
$\{ {\rm end}_\al u: u\in S \}$ is a relatively compact set in $(K(X\times [\al,1]), H)$.
\end{description}

It can be seen that for each $u\in S$,
${\rm end}_\al\, u \in K(X\times [\al,1])$.

As $U(\al)$ is relatively compact in $(X,d)$,
$U(\al) \times [\al, 1]$ is relatively compact in $(X\times [\al,1], \overline{d})$.
Since
$\bigcup_{u\in S} {\rm end}_\al u $ is
 a subset of $ U(\al) \times [\al, 1]$,
then
$\bigcup_{u\in S} {\rm end}_\al u $
is also a
relatively compact set in $(X\times [\al,1], \overline{d})$.
Thus
by Theorem \ref{rce}, $\{ {\rm end}_\al u: u\in S \}$ is relatively compact in $(K(X\times [\al,1]), H)$.
So affirmation (a) is true.

 Take a sequence $ \{\alpha_k, \ k=1,2,\ldots\}$ which satisfies that $0<\al_{k+1} <  \al_k \leq \min\{\xi, \frac{1}{k}\}$ for $k=1,2,\ldots$. We can see that $\alpha_k \to 0$ as $k\to\infty$.

By affirmation (a),
$\{ {\rm end}_{\al_1}\, u_n: n=N,N+1,\ldots\}$ is relatively compact in $(K(X\times [\al_1,1]), H)$.
So there is a subsequence $\{u_{n}^{(1)}\}$ of $\{u_n: n\geq N\}$ and $v^1 \in K(X\times [\al_1,1])$ such that $H({\rm end}_{\al_1}\, u_{n}^{(1)},  v^1 ) \to 0$.
Clearly, $\{u_{n}^{(1)}\}$ is also a subsequence of $\{u_n\}$.

Again using affirmation (a),
$\{ {\rm end}_{\al_2}\, u_n^{(1)} \}$ is relatively compact in $(K(X\times [\al_2,1]), H)$.
So there is a subsequence $\{u_{n}^{(2)}\}$ of $\{u_n^{(1)}\}$ and $v^2 \in K(X\times [\al_2,1])$ such that $H({\rm end}_{\al_2}\, u_{n}^{(2)},  v^2 ) \to 0$.

Repeating the above procedure, we can obtain $\{u_{n}^{(k)}\}$
and
$v^{k}\in K(X\times [\al_k, 1])$, $k=1,2,\ldots$,
such that
for each $k=1,2,\ldots$, $\{u_{n}^{(k+1)}\}$ is a subsequence of $\{u_{n}^{(k)}\}$
and
$H({\rm end}_{\al_k}\, u_{n}^{(k)},  v^k ) \to 0$.

We claim that
\begin{description}
\item[(b)] Let $k_1$ and $k_2$ be in $\mathbb{N}$
with $k_1 < k_2$. Then
\\
(\romannumeral1) \ $
\langle v^{k_1} \rangle_{\al_{k_1}} \subseteq \langle v^{k_2} \rangle_{\al_{k_1}}$,
\\
(\romannumeral2)
 $\langle v^{k_1} \rangle_\al = \langle v^{k_2} \rangle_\al$ when $\al\in (\al_{k_1}, 1]$,
 \\
(\romannumeral3) $v^k \subseteq v^{k+1}$ for $k=1,2,\ldots$.
 \end{description}

Note that $\{ u_n^{(k_2)}\}$ is a subsequence of $\{ u_n^{(k_1)}\}$
and that
$\al_{k_2} < \al_{k_1}$. Thus
by Theorem \ref{hkg}, for each $\al\in [\al_{k_1}, 1]$,
 \begin{align} \label{cner}
    &             \langle v^{k_1} \rangle_{\al}  \times \{\al\}   \nonumber \\
&=  \liminf_{n\to \infty} {\rm end}_{\al_{k_1}}\, u_n^{(k_1)} \cap (X \times \{\al\}) \nonumber\\
&
\subseteq \liminf_{n\to \infty} {\rm end}_{\al_{k_2}}\, u_n^{(k_2)} \cap (X \times \{\al\}) \nonumber\\
&
=\langle v^{k_2} \rangle_{\al}  \times \{\al\}.
                       \end{align}
So (\romannumeral1) is true.

Let $\al\in [0,1]$ with $\al > \al_{k_1}$. Observe that
if a sequence
$\{(x_m, \beta_m)\}$ converges to
a point $(x,\al)$ as $m\to\infty$ in $(X\times [0,1], \overline{d})$,
then
there is an $M$ such that for all $m>M$, $\beta_m >  \al_{k_1}$, i.e. $(x_m, \beta_m) \in X\times (\al_{k_1},1]$.
Thus by Theorem \ref{hkg}, for each $\al\in (\al_{k_1}, 1]$,
 \begin{align}  \label{cme}
    &             \langle v^{k_1} \rangle_\al  \times \{\al\} \nonumber \\
&=  \limsup_{n\to \infty} {\rm end}_{\al_{k_1}}\, u_n^{(k_1)}  \cap (X \times \{\al\})\nonumber \\
&
\supseteq \limsup_{n\to \infty} {\rm end}_{\al_{k_2}}\, u_n^{(k_2)}  \cap (X \times \{\al\}) \nonumber\\
&
=\langle v^{k_2} \rangle_\al  \times \{\al\}.
                       \end{align}

Hence by \eqref{cner} and \eqref{cme},
$ \langle v^{k_1} \rangle_\al =  \langle v^{k_2} \rangle_\al $ for $\al\in (\al_{k_1}, 1]$. So (\romannumeral2) is true.
(\romannumeral3)
follows immediately from (\romannumeral1) and (\romannumeral2).

Define a subset $v$ of $X \times [0,1]$
given by
\begin{equation}\label{fve}
v = \cup_{k=1}^{+\infty} v^k \cup ( X\times \{0\}).
\end{equation}
From affirmation (b), we can see that
\begin{equation}\label{vce}
   \langle v\rangle_\al =
\left\{
  \begin{array}{ll}
 \langle v^k \rangle_\al,  & \mbox{ if } \mbox{for some } k\in \mathbb{N}, \  \al > \al_k ,\\
 X, &  \mbox{ if } \al=0,
  \end{array}
\right.
\end{equation}
and hence
\begin{gather}\label{vceg}
 v \cap (X\times (\al_k, 1]) = v^{k} \cap (X\times (\al_k, 1]) \subseteq v^k.
   \end{gather}

We show that
$v\in C(X\times [0,1])$. To this end,
let $\{(x_l, \gamma_l)\}$ be a sequence in $v$
which converges to an element $(x,\gamma)$
in $X\times [0,1]$. If $\gamma=0$, then clearly $(x,\gamma) \in v$. If $\gamma>0$,
then there is a $k_0\in \mathbb{N}$ such that
$\gamma > \al_{k_0}$. Hence there is an $L$ such that $\gamma_l > \al_{k_0}$ when $l\geq L$.
So by \eqref{vceg},
 $(x_l, \gamma_l) \in v^{k_0}$ when $l\geq L$.
Since
$v^{k_0} \in K(X\times [\al_{k_0}, 1])$, it follows that $(x,\gamma) \in v^{k_0}\subset v$.

We claim that
\begin{description}
  \item[(c)] $\lim_{n\to\infty} H({\rm end}\, u_n^{(n)},  v) =0$ and $v\in F_{USCG} (X)^e$.
\end{description}

Let $n\in \mathbb{N}$ and $k\in \mathbb{N}$.
Then
by \eqref{fve},
\begin{align}\label{hle}
  H^* ({\rm end}\, u_n^{(n)},  v) &= \max\{  H^* ({\rm end}_{\al_k} \, u_n^{(n)},  v) ,\
 H^* ({\rm end}_0^{\al_k} \, u_n^{(n)},  v)    \} \nonumber \\
&
\leq \max\{H^*({\rm end}_{\al_k}\, u_n^{(n)},\,  v^{k}),\ \al_k\},
\end{align}
and by \eqref{vceg},
\begin{align}\label{hre}
H^* ( v, {\rm end}\, u_n^{(n)}) &  = \max\{
\sup_{(x, \gamma) \in  v \cap (X\times (\al_k, 1]} \overline{d} ( (x, \gamma),  \, {\rm end}\, u_n^{(n)} ),
\ H^*( v \cap (X\times [0, \al_k]),\, {\rm end}\, u_n^{(n)})\} \nonumber
\\
& \leq \max\{H^* ( v^{k},\, {\rm end}_{\al_{k}}\, u_n^{(n)}),    \ \al_k\}.
\end{align}
Clearly,
\eqref{hle} and \eqref{hre} imply that
\begin{align}\label{hlre}
  H ({\rm end}\, u_n^{(n)}, \, v) & \leq     \max\{  H ({\rm end}_{\al_k} \, u_n^{(n)}, \,  v^{k}),\ \al_k\}.
\end{align}

Now we show that
\begin{equation}\label{lgce}
\lim_{n\to\infty} H ({\rm end}\, u_n^{(n)},  v) =0.
\end{equation}
To see this,
let $\varepsilon>0$.
Notice that $\al_k \to 0$ and for each $\al_k$, $k=1,2,\ldots$,
$\lim_{n\to\infty} H({\rm end}_{\al_k}\, u_n^{(n)},   v^{k}) = 0$.
Then there is an $\al_{k_0}$ and an $N\in \mathbb{N}$ such that
 $\al_{k_0} < \varepsilon$
and
$ H({\rm end}_{\al_{k_0}}\, u_n^{(n)},   v^{k_0}) <\varepsilon$ for all $n\geq N$.
Thus by \eqref{hlre},
$H ({\rm end}\, u_n^{(n)},  v)<\varepsilon$ for all $n\geq N$. So \eqref{lgce} is true.

Since the sequence $\{ {\rm end}\, u_n^{(n)} \}$ is in $F_{USC}(X)^e$
and
$\{ {\rm end}\, u_n^{(n)} \}$ converges to $v$ in $(C(X \times [0,1]), H)$,
by
Proposition \ref{usce}, it follows that $v\in F_{USC}(X)^e$.

Let
 $k\in \mathbb{N}$.
Then
$v^{k}\in K(X\times [\al_k, 1])$,
and hence $\langle v^k \rangle_\al \in K(X) \cup \{\emptyset\}$ for all $\al\in [0,1]$.
So
from \eqref{vce}, $\langle v\rangle_\al \in K(X) \cup \{\emptyset\}$ for all $\al\in (0,1]$, and
thus $v\in F_{USCG}(X)^e$.

From
 affirmation (c), we have
that
 $\{u_n^{(n)}\}$ is a convergent sequence in $(F_{USCG} (X), H_{\rm end})$.
Note that $\{u_n^{(n)}\}$ is a subsequence of $\{u_n\}$.
Thus the proof is completed.

\end{proof}

\begin{tm} \label{tbfegnu}
  Let $U$ be a subset of $F_{USCG} (X)$. Then $U$ is totally bounded in $(F_{USCG} (X), H_{\rm end})$
if and only if
$U(\al)$
is totally bounded in $(X,d)$ for each $\al \in (0,1]$.
\end{tm}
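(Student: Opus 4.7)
The necessity direction is already supplied by Lemma \ref{tbfegnum}, so the work is all in the sufficiency direction, and my plan is to piggyback on the just-proved Theorem \ref{rcfegnum} by passing to the completion $(\widetilde{X},\widetilde{d})$.

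Assume $U(\al)$ is totally bounded in $(X,d)$ for every $\al\in(0,1]$. Since total boundedness is an intrinsic property of the induced metric and $X$ sits isometrically inside $\widetilde{X}$, each $U(\al)$ is also totally bounded in $(\widetilde{X},\widetilde{d})$. Because $(\widetilde{X},\widetilde{d})$ is complete, every totally bounded subset is relatively compact, so $U(\al)$ is relatively compact in $(\widetilde{X},\widetilde{d})$ for each $\al\in(0,1]$.

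Next, I view $U$ as a subset of $F_{USCG}(\widetilde{X})$ via the inclusion of metric subspaces set up earlier in the paper. The key check is that the $\al$-cuts do not change: for any $u\in F_{USCG}(X)$ and any $\al\in(0,1]$, the $\al$-cut $[u]_\al$ is compact in $X$, hence compact in $\widetilde{X}$, so the positive $\al$-cuts agree whether we regard $u$ as an element of $F_{USCG}(X)$ or of $F_{USCG}(\widetilde{X})$. Consequently $U(\al)$ computed in $F_{USCG}(\widetilde{X})$ is the same set as $U(\al)$ computed in $F_{USCG}(X)$, and I just showed this set is relatively compact in $\widetilde{X}$. Applying Theorem \ref{rcfegnum} (with the underlying space being $\widetilde{X}$), $U$ is relatively compact in $(F_{USCG}(\widetilde{X}),H_{\rm end})$. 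Relatively compact subsets of a metric space are totally bounded, so $U$ is totally bounded in $(F_{USCG}(\widetilde{X}),H_{\rm end})$.

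Finally, invoking once more that total boundedness is intrinsic (it depends only on the restricted metric on $U$), and noting that $(F_{USCG}(X),H_{\rm end})$ is a metric subspace of $(F_{USCG}(\widetilde{X}),H_{\rm end})$ carrying the same endograph metric on $U$, total boundedness of $U$ in the larger space transfers to total boundedness of $U$ in $(F_{USCG}(X),H_{\rm end})$, completing the proof.

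The only subtle point I foresee is the verification in the second paragraph that the Hausdorff endograph distance on $U\times U$ is unaffected by the ambient space switch $X\leadsto\widetilde{X}$; this is because $\overline{d}$ and $\overline{\widetilde{d}}$ agree on $X\times[0,1]$ and the relevant endographs are already closed in $X\times[0,1]$ with the positive-level parts unchanged. Once this is recorded, the proof is essentially a one-line reduction to Theorem \ref{rcfegnum} plus the standard ``relatively compact $\Rightarrow$ totally bounded'' implication, with no further technical obstacle.
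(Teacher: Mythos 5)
Your proposal is correct and follows essentially the same route as the paper: reduce sufficiency to Theorem \ref{rcfegnum} by passing to the completion $(\widetilde{X},\widetilde{d})$, where total boundedness of each $U(\al)$ upgrades to relative compactness, and then transfer total boundedness of $U$ back down since it is intrinsic to the restricted metric. The extra checks you record (invariance of the positive $\al$-cuts and of $H_{\rm end}$ under the ambient-space switch) are exactly what the paper leaves implicit when it treats $(F_{USCG}(X),H_{\rm end})$ as a metric subspace of $(F_{USCG}(\widetilde{X}),H_{\rm end})$.
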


\begin{proof}
 \textbf{\emph{Necessity}}.
The necessity part is Lemma \ref{tbfegnum}.

\textbf{\emph{Sufficiency}}.
Suppose that
$U(\al)$ is totally bounded
in $(X, d)$ for each $\al \in (0,1]$.
Then
$U(\al)$ is relatively compact
in $(\widetilde{X}, \widetilde{d})$ for each $\al \in (0,1]$.
Thus by Theorem \ref{rcfegnum},
$U$ is
 relatively compact in
 $(F_{USCG} (\widetilde{X}), H_{\rm end})$.
Hence
$U$ is
 totally bounded in
 $(F_{USCG} (\widetilde{X}), H_{\rm end})$.
So clearly
$U$
is
 totally bounded in
 $(F_{USCG} (X), H_{\rm end})$.

\end{proof}

\begin{tm}\label{cfegum}
  Let $U$ be a subset of $F_{USCG} (X)$. Then the following are equivalent:
\begin{enumerate}
\renewcommand{\labelenumi}{(\roman{enumi})}

\item
 $U$ is compact in $(F_{USCG} (X), H_{\rm end})$;

\item  $U(\al)$
is relatively compact in $(X, d)$ for each $\al \in (0,1]$ and $U$ is closed in $(F_{USCG} (X), H_{\rm end})$;

\item  $U(\al)$
is compact in $(X, d)$ for each $\al \in (0,1]$ and $U$ is closed in $(F_{USCG} (X), H_{\rm end})$.
\end{enumerate}

\end{tm}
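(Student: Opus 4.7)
The plan is to deduce this from Theorem \ref{rcfegnum} combined with a standard ``relatively compact $+$ closed $=$ compact'' argument, together with a short verification that $U(\al)$ is closed whenever $U$ is compact. I would organize the proof around the cycle (\romannumeral1)$\Rightarrow$(\romannumeral3)$\Rightarrow$(\romannumeral2)$\Rightarrow$(\romannumeral1).

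For (\romannumeral2)$\Rightarrow$(\romannumeral1), I would argue as follows. Since each $U(\al)$ is relatively compact in $(X,d)$ for $\al\in(0,1]$, Theorem \ref{rcfegnum} yields that $U$ is relatively compact in $(F_{USCG}(X),H_{\rm end})$. A relatively compact subset of a metric space that is moreover closed in that space is compact, so (\romannumeral1) follows. For (\romannumeral3)$\Rightarrow$(\romannumeral2), note that compactness in $(X,d)$ trivially implies relative compactness in $(X,d)$, so the implication is immediate.

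The one implication that needs a small argument is (\romannumeral1)$\Rightarrow$(\romannumeral3). Compactness of $U$ gives closedness of $U$ in $(F_{USCG}(X),H_{\rm end})$, and via Theorem \ref{rcfegnum} also gives that $U(\al)$ is relatively compact in $(X,d)$ for every $\al\in(0,1]$. So it remains to upgrade ``relatively compact'' to ``compact'', i.e.\ to show that $U(\al)$ is closed in $(X,d)$. Let $x\in\overline{U(\al)}$ and pick a sequence $x_n\to x$ with $x_n\in[u_n]_\al$, $u_n\in U$. By compactness of $U$, a subsequence $\{u_{n_k}\}$ converges to some $u\in U$ in $H_{\rm end}$. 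Then by Theorem \ref{hkg} we have $\lim^{(K)}_{k\to\infty}{\rm end}\,u_{n_k}={\rm end}\,u$, and since $(x_{n_k},\al)\in{\rm end}\,u_{n_k}$ and $(x_{n_k},\al)\to(x,\al)$ in $(X\times[0,1],\overline{d})$, we conclude $(x,\al)\in{\rm end}\,u$, i.e.\ $x\in[u]_\al\subseteq U(\al)$. Thus $U(\al)$ is closed in $(X,d)$, and being relatively compact it is compact, which establishes (\romannumeral3).

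There is no real obstacle here; the only point requiring care is the closedness argument for $U(\al)$, where one must correctly invoke the passage from $H_{\rm end}$-convergence to Kuratowski convergence of endographs via Theorem \ref{hkg}. Everything else is a direct application of Theorem \ref{rcfegnum} and the elementary fact that in a metric space a set is compact if and only if it is closed and relatively compact.
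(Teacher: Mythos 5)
Your proposal is correct and follows essentially the same route as the paper: the equivalence of (i) and (ii) via Theorem \ref{rcfegnum}, the trivial implication from (iii), and the key step of upgrading $U(\al)$ from relatively compact to compact by extracting an $H_{\rm end}$-convergent subsequence from the compact set $U$ and passing to Kuratowski convergence of endographs (the paper phrases this via Remark \ref{hmr} and $\Gamma$-convergence, which is the same use of Theorem \ref{hkg}). The only difference is the bookkeeping of which implications are proved, which is immaterial.
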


\begin{proof}
By Theorem \ref{rcfegnum},
  (\romannumeral1) $\Leftrightarrow$ (\romannumeral2).
  Obviously (\romannumeral3) $\Rightarrow$ (\romannumeral2).
We
shall complete the proof by showing that
 (\romannumeral2) $\Rightarrow$
 (\romannumeral3).
 Suppose that (\romannumeral2) is true.
To verify (\romannumeral3),
it suffices to
show that $U(\al)$ is closed in $(X,d)$ for each $\al\in (0,1]$.
To do this,
let $\al\in (0,1]$
and
let $\{x_n\}$ be a sequence in $U(\al)$ with $\{x_n\}$ converges to an element $x$ in $(X,d)$.
We only need to
show that $x\in U(\al)$.

Pick
a sequence $\{u_n\}$ in $U$
such that
$x_n \in [u_n]_\al$ for $n=1,2,\ldots$, which means
 that $(x_n,\al) \in {\rm end}\, u_n$
for $n=1,2,\ldots$.

From the equivalence of (\romannumeral1) and (\romannumeral2),
 $U$ is compact in $(F_{USCG} (X), H_{\rm end})$.
So
there exists a subsequence $\{u_{n_k}\}$ of $\{u_n\}$ and $u\in U$
such that $H_{\rm end}(u_{n_k}, u) \to 0$.
Hence
by Remark \ref{hmr}, $\lim_{n\to \infty}^{(\Gamma)}  u_{n_k} = u $.
Note that $(x,\al) = \lim_{k\to\infty}(x_{n_k}, \al)$.
Thus
$$(x,\al) \in \liminf_{n\to \infty}{\rm end}\, u_{n_k}
=
{\rm end}\, u.$$
So
$x\in [u]_\al$, and therefore $x\in U(\al)$.

It can be seen that
Theorem 7.11 in \cite{huang719} can be verified in a similar manner to that in this theorem.

\end{proof}

\begin{tm}\label{cmu}
$ (X,d)$ is complete if and only if $F_{USCG} (X)$ is a closed set in $(F_{USC} (X), H_{\rm end})$.
\end{tm}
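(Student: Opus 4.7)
The plan is to prove each direction separately, using standard compactness arguments together with Hausdorff-distance manipulations on endographs. For the forward direction, assume $(X,d)$ is complete and let $\{u_n\} \subseteq F_{USCG}(X)$ satisfy $H_{\rm end}(u_n, u) \to 0$ for some $u \in F_{USC}(X)$; I need to show $u \in F_{USCG}(X)$, i.e., each $[u]_\alpha$ with $\alpha \in (0,1]$ is compact. Since $u$ is upper semi-continuous, $[u]_\alpha$ is already closed in $X$; by completeness of $X$ it suffices to prove total boundedness. Given $\varepsilon \in (0,\alpha)$, pick $N$ with $H_{\rm end}(u_N, u) < \varepsilon/2$. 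For each $x \in [u]_\alpha$ the point $(x,\alpha) \in {\rm end}\, u$ admits some $(y,\beta) \in {\rm end}\, u_N$ with $\overline{d}((x,\alpha),(y,\beta)) < \varepsilon/2$, forcing $\beta > \alpha - \varepsilon/2$ (so $y \in [u_N]_{\alpha-\varepsilon/2}$) and $d(x,y) < \varepsilon/2$. Because $u_N \in F_{USCG}(X)$, the cut $[u_N]_{\alpha - \varepsilon/2}$ is compact and hence totally bounded; a finite $\varepsilon/2$-net for it serves as an $\varepsilon$-net for $[u]_\alpha$ by the triangle inequality. Thus $[u]_\alpha$ is totally bounded and closed in the complete space $X$, hence compact.

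For the converse I argue by contrapositive: if $(X,d)$ is not complete, I will exhibit a sequence in $F_{USCG}(X)$ whose $H_{\rm end}$-limit lies outside $F_{USCG}(X)$. Pick a Cauchy sequence $\{x_n\}$ in $X$ with no limit in $X$; then no subsequence of $\{x_n\}$ can converge in $X$ either, otherwise the whole Cauchy sequence would. Set $S := \{x_n : n \in \mathbb{N}\}$ and $S_n := \{x_1, \ldots, x_n\}$. I would verify that $S$ is closed in $X$ (any $y \in \overline{S} \cap X$ not already equal to some $x_n$ would be the limit of a subsequence of $\{x_n\}$, a contradiction) but $S$ is not compact, because $\{x_n\} \subseteq S$ has no convergent subsequence in $X$. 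Consequently $u := \chi_S$ lies in $F_{USC}(X) \setminus F_{USCG}(X)$, while each $u_n := \chi_{S_n}$ belongs to $F_{USCG}(X)$ since $S_n$ is finite.

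Finally I would verify $H_{\rm end}(u_n, u) \to 0$. The inclusion $S_n \subseteq S$ gives ${\rm end}\, u_n \subseteq {\rm end}\, u$, so $H^*({\rm end}\, u_n, {\rm end}\, u) = 0$. For the other half-distance, given $\varepsilon > 0$ pick $N$ with $d(x_k, x_j) < \varepsilon$ for all $k, j \geq N$; then for any $n \geq N$ and any $(x,t) \in {\rm end}\, u$, either $t = 0$ or $x = x_k$ for some $k$. If $t = 0$ or $k \leq n$, then $(x,t) \in {\rm end}\, u_n$ already; otherwise $k > n \geq N$, in which case $(x_n, t) \in {\rm end}\, u_n$ and $\overline{d}((x_k,t),(x_n,t)) = d(x_k, x_n) < \varepsilon$. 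Hence $H^*({\rm end}\, u, {\rm end}\, u_n) \leq \varepsilon$ for all $n \geq N$, so $H_{\rm end}(u_n, u) \to 0$, contradicting closedness of $F_{USCG}(X)$. The main delicacy is the casework in this final estimate — the set ${\rm end}\, u$ must be split into the regions where $t=0$, $x \in S_n$, and $x \in S \setminus S_n$ so that the Cauchy estimate is applied only to the third — while the forward direction is essentially the principle that a closed totally bounded subset of a complete metric space is compact, applied cut-wise.
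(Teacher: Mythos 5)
Your proof is correct, and the forward direction takes a genuinely different route from the paper's. The paper deduces necessity from its compactness machinery for fuzzy set spaces: a convergent (hence Cauchy, hence totally bounded) sequence $\{u_n\}$ in $F_{USCG}(X)$ has $\bigcup_n [u_n]_\al$ totally bounded for each $\al\in(0,1]$ (Lemma \ref{tbfegnum}), hence relatively compact when $X$ is complete, so by Theorem \ref{rcfegnum} the sequence is relatively compact in $(F_{USCG}(X), H_{\rm end})$ and a subsequential limit $v\in F_{USCG}(X)$ must coincide with $u$. You instead work cut-wise and directly: $[u]_\al$ is closed by upper semi-continuity, and a single nearby $u_N$ transfers a finite $\varepsilon/2$-net of the compact cut $[u_N]_{\al-\varepsilon/2}$ into an $\varepsilon$-net of $[u]_\al$, so $[u]_\al$ is totally bounded and hence compact. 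Your argument is more elementary and self-contained (it does not invoke the relative-compactness characterization at all), at the cost of not reusing results the paper needs anyway; the paper's route gets necessity almost for free once Theorems \ref{tbfegnum} and \ref{rcfegnum} are in place. For sufficiency you use the same construction as the paper ($S_n=\{x_1,\ldots,x_n\}$ and $S=\{x_n:n\in\mathbb{N}\}$ for a non-convergent Cauchy sequence), but you supply two details the paper leaves implicit: the verification that $S$ is closed, which is genuinely needed for $\chi_S\in F_{USC}(X)$, and the explicit two-sided Hausdorff estimate showing $H_{\rm end}(\chi_{S_n},\chi_S)\to 0$. Both directions check out.
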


\begin{proof}
\textbf{\emph{Necessity}}. Suppose that $(X,d)$ is complete. To show that $F_{USCG} (X)$ is a closed set in $(F_{USC} (X), H_{\rm end})$,
  let $\{u_n\}$ be a sequence in $F_{USCG} (X)$ which converges to an element $u$ in $(F_{USC} (X), H_{\rm end})$.
It suffices to show that
 $u \in F_{USCG} (X)$.

For $\al \in (0,1]$, set $U(\al) : = \bigcup_{n=1}^{+\infty} [u_n]_\al$.
As the Cauchy sequence $\{u_n\}$ is obviously a totally bounded set in $(F_{USCG} (X), H_{\rm end})$,
by Lemma \ref{tbfegnum}, $U(\al)$
is totally bounded in $(X,d)$ for each $\al \in (0,1]$.
Since $X$ is complete, it follows that
 $U(\al)$ is relatively compact in $X$ for each $\al \in (0,1]$.
Then
by Theorem \ref{rcfegnum},
 $\{u_n\}$ is a relatively compact set in $(F_{USCG} (X), H_{\rm end})$.
Hence
 $\{u_n\}$ has a subsequence $\{u_{n_k}\}$
which converges to $v\in F_{USCG} (X)$ according to $H_{\rm end}$ metric.
Then
$u=v$ and thus $u\in  F_{USCG} (X)$.

\textbf{\emph{Sufficiency}}.
Suppose that $F_{USCG} (X)$ is a closed set in $(F_{USC} (X), H_{\rm end})$.
To show that
$ (X,d)$ is complete, we proceed by contradiction.
If this were not the case, then there is a Cauchy sequence $\{x_n\}$ in $X$ which is not a convergent sequence in $X$.
Set $S= \bigcup_{k=1}^{+\infty} \{x_k\}$. For $n=1,2,\ldots$, set $S_n = \bigcup_{k=1}^n \{x_k\}  $.
Then $\{ {S_n}_{F(X)} \}$ is a sequence in $F_{USCG} (X)$ and
$$H_{\rm end} ({S_n}_{F(X)}, S_{F(X)}) =\min\{   H({S_n}, S),\ 1 \} \to 0    \mbox{ as }    n\to \infty.$$
Clearly $S$ is not a compact set in $X$, this means that $S_{F(X)} \notin F_{USCG} (X)$.
So $F_{USCG} (X)$ is not closed in
 $(F_{USC} (X), H_{\rm end})$.

\end{proof}

\begin{pp}\label{spcm}

Let $\{x_n\}$ be a sequence in $X$. If $\{\widehat{x_n}\}$ converges to a fuzzy set $u$ in $F_{USC} (X)$
according to the $H_{\rm end}$ metric,
then
there is an $x\in X$ such that $u=\widehat{x}$ and $d(x_n, x)\to 0$ as $n\to\infty$.

\end{pp}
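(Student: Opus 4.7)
The plan is to reduce the $H_{\rm end}$-convergence to Kuratowski convergence of endographs via Theorem \ref{hkg} (together with Remark \ref{hmr}), and then exploit the rigid structure of the endograph of a crisp point, namely
\[
{\rm end}\, \widehat{x_n} \;=\; (\{x_n\}\times[0,1]) \,\cup\, (X\times\{0\}),
\]
to show that $u$ is supported on a single point which is the limit of $\{x_n\}$ in $(X,d)$.

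First, I would establish the following key claim: if $(y,t)\in {\rm end}\, u$ with $t>0$, then $x_n\to y$ in $(X,d)$. By the $\liminf$ inclusion coming from ${\rm end}\, u=\lim_{n\to\infty}^{(K)}{\rm end}\, \widehat{x_n}$, there exist $(y_n,t_n)\in {\rm end}\, \widehat{x_n}$ with $(y_n,t_n)\to(y,t)$ under $\overline{d}$. Since $t_n\to t>0$, for all sufficiently large $n$ we have $t_n>0$, and the displayed structure of ${\rm end}\, \widehat{x_n}$ then forces $y_n=x_n$. Hence $x_n\to y$ in $(X,d)$.

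Now Proposition \ref{sem} applied to the obvious identity $S_{\widehat{x_n}}=1$ yields $S_u=1$, so there exists $y\in X$ with $u(y)>0$; the key claim then gives $x_n\to y$ in $(X,d)$. Uniqueness of limits in the metric space $(X,d)$ forces any other $z\in X$ with $u(z)>0$ to coincide with $y$, so $u$ vanishes off $\{y\}$. Moreover, since $x_n\to y$, for every $t\in[0,1]$ the points $(x_n,t)\in{\rm end}\, \widehat{x_n}$ converge to $(y,t)$, whence $(y,t)\in \liminf_{n\to\infty}{\rm end}\, \widehat{x_n}\subseteq {\rm end}\, u$ and therefore $u(y)\geq t$. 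Thus $u(y)=1$ and $u=\widehat{y}$; taking $x:=y$ completes the proof. The main obstacle is the step that produces a point $y\in X$ with $u(y)>0$: without the positivity of $S_u$, the Cauchy sequence $\{x_n\}$ could in principle only converge in a completion of $X$, and its supposed limit would not lie in $X$. Proposition \ref{sem} is precisely what prevents this and keeps the limit inside $X$.
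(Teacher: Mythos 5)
Your proof is correct, but it takes a genuinely different route from the paper's. The paper stays entirely inside the Hausdorff-metric framework: using Proposition \ref{sem} it first notes $S_u=1$, then shows each cut $[u]_\al$, $\al\in(0,1)$, is a singleton by a direct lower bound --- if $p\neq q$ both lie in $[u]_{\al_1}$ then $H_{\rm end}(u,\widehat{x})\geq\min\{\tfrac12 d(p,q),\al_1\}$ for \emph{every} $x\in X$, contradicting $H_{\rm end}(u,\widehat{x_n})\to 0$ --- concludes $u=\widehat{x}$ from the nestedness of these singletons, and only then reads off $d(x_n,x)\to 0$ from the identity $H_{\rm end}(\widehat{x_n},\widehat{x})=\min\{d(x_n,x),1\}$. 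You instead pass to Kuratowski convergence of endographs via Theorem \ref{hkg}/Remark \ref{hmr} and exploit the rigid form ${\rm end}\,\widehat{x_n}=(\{x_n\}\times[0,1])\cup(X\times\{0\})$: any point of ${\rm end}\,u$ at positive height must be approximated by points $(x_n,t_n)$ with $t_n>0$, which immediately yields $x_n\to y$, and the form $u=\widehat{y}$ then follows from uniqueness of limits plus the $\liminf$ inclusion at height $1$. Your route is arguably slicker in that the convergence $x_n\to y$ comes for free from the $\liminf$ half of the Kuratowski limit, whereas the paper's singleton argument is a self-contained metric estimate that does not need the set-convergence machinery at all; both arguments rely on Proposition \ref{sem} in exactly the same way, namely to guarantee a point $y$ with $u(y)>0$ and thereby keep the limit inside $X$ rather than in its completion.
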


\begin{proof}

From Proposition \ref{sem}, $u\in F^{'1}_{USC}(X)$. So $[u]_\al\not=\emptyset$ for all
$\al\in [0,1)$.
We claim that
\\
(\romannumeral1) \ $[u]_\al$ is a singleton for all
$\al\in (0,1)$, and
\\
(\romannumeral2) \ there is an $x$ in $X$ such that $u=\widehat{x}$.

Assume that (\romannumeral1) is not true.
Then there is an $\al_1\in (0,1)$ such that $[u]_{\al_1}$ has at least two distinct elements.
Pick $p,q$ in $[u]_{\al_1}$ with $p \not= q$.

Let $x\in X$. Since $d(p,x) + d(q,x) \geq d(p,q)$, it follows that
$\max\{d(p,x), d(q,x)\} \geq \frac{1}{2} d(p,q)$.
Hence
\begin{align}\label{rsgn}
H_{\rm end}(u, \widehat{x})& \geq H^*({\rm end}\,u, {\rm end}\,\widehat{x}) \nonumber\\
 & \geq \min\{H^* ([u]_{\al_1}, \{x\}), {\al_1}\} \nonumber\\
 & \geq \min\{\max\{d(p,x), d(q,x)\}, {\al_1}\} \nonumber\\
 & \geq \min\{\frac{1}{2} d(p,q), {\al_1}\}.
\end{align}
From \eqref{rsgn} we have $H_{\rm end}(u, \widehat{x_n}) \geq \min\{\frac{1}{2} d(p,q), {\al_1}\}$ for all $n\in \mathbb{N}$.
This contradicts the fact that
$H_{\rm end}(u, \widehat{x_n}) \to 0$ as $n\to\infty$.
So $[u]_\al$ is a singleton for all
$\al\in (0,1)$; that is, (\romannumeral1) is true.

To show (\romannumeral2), we first show that $[u]_\al = [u]_\beta$ for all $\al, \beta$ in $(0,1)$.
Otherwise there is a pair $\al_0, \beta_0$ such that
 $0<\al_0<\beta_0<1$ and $[u]_{\al_0} \supsetneqq [u]_{\beta_0}$. This contradicts
 the fact that both $[u]_{\al_0}$ and $[u]_{\beta_0}$ are singletons.

Thus there is an $x$ in $X$ such that $[u]_\al = \{x\}$ for all
$\al\in (0,1)$.
Hence $[u]_0=[u]_1 = \{x\}$.
This means that $u=\widehat{x}$. So (\romannumeral2) is true.

From (\romannumeral2), $H_{\rm end} (\widehat{x_n}, u) = H_{\rm end} (\widehat{x_n}, \widehat{x}) =\min \{d(x_n, x), 1\}$.
Since $H_{\rm end} (\widehat{x_n}, u) \to 0$ as $n\to\infty$,
we have that $d(x_n, x) \to 0$ as $n\to\infty$.

\end{proof}

\begin{tm} \label{cum} The following statements are equivalent:
\\
  (\romannumeral1) \ $(X,d)$ is complete;
\\
   (\romannumeral2) \ $(F_{USCG} (X), H_{\rm end})$ is complete;
\\
  (\romannumeral3) \ $(F_{USC} (X), H_{\rm end})$ is complete.
\end{tm}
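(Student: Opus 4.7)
The plan is to establish the cycle by proving (i)$\Leftrightarrow$(ii) and (i)$\Leftrightarrow$(iii) separately, since the two non-trivial forward implications from (i) use different machinery (the compact-cut representation in the $F_{USCG}$ case, and the endograph embedding in the general $F_{USC}$ case), while the two reverse implications to (i) share a single argument.

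For (ii)$\Rightarrow$(i) and (iii)$\Rightarrow$(i), I would use the point embedding $x\mapsto \widehat{x}$. Given a Cauchy sequence $\{x_n\}$ in $X$, the identity $H_{\rm end}(\widehat{x_n},\widehat{x_m})=\min\{d(x_n,x_m),1\}$ (combined with the fact that each $\widehat{x_n}$ lies in $F_{USCG}(X)\subseteq F_{USC}(X)$) makes $\{\widehat{x_n}\}$ Cauchy in whichever space is assumed complete. The limit $u$ it produces is then a fuzzy set in $F_{USC}(X)$ that, by Proposition~\ref{spcm}, must equal $\widehat{x}$ for some $x\in X$ with $x_n\to x$. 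Thus $(X,d)$ is complete, giving both reverse implications at once.

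For (i)$\Rightarrow$(ii), I would adapt the necessity argument already carried out in Theorem~\ref{cmu}. Let $\{u_n\}$ be Cauchy in $(F_{USCG}(X),H_{\rm end})$; being Cauchy it is totally bounded, so Lemma~\ref{tbfegnum} forces $\{u_n\}(\alpha)=\bigcup_n [u_n]_\alpha$ to be totally bounded in $(X,d)$ for every $\alpha\in(0,1]$. Completeness of $X$ upgrades each of these to relatively compact, and Theorem~\ref{rcfegnum} then gives that $\{u_n\}$ itself is relatively compact in $(F_{USCG}(X),H_{\rm end})$. A Cauchy sequence with a convergent subsequence converges, so $\{u_n\}$ has a limit in $F_{USCG}(X)$.

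The substantive new content is (i)$\Rightarrow$(iii). Here I would exploit the isometric embedding $g:(F_{USC}(X),H_{\rm end})\to(C(X\times[0,1]),H)$ given by $g(u)={\rm end}\,u$, together with Proposition~\ref{usce} which says $F_{USC}(X)^e=g(F_{USC}(X))$ is closed in $(C(X\times[0,1]),H)$. Since $X$ is complete, $X\times[0,1]$ is complete under $\overline{d}$, and the classical fact that the space of nonempty closed subsets of a complete metric space is complete under the Hausdorff extended metric gives completeness of $(C(X\times[0,1]),H)$. A closed subset of a complete extended metric space is itself complete, so $(F_{USC}(X)^e,H)$ is complete, and transporting back along the isometry $g$ yields completeness of $(F_{USC}(X),H_{\rm end})$.

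The main obstacle is the appeal to completeness of $(C(X\times[0,1]),H)$ as an extended metric space, since this paper takes care to distinguish extended from ordinary Hausdorff metrics. If a self-contained argument is preferred, I would instead, given a Cauchy sequence $\{{\rm end}\,u_n\}$, construct the candidate limit explicitly as the Kuratowski upper limit $U=\bigcap_{n\geq 1}\overline{\bigcup_{m\geq n}{\rm end}\,u_m}$, then verify $H({\rm end}\,u_n,U)\to 0$ by using the Cauchy property to produce $\varepsilon$-close approximants and completeness of $X\times[0,1]$ to extract a limit point in $U$; Proposition~\ref{usce} would then certify $U\in F_{USC}(X)^e$ so that $U={\rm end}\,u$ for some $u\in F_{USC}(X)$.
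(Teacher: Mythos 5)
Your proposal is correct and follows essentially the same route as the paper: the reverse implications via the point embedding $x\mapsto\widehat{x}$ and Proposition \ref{spcm}, and (i)$\Rightarrow$(iii) via the isometry onto $F_{USC}(X)^e$, completeness of $(C(X\times[0,1]),H)$, and the closedness result Proposition \ref{usce}. The only cosmetic difference is that the paper obtains (ii) from (iii) by citing Theorem \ref{cmu} (closedness of $F_{USCG}(X)$ in $(F_{USC}(X),H_{\rm end})$), whereas you prove (i)$\Rightarrow$(ii) directly by inlining the same total-boundedness/relative-compactness argument (Lemma \ref{tbfegnum} and Theorem \ref{rcfegnum}) that underlies the necessity part of Theorem \ref{cmu}.
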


\begin{proof}

  (\romannumeral3)$\Rightarrow$(\romannumeral1).
 Suppose that $(F_{USC} (X), H_{\rm end})$ is complete. To show that $(X,d)$ is complete,
 let
 $\{x_n\}$ be a Cauchy sequence in $(X,d)$, we only need to show that
there is an $x$ in $X$ such that $d(x_n, x) \to 0$.

Note that $H_{\rm end} (\widehat{x},\,  \widehat{y}) = \min\{d(x,y),1\}$ for $x,y \in X$.
Then
$\{\widehat{x_n}\}$ is a Cauchy sequence in $(F_{USC} (X), H_{\rm end})$,
and
therefore $\{\widehat{x_n}\}$ converges to a fuzzy set $u$ in $F_{USC} (X)$ according to the $H_{\rm end}$ metric.
Thus by Proposition \ref{spcm}, there exists an $x\in X$
such
that $d(x_n,x)\to 0$.
So $(X,d)$ is complete.

  (\romannumeral1)$\Rightarrow$(\romannumeral3). Suppose that $(X,d)$ is complete.
  Then
   $(X\times[0,1], \overline{d})$ is complete, and
   therefore $(C(X\times[0,1]), H)$ is complete.
By Proposition \ref{usce}, $(F_{USC} (X), H_{\rm end})$ is complete.

    (\romannumeral3)$\Rightarrow$(\romannumeral2).
  Suppose that $(F_{USC} (X), H_{\rm end})$ is complete. Then, as
    (\romannumeral3)$\Rightarrow$(\romannumeral1) is true,  $(X,d)$ is complete.
  So by Theorem \ref{cmu},
   $(F_{USCG} (X)$ is closed in
  $(F_{USC} (X), H_{\rm end})$. Thus      $(F_{USCG} (X), H_{\rm end})$ is complete.

  (\romannumeral2)$\Rightarrow$(\romannumeral1). The proof is similar to that of (\romannumeral3)$\Rightarrow$(\romannumeral1).

  Suppose that $(F_{USCG} (X), H_{\rm end})$ is complete.
Let
 $\{x_n\}$ be a Cauchy sequence in $(X,d)$.
Then
$\{\widehat{x_n}\}$ is a Cauchy sequence in $(F_{USCG} (X), H_{\rm end})$,
and
therefore $\{\widehat{x_n}\}$ converges to a fuzzy $u$ in $F_{USCG} (X)$ according to the $H_{\rm end}$ metric.
Thus by Proposition \ref{spcm}, there exists an $x\in X$
such
that $d(x_n,x)\to 0$.
So $(X,d)$ is complete.

  Since we have shown that  (\romannumeral1)$\Rightarrow$(\romannumeral3),
   (\romannumeral3)$\Rightarrow$(\romannumeral2),
   and
     (\romannumeral2)$\Rightarrow$(\romannumeral1). The proof is completed.

\end{proof}

\section{Conclusions}

We discuss the compatibility of
 the endograph metric and the $\Gamma$-convergence
 on
  fuzzy sets in $\mathbb{R}^m$. The results in this paper significantly improve the corresponding results in
\cite{huang}.

It is known that the Fell topology is compatible with the Kuratowski convergence on $ C(\mathbb{R}^m)\cup \{\emptyset\}$.
So
the results in this paper indicate that Fell topology can be metrizable by the endograph metric
on a large class
 of fuzzy sets in $\mathbb{R}^m$.

The results in this paper have potential applications in the research
of
fuzzy sets involved the endograph metric and the $\Gamma$-convergence.

\end{document}